\title{On the dimension spectrum of infinite subsystems of continued fractions}
\address{University of Connecticut, Department of Mathematics}
\address{University of North Texas, Department of Mathematics}
\subjclass[2010]{37D35, 28A80, 11K50, 11J70 (Primary),  37B10, 37C30, 37C40,  (Secondary)}
\author{Vasileios Chousionis}
\author{Dmitriy Leykekhman}
\author{Mariusz Urba\'nski}
\thanks{V.C. is supported by  the Simons Foundation via the project `Analysis and dynamics in Carnot groups', Collaboration grant no.\  521845. D.L.  is supported by the NSF grant no.\ DMS-1522555.   
}
\email{vasileios.chousionis@uconn.edu}
\email{dmitriy.leykekhman@uconn.edu}
\email{urbanski@unt.edu}
\newcommand{\ve}{\varepsilon}
\newcommand{\f}{\phi}
\newcommand{\R}{\mathbb{R}}
\newcommand{\N}{\mathbb{N}}
\newcommand{\Q}{\mathbb{Q}}
\renewcommand{\a}{{\alpha}}
\newcommand{\om}{{\omega}}
\newcommand{\cS}{\mathcal{S}}
\newcommand{\cf}{\mathcal{CF}}
\newcommand{\cH}{\mathcal{H}}
\newcommand{\sg}{\sigma}
\newcommand{\diam}{\operatorname{diam}}
\newcommand{\err}{\operatorname{err}}
\newcommand{\corr}{\operatorname{corr}}
\def\Int{\text{{\rm Int}}}
\def\om{\omega}
\newcommand{\stm}{\setminus}
\newcommand{\ra}{\rightarrow}
\numberwithin{equation}{section}
\newtheorem{thm}{Theorem}[section]
\newtheorem{theorem}{Theorem}[section]
\newtheorem{lm}[thm]{Lemma}
\newtheorem{corollary}[thm]{Corollary}
\theoremstyle{definition}
\newtheorem{propo}[thm]{Proposition}
\theoremstyle{definition}
\newtheorem{defn}[thm]{Definition}
\theoremstyle{definition}
\theoremstyle{definition}
\newtheorem{rem}[thm]{Remark}
\newtheorem{remark}[thm]{Remark}
\begin{document}
\begin{abstract} In this paper we study the dimension spectrum of continued fractions with coefficients restricted to infinite subsets of natural numbers. We prove that if $E$ is any  arithmetic progression, the set of primes, or the set of squares $\{n^2\}_{n \in \N}$, then the continued fractions whose digits lie in $E$ have full dimension spectrum, which we denote by $DS(\cf_E)$. Moreover we prove that if $E$ is an infinite set of consecutive powers then the dimension spectrum $DS(\cf_E)$ always contains a non trivial interval. We also show that there exists some $E \subset \N$ and two non-trivial intervals $I_1, I_2$, such that $DS(\cf_E) \cap I_1=I_1$  and $DS(\cf_E) \cap I_2$ is a Cantor set. On the way we employ the computational approach of Falk and Nussbaum in order to obtain rigorous effective estimates for the Hausdorff dimension of continued fractions  whose entries are restricted to infinite sets.
\end{abstract}

\maketitle 
\section{Introduction}
Given any $E \subset \N$, frequently called an {\em alphabet}, we will denote by $J_E$ the collection of irrationals $x \in (0,1)$ whose continued fraction expansion
$$[e_1(x),e_2(x),\dots]:=\cfrac{1}{e_1(x)+\cfrac{1}{e_2(x)+\cfrac{1}{e_3(x)+\dots}}},$$
has all its coefficients $e_i (x)$ in the set $E$. These naturally defined fractals have been attracting significant attention since the classical paper of Jarnik \cite{jar}, and many authors have investigated their metric and geometric properties \cite{bu1, bu2, cu1, cu2, cu3, FalkRS_NussbaumRD_2016a, hen1, hen2, hentex, jenktexan, JenkinsonO_PollicottM_2001, JenkinsonO_PollicottM_2018, good, MU2, UZ}. We also record that dimension estimates for the sets $J_{E}$ frequently appear in the context of Diophantine approximation; for example they have been employed in the study of the Markoff and Lagrange spectra \cite{bu1, bu2, cufla}, and in a recent important contribution related to Zaremba's conjecture \cite{zar}.

In this paper we will be concerned with the {\em dimension spectrum of continued fractions}. More precisely if $E \subset \N$, the dimension spectrum of continued fractions with coefficients in $E$ is defined as
$$DS(\cf_E)=\{\dim_{\cH}(J_F):F \subset E\},$$
where $\dim_{\cH}(J_F)$ denotes the Hausdorff dimension of the set $J_F$.  Note that when $E$ is finite the dimension spectrum $DS(\cf_E)$ is also finite. However when $E$ is infinite several interesting questions arise concerning the size and structure of $DS(\cf_E)$. Regarding the topological properties of $DS(\cf_E)$, we recently proved in \cite{ChousionisV_LeykekhmanD_UrbanskiM_2018a} that it is always compact and perfect.

If the alphabet $E=\N$ the dimension spectrum $DS(\cf_\N)$ is well understood. Notice, that in this case the limit set $J_\N$ is simply $(0,1) \stm \Q$. In the mid 90s, Hensley \cite{hentex}, and independently Mauldin and the last named author \cite{MU2}, conjectured that the dimension spectrum $DS(\cf_{\N})$ is full, that is $$DS(\cf_{\N})=[0, \dim_{\cH}(J_{\N})]=[0,1].$$
This conjecture is frequently referred to as the {\em Texan conjecture}, since Hensley and the authors of \cite{MU2} were affiliated with Texan institutions at that time. In 2006, Kesseb\"ohmer and Zhu \cite{KessebohmerM_ZhuS_2006} gave a positive answer to the Texan conjecture, and developed several influential tools for the study of the dimension spectrum in a more general context. See also \cite{MU1, jenktexan} for some earlier partial results.

Beside the case when $E=\N$, the dimension spectrum of continued fractions with digits restricted to infinite subsets $E \subset \N$ has not been investigated. In this paper, for the first time, we study the dimension spectrum $DS(\cf_E)$ for various sets $E \subset \N$. We will prove that even when $E$ is arbitrarily sparse the dimension spectrum $DS(\cf_{E})$ can be full. Our first main result reads as follows.
\begin{thm}
\label{progrespc}
Let $E \subset \N$ be any infinite arithmetic progression. Then
$$DS(\cf_{E})=[0, \dim_{\cH}(J_E)].$$
\end{thm}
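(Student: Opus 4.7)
The strategy follows the thermodynamic-formalism template of Kesseb\"ohmer--Zhu \cite{KessebohmerM_ZhuS_2006} that resolved the Texan conjecture, adapted to the restricted-digit setting. The plan is to work in the Mauldin--Urba\'nski framework of infinite conformal iterated function systems, with $\phi_n(x) = 1/(n+x)$ for $n \in \N$; the limit set of the subsystem indexed by any $F \subset \N$ is exactly $J_F$. By Bowen's formula, $\dim_{\cH}(J_F)$ equals the unique zero of the topological pressure
$$P_F(s) \;:=\; \lim_{m \to \infty} \frac{1}{m} \log \sum_{\omega \in F^m} \|\phi_\omega'\|_\infty^s,$$
which on its domain of finiteness is continuous and strictly decreasing in $s$. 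Two companion facts will be invoked repeatedly: the finite-approximation identity $\dim_{\cH}(J_F) = \sup\{\dim_{\cH}(J_G) : G \subset F,\ |G| < \infty\}$, and the consequent nested-union formula $\dim_{\cH}(J_{\bigcup_k F_k}) = \sup_k \dim_{\cH}(J_{F_k})$ whenever $F_1 \subset F_2 \subset \cdots$, which holds because every finite subset of the union lies in some $F_k$.

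The core technical ingredient will be a quantitative small-perturbation lemma: for every $F \subsetneq E$ and every $\varepsilon > 0$ there exists $N = N(F,\varepsilon)$ such that for every $n \in E$ with $n \geq N$,
$$0 \;<\; \dim_{\cH}(J_{F \cup \{n\}}) - \dim_{\cH}(J_F) \;<\; \varepsilon.$$
Strict positivity follows from strict monotonicity of Bowen dimension. For the upper bound I plan a transfer-operator perturbation argument: since $\|\phi_n'\|_\infty \asymp n^{-2}$, adjoining the single branch $\phi_n$ shifts the Perron--Frobenius operator at parameter $s = \dim_{\cH}(J_F)$ by an operator of norm $O(n^{-2s})$, hence shifts $P_{F \cup \{n\}}(\dim_{\cH}(J_F))$ by $O(n^{-2s})$; implicit differentiation of Bowen's equation, together with a lower bound $|P_F'(\dim_{\cH}(J_F))| \geq c_F > 0$ coming from bounded distortion and positivity of the Lyapunov exponent, converts this vertical perturbation into an $O(n^{-2s})$ horizontal shift of the zero. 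The arithmetic-progression hypothesis enters only through the availability of arbitrarily large $n \in E$.

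Given these tools, for a target $t \in (0, \dim_{\cH}(J_E))$ I would build a nested sequence of finite subsets $F_1 \subsetneq F_2 \subsetneq \cdots$ of $E$ with $\dim_{\cH}(J_{F_k}) \nearrow t$; the set $F := \bigcup_k F_k$ then satisfies $\dim_{\cH}(J_F) = t$ by the nested-union formula. The inductive step takes $F_k$ with $t_k := \dim_{\cH}(J_{F_k}) < t$ and produces $F_{k+1} \supsetneq F_k$ finite with $\dim_{\cH}(J_{F_{k+1}}) \in (t - 2^{-k}, t)$ by interleaving two moves: appending a single large element of $E$ to nudge the dimension upward by a small controlled amount (small-perturbation with $\varepsilon = 2^{-k}$), and, when the dimension must be pushed past the ``tail floor'' $\lim_{N \to \infty} \dim_{\cH}(J_{E \cap [N,\infty)})$, supplementing with moderate-sized digits drawn from a finite subset of $E$ of dimension close to $\dim_{\cH}(J_E)$ supplied by finite approximation. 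The endpoints $t = 0$ and $t = \dim_{\cH}(J_E)$ are immediate: any singleton has Hausdorff dimension $0$, and $F = E$ realises $\dim_{\cH}(J_E)$.

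The main obstacle is ensuring that the inductive step always lands in the target window $(t - 2^{-k}, t)$. A naive greedy strategy appending only large digits can stall strictly below $t$, because the tail dimension $\dim_{\cH}(J_{E \cap [N,\infty)})$ decreases as $N \to \infty$ to the Poincar\'e exponent $1/2$, so for targets $t > 1/2$ one must at some stage include moderate digits, whose dimension jumps are larger and more delicate to control. Taming these jumps is precisely where a uniform, effective version of the small-perturbation lemma --- with quantitative control on $|P_F'(\dim_{\cH}(J_F))|$ valid across every $F$ encountered during the construction --- is needed, and this is where the Falk--Nussbaum computational framework alluded to in the abstract enters the picture. Once this uniform small-perturbation lemma is in hand, the inductive construction runs to completion and the desired $F \subset E$ with $\dim_{\cH}(J_F) = t$ is obtained.
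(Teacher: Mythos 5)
Your proposal correctly places the problem in the Mauldin--Urba\'nski thermodynamic-formalism framework and correctly identifies the ``tail floor'' as the basic obstacle, but the mechanism you propose --- an iterative small-perturbation argument --- is not the paper's argument and, as sketched, has a genuine gap. The small-perturbation lemma (adjoining a single far-out digit shifts the dimension by $O(n^{-2s})$) is a one-sided, local estimate: it tells you jumps from large digits are small, but it does not show that the greedy nudging can climb to an arbitrary target $t$ without stalling strictly below it. What is missing is precisely the Kesseb\"ohmer--Zhu ``tail replacement'' criterion, encapsulated in the paper's Propositions \ref{keypropo} and \ref{keypropo2} (distilling \cite[Lemma~4.3]{KessebohmerM_ZhuS_2006}): at the relevant exponent $s$ one verifies that for all $k$ past an explicit threshold
$$\sum_{n=k+1}^\infty \|\phi'_{e_n}\|_\infty^s \;\geq\; K_E^{2s}\,\|\phi'_{e_k}\|_\infty^s,$$
i.e.\ deleting $e_k$ from a prospective digit set and adding the entire remaining tail $\{e_{k+1},e_{k+2},\dots\}$ recovers at least as much pressure. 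This global comparison is what prevents the construction from undershooting, and it is exactly the step your ``interleaving moderate digits'' heuristic would need to make rigorous. It is verified analytically in the paper via integral tests together with the explicit distortion and Lyapunov bounds in Propositions \ref{distco} and \ref{lyapbound}, not by a perturbation estimate.

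Two further mismatches with the paper's argument. First, the sum condition typically only holds for $k\ge k_0$ with $k_0>2$, so the criterion initially covers only $[\dim_{\cH}(J_{I(k_0)}),\dim_{\cH}(J_E)]$; the paper then runs a bootstrap, re-applying the criterion at a smaller exponent and invoking \cite[Theorem~6.3]{MU2} to cover $[0,1/2)$, and reduces general arithmetic progressions to the case $s\le q$ via Proposition \ref{bijpropo}. Second, you have mislocated the role of the Falk--Nussbaum machinery: in the paper it supplies rigorous numerical bounds on $\dim_{\cH}(J_E)$ and on a few small finite subsystems needed to start the bootstrap, whereas uniform control of the pressure derivative across all subsystems comes from the elementary analytic estimates in Propositions \ref{distco} and \ref{lyapbound}, not from Falk--Nussbaum computations.
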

Moreover we will show that the continued fractions whose partial quotients are prime numbers have full dimension spectrum. 
\begin{thm}
\label{primes}Let $E$ be the set of primes. Then
$$DS(\cf_{E})=[0, \dim_{\cH}(J_E)].$$
\end{thm}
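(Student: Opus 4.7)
The plan is to adapt the Kesseb\"ohmer-Zhu resolution of the Texan conjecture for $E = \N$ to the sparser prime alphabet, working within the Mauldin-Urba\'nski thermodynamic formalism for the infinite conformal iterated function system $\{\phi_e(x) = 1/(e+x)\}_{e \in E}$: Bowen's formula identifies $\dim_{\cH}(J_F)$ with the unique zero of the topological pressure $P_F(t)$, valid as soon as this zero exceeds the finiteness parameter $\theta_F$. Since $|\phi_e'| \asymp e^{-2}$ and $\sum_{p \text{ prime}} p^{-2t} < \infty$ iff $t > 1/2$, the finiteness parameter of the prime system satisfies $\theta_E = 1/2$, matching the case $E = \N$. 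By our earlier result in \cite{ChousionisV_LeykekhmanD_UrbanskiM_2018a} that $DS(\cf_E)$ is compact, it is enough to prove the spectrum is dense in $[0, \dim_{\cH}(J_E)]$.

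Given a target $s \in [0, \dim_{\cH}(J_E)]$, I would construct a subset $F \subset E$ with $\dim_{\cH}(J_F)$ arbitrarily close to $s$ by a greedy induction on the enumeration $p_1 < p_2 < \ldots$ of primes. At stage $k$, include $p_k$ in the running set $F_{k-1}$ precisely when doing so keeps the running dimension below $s + \epsilon_k$, for a preselected summable sequence $\epsilon_k \to 0$. Monotone continuity of Hausdorff dimension under IFS inclusions gives $\dim_{\cH}(J_{F_\infty}) \leq s$; the small-step lemma discussed below forces equality. For targets $s$ above $\theta_E = 1/2$, the procedure must be initialized with a long initial segment $E \cap [1, N]$ chosen so that $\dim_{\cH}(J_{E \cap [1, N]})$ is already close to $s$, which is permissible because $\dim_{\cH}(J_{E \cap [1, N]}) \nearrow \dim_{\cH}(J_E)$ as $N \to \infty$; further fine-tuning is then performed on primes beyond $N$.

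The key technical input is a small-step lemma: for any $F \subset E$ and $\delta > 0$ there is $M$ such that every prime $p \geq M$ with $p \notin F$ satisfies $\dim_{\cH}(J_{F \cup \{p\}}) - \dim_{\cH}(J_F) < \delta$. This follows by applying the implicit function theorem to Bowen's equation $P_F(t) = 0$: the perturbation of $P_F(t)$ induced by adjoining the single branch $\phi_p$ is of order $p^{-2t}$, while $|P_F'|$ at the Bowen zero is uniformly bounded away from zero by standard thermodynamic estimates. The resulting control on the shift in the zero gives the required dimension estimate.

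The main obstacle is establishing the small-step lemma uniformly as the target $s$ approaches the critical threshold $\theta_E = 1/2$, where $P_E(t)$ diverges as $t \searrow 1/2$ because $\sum_p 1/p = \infty$ by Mertens' theorem. Here the slow divergence must be quantified carefully and combined with uniform Koebe distortion estimates to bound the sensitivity of the Bowen zero under single-digit perturbations. Once this is in place, the only number-theoretic input needed for Theorem \ref{primes} is the infinitude of the primes together with the divergence of $\sum 1/p$ (the latter ensuring $\dim_{\cH}(J_E) > 1/2$ so that the interval $[\theta_E, \dim_{\cH}(J_E)]$ is nontrivial); the proof of Theorem \ref{progrespc} should proceed in parallel, with the positive density of arithmetic progressions replacing Mertens' theorem.
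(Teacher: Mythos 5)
Your plan correctly identifies the overall strategy: adapt the Kesseb\"ohmer--Zhu ``build $F$ by a greedy induction'' philosophy to the prime alphabet within the thermodynamic-formalism framework, and use compactness of $DS(\cf_E)$ from \cite{ChousionisV_LeykekhmanD_UrbanskiM_2018a} to reduce to denseness. However, there is a genuine gap in the proposed key lemma, and the number-theoretic input is substantially underestimated.

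The ``small-step lemma'' you isolate -- that for fixed $F$ and $\delta>0$ one has $\dim_{\cH}(J_{F\cup\{p\}})-\dim_{\cH}(J_F)<\delta$ for all large $p$ -- is true and follows from the implicit function theorem as you say, with $|P_F'(h_F)|=\chi(\cS_F)$ bounded below uniformly in $F$ by Proposition \ref{prop: Lyapunov}. But this is \emph{not} what makes the greedy construction converge. The danger is not that a single step is too large, but that the greedy algorithm ``runs out of mass'': after excluding some $p_k$ because it would overshoot $s+\ve_k$, the entire remaining tail $\{p_{k+1},p_{k+2},\dots\}$ may be unable to carry the dimension back up to $s$. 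The small-step lemma says nothing about this. What is actually needed -- and what Kesseb\"ohmer--Zhu's Lemma~4.3, packaged here as Proposition~\ref{keypropo2}/Corollary~\ref{keycoro2}, provides -- is a \emph{tail-sum} condition: for all $k\ge 2$,
$$
\sum_{n=k+1}^{\infty}\|\phi'_{p_n}\|_{\infty}^{s}\ \ge\ K_E^{2s}\,\|\phi'_{p_k}\|_{\infty}^{s},
$$
i.e.\ the cumulative contribution of all primes beyond $p_k$ dominates the distortion-corrected contribution of $p_k$ itself. This guarantees that removing any single $p_k$ from a running set can always be compensated from the tail, so the greedy dimension does reach $s$. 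The single-digit perturbation estimate is strictly weaker and does not force equality in the limit.

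Two further issues. First, the tail-sum inequality is a genuinely quantitative statement about the primes and is \emph{not} a consequence of ``the infinitude of primes plus divergence of $\sum 1/p$.'' The paper verifies it using Dusart's explicit two-sided bounds $n\ln n\le p(n)\le n(\ln n+\ln\ln n)$, together with the explicit distortion constant $K_{E_{prime}}=\exp(2/3)$ from Proposition~\ref{prop: distortion}. Mertens' theorem would only control sums near the critical exponent $t=1/2$ and is not the obstruction here. Second, verifying the tail-sum condition at a parameter $s$ yields $[0,\min\{s,\dim_{\cH}(J_E)\}]\subset DS(\cf_E)$; to conclude fullness you must have $s\ge\dim_{\cH}(J_{E_{prime}})$. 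Your proposal has no mechanism for producing such an $s$. The paper obtains the rigorous numerical upper bound $\dim_{\cH}(J_{E_{prime}})<0.6752$ via the Falk--Nussbaum scheme and then checks the tail condition at $s=0.6752$, with the inequality verified analytically for $k\ge 9$ and by direct computation for $k=2,\dots,8$. Without this numerical estimate, the argument cannot close.
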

And the same holds true for continued fractions whose digits are squares.
\begin{thm}
\label{squares}Let $E$ be  the set of squares $\{n^2\}_{n \in \N}$. Then
$$DS(\cf_{E})=[0, \dim_{\cH}(J_E)].$$
\end{thm}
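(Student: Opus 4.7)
The plan is to prove Theorem~\ref{squares} by paralleling the arguments for Theorems~\ref{progrespc} and \ref{primes}, specialized to the sparse alphabet $E = \{n^2\}_{n \in \N}$. I would work in the Mauldin--Urba\'nski framework: for $F \subset E$, the set $J_F$ is the limit set of the conformal iterated function system $\Phi_F = \{\phi_m\}_{m \in F}$ with $\phi_m(x) = 1/(m+x)$, and the Hausdorff dimension $s_F := \dim_\cH(J_F)$ is characterized by Bowen's formula as the zero of the topological pressure $P_F(s)$. Since \cite{ChousionisV_LeykekhmanD_UrbanskiM_2018a} already shows that $DS(\cf_E)$ is compact, the goal reduces to proving density of $\{s_F : F \subset E\}$ in $[0, s_E]$.

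Two ingredients will drive the construction. First, I would establish vanishing tails: writing $E_N = \{k^2 : k \geq N\}$, the uniform estimate $\|\phi_{k^2}'\|_\infty \leq k^{-4}$ yields $P_{E_N}(s) \leq \log \sum_{k \geq N} k^{-4s} \to -\infty$ for every $s > 1/4$, hence $s_{E_N} \to 0$. Second, I would prove a controlled-perturbation lemma: the implicit function theorem applied to $P_F(s_F) = 0$ gives, for $n^2 \notin F$, the bound $s_{F \cup \{n^2\}} - s_F \lesssim \|\phi_{n^2}'\|_\infty^{s_F} / |\partial_s P_{F \cup \{n^2\}}(s_F)| \lesssim n^{-4 s_F}$, which vanishes as $n \to \infty$, and symmetrically for single-element removals. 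Given a target $s \in (0, s_E)$ and tolerance $\epsilon > 0$, I would then choose $N$ large enough that $s_{E_N} < s$ and that every single-element jump at scales $\geq N$ is at most $\epsilon$. Scanning the finite family $F_T := E_N \cup T$ over $T \subset \{1, 4, \dots, (N-1)^2\}$, whose extremes $F_\emptyset = E_N$ and $F_{T_0} = E$ (for $T_0 = \{1, 4, \dots, (N-1)^2\}$) straddle $s$, I would locate $T^*$ with $s_{F_{T^*}}$ maximal subject to $s_{F_{T^*}} \leq s$, and finally fine-tune by adjoining or removing individual tail elements of $E_N$, each of which perturbs dimension by at most $\epsilon$, to arrive at $F \subset E$ with $|s_F - s| < \epsilon$.

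The hard part will be this fine-tuning stage. Unlike the arithmetic-progression case of Theorem~\ref{progrespc}, squares are so sparse that adjoining a small square like $\phi_4$ or $\phi_9$ produces a non-negligible dimension jump, so the fine adjustment must occur entirely through tail elements, and I would need to verify that such tail adjustments are always available and cumulatively powerful enough to close any residual gap. I expect this to follow from a careful bookkeeping argument combining the two ingredients above, supported by the rigorous quantitative pressure estimates in the spirit of Falk--Nussbaum (cf.\ \cite{FalkRS_NussbaumRD_2016a}) that the paper develops. Once density of $\{s_F : F \subset E\}$ in $[0, s_E]$ is established, the compactness of $DS(\cf_E)$ from \cite{ChousionisV_LeykekhmanD_UrbanskiM_2018a} completes the argument.
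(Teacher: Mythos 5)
Your plan differs from the paper's route and contains two genuine gaps. The first is in the vanishing-tails step: the claim $s_{E_N} \to 0$ is false. Since $\|\phi'_{k^2}\|_\infty = k^{-4}$, the finiteness parameter $\theta_{\cf_{E_N}}$ equals $1/4$ for every $N$, and Bowen's parameter always satisfies $s_{E_N} = \dim_{\cH}(J_{E_N}) \geq \theta_{\cf_{E_N}} = 1/4$. Your pressure estimate shows only that $s_{E_N} \to 1/4$, so there is no $N$ with $s_{E_N} < s$ when $s \leq 1/4$, and that range would need a separate argument (the paper gets it automatically from Corollary~\ref{keycoro}, and \cite[Theorem 6.3]{MU2} alone gives $[0,1/4) \subset DS(\cf_E)$).

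The second and decisive gap is the fine-tuning stage, which you correctly flag as the hard part but then defer to bookkeeping. Having selected $F_{T^*} = E_N \cup T^*$, you must at some point adjoin a small square $m^2 \notin T^*$, which by your choice of $T^*$ pushes the dimension above $s$, and then walk the dimension back down by deleting tail elements one at a time. Each deletion perturbs the dimension by less than $\epsilon$, but nothing in your two stated ingredients guarantees that the deletions ever bring it below $s$: deleting all of $E_N$ leaves the finite set $T^* \cup \{m^2\}$, whose dimension may still exceed $s$. What is missing is a comparison between the \emph{cumulative} contribution of the tail and a single-element jump, and this is precisely the Kesseb\"ohmer--Zhu condition $\sum_{n>k} \a_{e_n}(s) \geq \beta_{e_k}(s)$ encapsulated in Proposition~\ref{keypropo} and Corollary~\ref{keycoro}. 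The paper's proof of Theorem~\ref{squares} never scans subsets; it verifies this inequality directly for the squares alphabet, first at an upper bound $s_0 \geq \dim_{\cH}(J_E)$ with $k\ge 3$, which combined with a numerically rigorous estimate of $\dim_{\cH}(J_{\{1,4\}})$ yields $[0.411184, \dim_{\cH}(J_E)] \subset DS(\cf_E)$, and then bootstraps once more at $s_1 = 0.411184$ with $k\ge 2$ to cover $[0, s_1]$. Implicit-function-theorem perturbation bounds and Falk--Nussbaum dimension estimates do not by themselves supply this cumulative tail-versus-single comparison, so your proposal as written does not establish the theorem.
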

One can then naturally ask how the dimension spectrum is affected when the size of the gaps in $E$ increases exponentially. Our next theorem asserts that when the alphabet consists of consecutive powers, i.e.  when $E=\{\lambda^n\}_{n \in \N}$ for some $\lambda \in \N, \lambda \geq 2$, its dimension spectrum always contains a non-trivial interval.
\begin{thm}
\label{powerintro} Let $\lambda \in \N, \lambda \geq 2,$ and let $E_{\lambda}=\{\lambda^n\}_{n \in \N}.$ Then there exists some $s(\lambda)>0$ such that 
$$[0, \max\{s(\lambda), \dim_{\cH}(J_{E_{\lambda}})\}] \subset DS(\cf_{E_{\lambda}}).$$
\end{thm}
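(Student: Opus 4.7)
The plan is to construct, for every target $t$ in a nontrivial interval $[0,s(\lambda)]$, an infinite subsystem $F\subset E_\lambda$ with $\dim_{\cH}J_F=t$, and then invoke the trivial containment $\dim_{\cH}J_{E_\lambda}\in DS(\cf_{E_\lambda})$. Since $DS(\cf_{E_\lambda})$ is closed by \cite{ChousionisV_LeykekhmanD_UrbanskiM_2018a}, it suffices to exhibit subsystems of $E_\lambda$ whose Hausdorff dimensions are dense in $[0,s(\lambda)]$. The construction is an adaptation of the Kesseb\"ohmer--Zhu scheme underlying the Texan conjecture: begin with tail subsystems of $E_\lambda$, whose dimensions accumulate at $0$, then perform a ``staircase'' by re-inserting individual digits $\lambda^m$ while quantifying how each insertion perturbs the dimension.

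For each $N\in\N$, set $F_N:=\{\lambda^n:n\geq N\}$. Using the Bowen-type pressure characterization of $\dim_{\cH}J_{F_N}$ for conformal IFS and the asymptotic $|T_{\lambda^n}'(x)|\asymp\lambda^{-2n}$ for the continued-fraction maps $T_a(x)=1/(a+x)$, the relevant pressure equation is controlled by the geometric series $\sum_{n\geq N}\lambda^{-2ns}$, from which one extracts $\dim_{\cH}J_{F_N}\asymp(N\log\lambda)^{-1}$ and hence $\dim_{\cH}J_{F_N}\downarrow 0$. To fill the gap between $\dim_{\cH}J_{F_{N+1}}$ and $\dim_{\cH}J_{F_N}$, consider subsystems $G=F_{N+1}\cup A$ for finite $A\subset\{\lambda^m:m>N\}$. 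Re-inserting a single digit $\lambda^m$ perturbs the pressure $P_G(s)$ by an amount of order $\lambda^{-2ms}$, and the implicit function theorem applied to $P_G(s)=0$ translates this into a dimension change of order $\lambda^{-2ms}/|\partial_s P_G(s)|$. For $m$ large the jump is arbitrarily small, so incremental re-insertions realise Hausdorff dimensions dense in $[\dim_{\cH}J_{F_{N+1}},\dim_{\cH}J_{F_N}]$. Concatenating over $N\geq N_0$ produces spectrum points dense in $[0,\dim_{\cH}J_{F_{N_0}}]$, and we then set $s(\lambda):=\dim_{\cH}J_{F_{N_0}}$.

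The main obstacle is a uniform lower bound of the form $|\partial_s P_F(s)|\geq c>0$, valid for every subsystem $F\subset E_\lambda$ appearing in the staircase and uniformly in $s$ across the relevant range. This bound is exactly what converts pressure perturbations into dimension perturbations, and without it the small-jump step collapses. For the polynomially dense alphabets of Theorems \ref{progrespc}--\ref{squares} the uniformity is essentially automatic, but for the lacunary alphabet $E_\lambda$ it becomes delicate as $s$ approaches $\dim_{\cH}J_{E_\lambda}$, where the contributions of the smallest digits $\lambda,\lambda^2,\ldots$ cannot be treated as small perturbations. We would certify the bound rigorously on the range $s\leq s(\lambda)$ using the Falk--Nussbaum computational scheme developed elsewhere in the paper, which supplies rigorous effective estimates for both the pressure and its derivative. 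It is precisely this quantitative step that forces the threshold $s(\lambda)$ to appear in the statement in place of the full Hausdorff dimension $\dim_{\cH}J_{E_\lambda}$.
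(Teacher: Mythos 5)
Your overall template --- tail subsystems $F_N=\{\lambda^n:n\geq N\}$ with dimensions accumulating at zero, plus a Kesseb\"ohmer--Zhu style staircase --- is the same circle of ideas the paper deploys, but you misidentify the key obstruction and therefore the estimate that actually makes the staircase run. The paper proves this via Theorem \ref{powerint} and Corollary \ref{keycoro2}: one verifies the tail-sum inequality $\sum_{n>k}\|\f'_{e_n}\|_\infty^s\geq K_E^{2s}\|\f'_{e_k}\|_\infty^s$ for all $k\geq 2$. For $E_\lambda$ this becomes $\sum_{n>k}\lambda^{-2ns}\geq K^{2s}\lambda^{-2ks}$, and an integral test reduces it to the single inequality $\frac{1}{2s\ln\lambda}\geq(\lambda K)^{2s}$, which is \emph{independent of} $k$. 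The threshold $s(\lambda)$ is then simply the unique root of the decreasing function $f(s)=\frac{1}{2s\ln\lambda}-(\lambda K)^{2s}$, and no numerical work is involved anywhere in this argument.

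Your proposal instead locates the difficulty in a uniform lower bound $|\partial_s P_F(s)|\geq c>0$, to be ``certified rigorously'' via the Falk--Nussbaum scheme. That is the wrong bottleneck. The pressure derivative at the Bowen parameter is, up to normalization, the negative of the Lyapunov exponent, and Proposition \ref{lyapbound} already supplies the uniform bound $\chi_F\geq 2\ln\bigl(\frac{\lambda+\sqrt{\lambda^2+4}}{2}\bigr)\geq 2\ln(1+\sqrt 2)$ for every $F\subset E_\lambda$, since $\min F\geq\lambda\geq 2$; your denominator is never in danger. What fails for a lacunary alphabet is the \emph{numerator}: the perturbation from inserting a new digit is not small compared with the tail it must compete against, which is exactly what the ratio-of-tails condition in Corollary \ref{keycoro2} quantifies and what fails once $s>s(\lambda)$. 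Consequently your claim that ``incremental re-insertions realise Hausdorff dimensions dense in $[\dim_{\cH}J_{F_{N+1}},\dim_{\cH}J_{F_N}]$'' is not automatic --- that density is precisely the content of the tail-sum estimate, and without it the staircase leaves gaps, which is exactly the phenomenon Theorems \ref{50100intro} and \ref{lacspec} are about. Finally, the Falk--Nussbaum method certifies Hausdorff dimensions, not pressure derivatives, so it would not deliver the bound you describe; the paper's proof of this particular theorem is free of machine assistance. A smaller slip worth flagging: your set $A\subset\{\lambda^m:m>N\}$ is already contained in $F_{N+1}$, so $G=F_{N+1}\cup A=F_{N+1}$; any genuinely new digit has index at most $N$, which only makes the perturbation larger, not smaller.
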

We currently do not know if $E_{\lambda}$ defines continued fractions with full spectrum. We are inclined to believe that this is not the case, because as our next theorem shows, there exist continued fractions  with alphabets consisting of scaled powers whose dimension spectrum is not full. Interestingly enough, their spectrum also contains a nowhere dense part.
\begin{thm}
\label{50100intro} 
Let $E=\{2\cdot 100^{n-1}\}_{n \in \N}$. Then there exist $ 0<s_1<s_2<\dim_{\cH}(J_{E}),$ such that
\begin{enumerate}[label=(\roman*)]
\item \label{inter} $[0,s_1] \subset DS(\cf_{E})$,
\item \label{nowden} $[s_2, \dim_{\cH}(J_{E})] \cap DS(\cf_{E})$ is nowhere dense.
\end{enumerate}
\end{thm}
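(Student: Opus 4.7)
The plan is to exploit a Bernoulli-type dichotomy that the very lacunary alphabet $E=\{a_n\}_{n\in\N}$, $a_n=2\cdot 100^{n-1}$, makes visible. By bounded distortion for the conformal IFS $\phi_a(x)=1/(a+x)$, the Hausdorff dimension of $J_F$ for $F\subset E$ is, up to a controlled multiplicative error independent of $F$, the solution $s$ of the Moran-type equation $\sum_{a\in F}a^{-2s}=1$, which in our case takes the form
\[4^{-s}\sum_{n:\,a_n\in F}r^{n-1}\;\approx\;1,\qquad r=r(s):=100^{-2s}.\]
The sum on the left is a Bernoulli series in ratio $r$: as $F$ varies over subsets of $E$ its range is an interval when $r\geq 1/2$ and a Cantor set when $r<1/2$, with the transition occurring at $s^*:=\log 2/\log 10000\approx 0.0753$. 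A crude estimate of $\dim_\cH(J_E)$ from the two-element subsystem $\{2,200\}$ places $\dim_\cH(J_E)$ well above $s^*$; the rigorous certification is where the Falk--Nussbaum apparatus from the abstract will be invoked.

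For part (i) I would fix $s_1$ slightly below $s^*$, so that $r(s_1)>1/2$, and construct subsystems of dimensions filling $[0,s_1]$ by adapting the interval-building strategies of \cite{KessebohmerM_ZhuS_2006} and \cite{ChousionisV_LeykekhmanD_UrbanskiM_2018a}. Starting from a deep tail $F_N:=\{a_n:n\geq N\}$, whose dimension can be made arbitrarily small, one adds digits of $E$ one at a time; in the regime $r(s_1)>1/2$ each single-digit increment of $\dim_\cH(J_F)$ is dominated by the cumulative contribution of the digits still to be added, so an intermediate-value argument realises every $t\in[0,s_1]$ as the Hausdorff dimension of some subsystem. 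Closedness of $DS(\cf_E)$ from \cite{ChousionisV_LeykekhmanD_UrbanskiM_2018a} then delivers the full interval.

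For part (ii) I would choose $s_2$ slightly above $s^*$, where $r(s_2)<1/2$. The key estimate is a separation inequality: there exists $c=c(s_2)>0$ such that for every $F\subset E$ with $\dim_\cH(J_F)\geq s_2$ and every index $n$, toggling the membership of $a_n$ in $F$ shifts $\dim_\cH(J_F)$ by an amount strictly greater than the supremum of all shifts achievable by any further modification restricted to $\{a_m:m>n\}$. This is exactly the Bernoulli-separation condition $r(s_2)<1/2$ translated through the pressure formalism. Iterating the inequality level by level produces a dense family of forbidden open intervals inside $[s_2,\dim_\cH(J_E)]$, which renders $[s_2,\dim_\cH(J_E)]\cap DS(\cf_E)$ nowhere dense.

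The chief obstacle, and the reason rigorous numerics become unavoidable, is transferring the clean Bernoulli dichotomy from the formal Moran sum $\sum r^{n-1}$ to the exact pressure of the continued-fraction IFS while keeping distortion errors uniform in the subsystem $F$. Concretely, I would apply Falk--Nussbaum discretisations of the transfer operator to certify simultaneously a lower bound $\dim_\cH(J_E)>s^*$ large enough to admit an $s_2$ in the Cantor regime, and that the cylinder distortion constants of the maps $\phi_{a_n}$ stay within a margin strictly smaller than the Bernoulli gap $1-2r(s_2)$. I expect this uniformity, across the uncountable family of subsystems of $E$, to be the main technical hurdle.
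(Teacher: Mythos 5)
Your underlying intuition is exactly right and in fact captures the heart of the paper's argument: the paper's interval criterion (Proposition \ref{keypropo2} / Corollary \ref{keycoro2}, descended from Kesseb\"ohmer--Zhu) and its nowhere-density criterion (Theorem \ref{kessenode}) are, once specialized to the geometric alphabet $E=\{2\cdot100^{n-1}\}$, precisely the two sides of the Bernoulli threshold $\sum_{n>k}r^n \gtrless r^k$ with $r=100^{-2s}$, which flips at $s^*=\log 2/\log 10000$. The paper estimates $\dim_{\cH}(J_E)\approx 0.1604$ via Falk--Nussbaum, then verifies the two criteria explicitly.

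However, there is a genuine gap in your quantitative plan that would make it fail as written. You propose to choose $s_1$ \emph{slightly below} and $s_2$ \emph{slightly above} $s^*\approx 0.0753$, and you acknowledge that the distortion constants must ``stay within a margin strictly smaller than the Bernoulli gap $1-2r(s_2)$.'' But that margin tends to $0$ as $s_2\downarrow s^*$, while the distortion constant of $\cf_E$ is a fixed number bounded well away from $1$: by Proposition \ref{prop: distortion} with $k=2$, $K_E\le \exp(2/3)\approx 1.95$, and this is essentially sharp for the map $\f_2$. So there is no choice of $s_2$ ``slightly above'' $s^*$ for which your separation inequality can absorb the distortion. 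The threshold has to be displaced by an amount comparable to $s^*$ itself: the paper's actual admissible values are $s_1\approx 0.0586$ and $s_2\approx 0.1289$, bracketing $s^*$ by a gap of width $\approx 0.07$. (Concretely, the interval criterion becomes $\tfrac{1}{2s\ln 100}\ge (100K)^{2s}$ and the nowhere-density criterion becomes $\tfrac{1}{2t\ln 100}<K^{-2t}$ after the integral test; it is $K$ in these exponents that pushes $s_1$ and $s_2$ apart.) You also misplace the role of the Falk--Nussbaum machinery: it is not used to control distortion uniformly over subsystems --- that is done once and for all by the explicit bound of Proposition \ref{prop: distortion}, which is automatically uniform in $F\subset E$ --- but only to compute $\dim_{\cH}(J_E)$ accurately enough to certify $s_2<\dim_{\cH}(J_E)$. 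Once you replace ``slightly'' by the concrete distortion-corrected thresholds and separate the two numerical tasks, your outline lines up with the paper's proof.
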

Since  $DS(\cf_E)$ is always compact and perfect \cite{ChousionisV_LeykekhmanD_UrbanskiM_2018a}, we deduce that if $E$ is as in the previous theorem, then there exists an interval $I$ such $DS(\cf_E) \cap I$ is a Cantor set. Moreover we can prove that if $E$ is a lacunary sequence then there exists an interval $I$ such that $DS(\cf_E) \cap I$ is a Cantor set, see Theorem \ref{lacspec}. It becomes evident that if the alphabet $E$ has gaps whose size increases exponentially (or faster), then the dimension spectrum has a very intriguing structure. Our methods do not seem to be sufficient for fully describing the dimension spectrum of continued fractions with such alphabets and we consider that new ideas will be needed for such task.

Our proofs employ the machinery which we developed recently in \cite{ChousionisV_LeykekhmanD_UrbanskiM_2018a} in order to show that complex continued fractions have full spectrum, as well as some key ideas of Kesseb\"ohmer and Zhu from \cite{KessebohmerM_ZhuS_2006}. All the techniques used, depend on a well known remarkable feature of the sets $J_E$; they can be realized as attractors of iterated function systems consisting of conformal maps; see Section \ref{sec:prelim} for more details.  This allows us to take advantage of the technology of {\em thermodynamic formalism} developed in \cite{MU1, MauldinRD_UrbanskiM_2003a}. More precisely,  topological pressure and Perron-Frobenius operators are used extensively in our approach.   

In addition, we depend crucially on Hausdorff dimension estimates for the sets $J_E$. 
We stress that the topic of estimating the Hausdorff dimension of the sets $J_E$ has attracted significant attention, see e.g.  \cite{jar, bu1, bu2, good, FalkRS_NussbaumRD_2016a, hen1, JenkinsonO_PollicottM_2018, good, HeinemannSM_UrbanskiM_2002a}. In particular when the alphabet $E$ is finite and rather small, very sharp estimates are known.  For example, quite recently Jenkinson and Pollicott  \cite{JenkinsonO_PollicottM_2018} were able to approximate the Hausdorff dimension of $J_{\{1,2\}}$ with an accuracy of over $100$ decimal places.

 In the present paper we adopt the computational approach of Falk and Nussbaum  \cite{FalkRS_NussbaumRD_2016a, FalkRS_NussbaumRD_2016b} and we obtain rigorous Hausdorff dimension estimates for continued fractions with infinite alphabets. We consider that these estimates  are interesting on their own right, and as mentioned earlier they also play a crucial for the study of the dimension spectrum. To the best of our knowledge, the only previous attempt of estimating the Hausdorff dimension of continued fractions with infinite alphabet was in \cite{HeinemannSM_UrbanskiM_2002a}, were the case of $J_{2 \N}$ was considered. We estimate the Hausdorff dimension of the sets  $J_E$ for various infinite alphabets $E$, such as the odds, the evens (improving the estimate from \cite{HeinemannSM_UrbanskiM_2002a}), the primes, the squares, the powers of $2$, etc. We gather some of these estimates in Table \ref{tableintro}. These estimates can definitely be improved with heavier computations. 
\begin{table}[ht]
\label{tableintro}
\caption{Hausdorff dimension estimates for various sets $J_E$}
\centering
\vspace{.1in}
\begin{tabular}{|l|c|c|c|c|}
\hline
Subsystem           & Hausdorff dim. interval & size of the intervals  \\
\hline
$E_{odd}=\{1,3,5,\dots\}$ & $[0.821160, 0.821177]$  & $1.7e-05$ \\
\hline
$E_{even}=\{2,4,6,\dots\}$ & $[0.71936, 0.71950]$ & $1.4e-04$ \\
\hline
$E_{1mod3}=\{1,4,7,\dots\}$ & $[0.743520, 0.743586]$ & $6.6e-05$\\
\hline
$E_{2mod3}=\{2,5,8,\dots\}$ & $[0.66490, 0.66546]$ & $5.6e-04$ \\
\hline
$E_{0mod3}=\{3,6,9,\dots\}$ & $[0.63956, 0.64073]$ & $1.2e-03$\\
\hline
$E_{prime}=\{2,3,5,\dots\}$ & $[0.67507, 0.67519]$ & $1.2e-04 $ \\
\hline
$E_{square}=\{1,2^2,3^2,\dots\}$ & $[0.59825575, 0.59825579]$ & $4.0e-08 $ \\
\hline
$E_{power2}=\{2,2^2,2^3,\dots\}$ & $[0.4720715327, 0.4720715331]$ & $4.0e-10$ \\
\hline
$E_{power3}=\{3,3^2,3^3,\dots\}$ & $[0.3105296859, 0.3105296860]$ & $ 1.0e-10$ \\
\hline
$E_{lac}=\{2,2^{2^2},2^{3^2},\dots\}$ & $[0.2362689121, 0.2362689123]$ & $ 2.0e-10$  \\
\hline
\end{tabular}
\label{table: Dim estimates}
\end{table} 

The paper is organized as follows. In Section \ref{sec:prelim} we introduce all the relevant concepts related to conformal iterated function and their thermodynamic formalism. In Section \ref{sec:dim} we establish rigorous Hausdorff dimension estimates for continued fractions whose entries are restricted to infinite sets using the approach of Falk and Nussbaum; and we also include a careful description of the Falk-Nussbaum method for convenience of the reader. Finally in Section \ref{sec:spec} we prove Theorems \ref{progrespc}, \ref{primes}, \ref{squares}, \ref{powerintro}, and \ref{50100intro}.

\textbf{Acknowledgements.} We would like to thank De-Jun Feng for some useful discussions during the preparation of this paper.

\section{Preliminary results}\label{sec:prelim}

Let $(X,d)$ be a compact 
metric space and let $E$ be a countable set with at least two elements. Let 
$\cS=\{\f_e:X\to X:e\in  
E\}$ be a  
collection of uniformly contracting injective maps. Any such 
collection $\cS$  is called an {\em iterated function system} (IFS). We denote by $E^*=\cup_{n\ge
1}E^n$ the set of finite words from $E$. For any word $\om\in E^n$, $n\ge 1$, we set
$$
\f_\om=\f_{\om_1}\circ\f_{\om_2}\circ\dots\circ \f_{\om_n}.
$$
If $\om\in E^*\cup E^\N$ and $n\in \N$ does not exceed the length of $\om$, 
we denote by $\om|_n$ the word $\om_1\om_2\dots\om_n$. Since $\cS$ consists of uniformly contracting maps, there exists some $s \in (0,1)$ such that
$$d(\f_e(x), \f_e(y)) \leq s d(x,y)$$
for every $e \in E$ and every pair $x, y \in X$. Observe that if
$\om\in E^\N$ then
$$
\diam(\f_{\om|_n}(X))\le s^n\diam(X). 
$$Hence  $\{\f_{\om|_n}(X)\}_{n \in \N}$ is a decreasing sequence of compact sets whose diameters converge to zero. Therefore 
\begin{equation}
\label{natproj}
\pi(\om)=\bigcap_{n=1}^\infty\f_{\om|_n}(X)
\end{equation}
is a singleton and \eqref{natproj} defines a coding map $\pi:E^\N\to X$. The set 
$$J_{\cS}=\pi(E^\N)=\bigcup_{\om\in E^\N}\bigcap_{n=1}^\infty\f_{\om|n}(X),$$ is called 
the {\em limit set} (or {\em attractor}) associated to the system $\cS$. 

Let
$
\sg: E^\mathbb{N} \ra E^\mathbb{N}
$
be the  shift map,  which  is given by the
formula
$$
\sg\left( (\om_n)^\infty_{n=1}  \right) =  \left( (\om_{n+1})^\infty_{n=1}  \right).
$$
Note that the shift map simply discards the first coordinate. We record  that for any $e \in E$ and any $\om \in E^\N$,
$\f_e(\pi(\om))=\pi(e\om)$ and
$\pi(\om)=\f_{\om_1}(\pi(\sg(\om)))$. Hence the limit set $J_{\cS}$ is {\em invariant} for $\cS$, that is
$$
J_{\cS}=\bigcup_{i\in I}\f_i(J_\cS). 
$$
Notice also that if $E$ is finite, then $J$ is compact. 

\begin{defn}
\label{cifs}
An iterated function system $\cS=\{\f_e:X\to X:e\in  
E\}$ is called
conformal (CIFS) if $X$ is a compact connected subset of the Euclidean space 
${\Bbb R}^d$ and the following conditions are satisfied.
\begin{enumerate}[label=(\roman*)]
\item \label{cfi} $X=\overline{\Int (X)}$
\item ({\em Open set condition} or OSC) For all $a,b \in E, a \neq b,$
$$\f_a (\Int(X)) \cap \f_b(\Int(X))=\emptyset.$$
\item There exists an open connected set $X\subset V\subset {\Bbb R}^d$ such that
all maps $\f_e$, $e\in E$, extend to $C^{1+\ve}$ diffeomorphisms on $V$ and are conformal on $V$.  
\item \label{cfbdp} ({\em Bounded Distortion Property} or BDP) There exists $K\ge 1$ 
such that $$|\f_\om'(y)|\le K|\f_\om'(x)|$$ for every $\om\in E^*$ and every 
pair of points $x,y\in X$, where $|\f_\om'(x)|$ denotes the norm of the derivative. 
\end{enumerate}
\end{defn}
We will denote the {\em best distortion constant} of $\cS$ by
$$K_{\cS}=\sup_{\om \in E^\ast} \left\{\max_{x,y \in X} \frac{|\f_\om'(y)|}{|\f_\om'(x)|}.\right\}$$
For $t\ge 0$ and $n \in \N$ we define
\begin{equation*}
Z_{n}(E,t):=Z_n(t) = \sum_{\om\in E^n} \|\phi'_\om\|^t_\infty,
\end{equation*}
where for any $\om \in E^*$ we use the notation
$\| \f'_\om\|_\infty := \max_{x \in X} |\f'_\om (x)|.$
It follows that
$
Z_{m+n}(t)\le Z_m(t)Z_n(t),
$
and consequently, the
sequence $(\ln Z_n(t))_{n=1}^\infty$ is subadditive. Thus, 
$$
\lim_{n \to  \infty}  \frac{ \ln Z_n(t)}{n}
=\inf_{n \in \N} \left(\frac{\ln Z_n(t)}{n}\right)<\infty.$$ The value of
the limit is denoted by $P(t)$ or, if we want to be more precise, by
$P_E(t)$ or $P_\mathcal{S}(t)$. It is called the {\em topological pressure} of the system $\mathcal{S}$ evaluated at the parameter $t$. The topological pressure provides an indispensable tool in the dimension study of the limit set of $\cS$. Before making the previous statement precise we summarize a few basic properties of the pressure function, whose proofs can be found in \cite{MU1}.
\begin{propo}\label{p2j85}
If $\cS$ is a conformal IFS then the
following conclusions hold.
\begin{enumerate}[label=(\roman*)]
\item \label{z1pfin}  $\{t\ge 0:Z_1(t)< +\infty\}=\{t\ge 0:P(t)< +\infty\}$.
\item \label{presscontconv} The topological pressure $P$ is
strictly  decreasing  on $[0,+\infty)$ with $P(t) \to -\infty$ as
$t\to+\infty$. Moreover, the function
$P$ is convex and  continuous on  $\overline{\{t\ge 0:Z_1(t)< +\infty\}}$.
\item \label{p0infty} $P(0)=+\infty$  if and only if $E$  is infinite.
\end{enumerate}
\end{propo}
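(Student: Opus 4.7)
My plan is to derive all three parts from the submultiplicativity $Z_{m+n}(t)\le Z_m(t)Z_n(t)$ already recorded in the excerpt, the Bounded Distortion Property (BDP), and the uniform contraction estimate $\|\phi'_e\|_\infty\le s<1$.

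For part (i), one direction is immediate from submultiplicativity: $P(t)\le\ln Z_1(t)$, so $Z_1(t)<+\infty$ implies $P(t)<+\infty$. For the converse I use the chain rule
$$\phi'_\om(x)=\prod_{i=1}^n\phi'_{\om_i}\bigl(\phi_{\om_{i+1}\cdots\om_n}(x)\bigr),$$
applying BDP to each single-letter factor to get $|\phi'_{\om_i}(y)|\ge K^{-1}\|\phi'_{\om_i}\|_\infty$. This yields $\|\phi'_\om\|_\infty\ge K^{-n}\prod_i\|\phi'_{\om_i}\|_\infty$ and therefore
$$Z_n(t)\ge K^{-nt}Z_1(t)^n,$$
forcing $Z_n(t)=+\infty$ for every $n$ as soon as $Z_1(t)=+\infty$, hence $P(t)=+\infty$.

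For part (ii), monotonicity and the limit $P(t)\to-\infty$ come from $\|\phi'_\om\|_\infty\le s^n$ for $\om\in E^n$: for $0\le t_1<t_2$ this gives $Z_n(t_2)\le s^{n(t_2-t_1)}Z_n(t_1)$, whence, after taking logs, dividing by $n$, and letting $n\to\infty$,
$$P(t_2)\le P(t_1)+(t_2-t_1)\ln s,$$
so $P$ is strictly decreasing on $D:=\{t\ge 0: P(t)<+\infty\}$ and $P(t)\to-\infty$ as $t\to+\infty$. Convexity of $t\mapsto\frac{1}{n}\ln Z_n(t)$ is H\"older's inequality
$$Z_n(\lambda t_1+(1-\lambda)t_2)\le Z_n(t_1)^\lambda Z_n(t_2)^{1-\lambda},$$
and it transfers to the pointwise limit $P$. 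Continuity of $P$ on the interior of $D$ is then automatic from convexity. At the left endpoint $\theta:=\inf D$ I split cases: if $\theta\in D$, the monotone convergence theorem (each $\|\phi'_\om\|_\infty^t$ is increasing as $t\downarrow\theta$, since $\|\phi'_\om\|_\infty<1$) gives $Z_n(t)\uparrow Z_n(\theta)$, and combined with monotonicity of $P$ and $P(t)\le\frac{1}{n}\ln Z_n(t)\to\frac{1}{n}\ln Z_n(\theta)$ for each $n$ one concludes $P(t)\to P(\theta)$; if $\theta\notin D$, so $P(\theta)=+\infty$, the BDP lower bound from part (i) yields $P(t)\ge -t\ln K+\ln Z_1(t)$, which, combined with $Z_1(t)\uparrow Z_1(\theta)=+\infty$, forces $P(t)\to+\infty$.

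Part (iii) is a direct computation: $Z_n(0)=\card(E)^n$ for every $n$, so $P(0)=\ln\card(E)$, finite exactly when $E$ is finite. I expect the main subtlety to be continuity of $P$ at the left endpoint $\theta$ in the case $P(\theta)=+\infty$: here convexity and one-sided monotonicity alone do not prevent $P(t)$ from staying bounded as $t\downarrow\theta$, and the BDP-based lower bound $Z_n(t)\ge K^{-nt}Z_1(t)^n$ is the key ingredient that propagates the divergence of $Z_1(t)$ into divergence of $P(t)$.
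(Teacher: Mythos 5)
Your parts (i) and (iii), and the monotonicity and convexity statements in (ii), are correct and use exactly the tools one would expect (submultiplicativity, H\"older, uniform contraction, BDP for the lower bound $Z_n(t)\ge K^{-nt}Z_1(t)^n$). The paper itself defers this proposition to \cite{MU1}, so there is no in-paper argument to compare against, but your route is the natural one.

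There is, however, a genuine gap in your continuity argument at the left endpoint $\theta:=\inf\{t:Z_1(t)<\infty\}$, and it sits precisely in the case you thought was the easy one. When $\theta\in D$ (that is, $P(\theta)<\infty$), you invoke monotone convergence to get $Z_n(t)\uparrow Z_n(\theta)$ and then combine the monotonicity of $P$ with $P(t)\le\frac{1}{n}\ln Z_n(t)$. But both of those facts push in the same direction: they give $\lim_{t\downarrow\theta}P(t)\le\inf_n\frac{1}{n}\ln Z_n(\theta)=P(\theta)$, which is already implied by $P$ being decreasing. What you actually need is the reverse inequality $\lim_{t\downarrow\theta}P(t)\ge P(\theta)$, i.e.\ lower semicontinuity of $P$ at $\theta$, and none of the ingredients you listed deliver it. Indeed $P$ is a pointwise infimum of continuous functions and hence only upper semicontinuous a priori; a decreasing convex upper semicontinuous function can jump down at the left endpoint of its domain (e.g.\ $f(\theta)=1$, $f(t)=0$ for $t>\theta$), so convexity plus monotonicity alone cannot close the argument.

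The missing ingredient is a \emph{super}multiplicativity estimate, which is within reach of the same BDP computation you already used in part (i): applying BDP once to the block decomposition $\phi_{\alpha\beta}=\phi_\alpha\circ\phi_\beta$ gives $\|\phi'_{\alpha\beta}\|_\infty\ge K^{-1}\|\phi'_\alpha\|_\infty\|\phi'_\beta\|_\infty$, hence
\[
Z_{m+n}(t)\;\ge\;K^{-t}\,Z_m(t)\,Z_n(t).
\]
The sequence $\ln Z_n(t)-t\ln K$ is therefore superadditive, and the superadditive form of Fekete's lemma yields
\[
P(t)\;=\;\sup_{n\ge 1}\left(\frac{1}{n}\ln Z_n(t)-\frac{t\ln K}{n}\right)\;\ge\;\frac{1}{n}\ln Z_n(t)-\frac{t\ln K}{n}\quad\text{for every }n.
\]
This expresses $P$ as a supremum of (right-)continuous functions, giving the lower semicontinuity you are missing: letting $t\downarrow\theta$ and using $Z_n(t)\uparrow Z_n(\theta)$ gives $\liminf_{t\downarrow\theta}P(t)\ge\frac{1}{n}\ln Z_n(\theta)-\frac{\theta\ln K}{n}$ for each $n$, and taking $\sup_n$ recovers $P(\theta)$. (The case $\theta\notin D$, which you flagged as the main subtlety, you actually handle correctly via $P(t)\ge -t\ln K+\ln Z_1(t)$.)

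Two smaller remarks. First, you prove strict monotonicity of $P$ only on $D$; the statement ``strictly decreasing on $[0,+\infty)$'' is fine on the set where $P$ is finite but vacuous on a possible initial segment where $P\equiv+\infty$ -- worth a sentence. Second, when you say convexity ``transfers to the pointwise limit $P$,'' be careful to say \emph{limit}, not \emph{infimum}: an infimum of convex functions is not convex in general, and it is Fekete's lemma that upgrades the infimum to a genuine pointwise limit.
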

It becomes evident that the pressure function has two critical parameters:
$$\theta_\cS:=\inf \{t\ge 0:P(t)< +\infty\}, \mbox{ and }
h_{\mathcal{S}} := \inf\{t\geq 0:  P(t)\leq 0\}.
$$
In particular $h_\cS$ is known as  {\it Bowen's parameter} and is related to the Hausdorff dimension of the limit set, as the following theorem asserts. We record that if $\cS=\{\f_e\}_{e \in E}$ is a conformal IFS and $F \subset E$ we denote the limit set of $\cS_{F}:=\{\f_e\}_{e \in F}$ by $J_F$.
\begin{thm}[\cite{MU1} ]\label{t1j97}
If $\cS=\{\f_e\}_{e \in E}$ is a conformal iterated function system, then
$$
h_\mathcal{S}
= \dim_{\mathcal{H}}(J_\mathcal{S})
= \sup \{\dim_\cH(J_F):  \, F \subset E \, \mbox{finite} \, \}.
$$
\end{thm}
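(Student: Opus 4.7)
The plan is to establish four assertions whose combination pins down all three quantities: (a) $\dim_{\cH}(J_{\cS})\le h_{\cS}$; (b) the trivial upper bound $\sup\{\dim_{\cH}(J_F):F\subset E\ \text{finite}\}\le \dim_{\cH}(J_{\cS})$; (c) $\dim_{\cH}(J_F)=h_{\cS_F}$ for every finite $F\subset E$; and (d) $\sup_F h_{\cS_F}\ge h_{\cS}$.

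For (a), fix $t>h_{\cS}$, so $P(t)<0$ by Proposition~\ref{p2j85}, and thus $Z_n(E,t)$ decays geometrically in $n$. Cover $J_{\cS}\subset \bigcup_{\om\in E^n}\f_\om(X)$; conformality combined with BDP produces a constant $C_0$ (depending only on $K_{\cS}$, $\diam(X)$, and the ambient dimension) such that $\diam(\f_\om(X))\le C_0\|\f'_\om\|_\infty$. Then $\sum_{\om\in E^n}\diam(\f_\om(X))^t\le C_0^t Z_n(E,t)\to 0$, so $\cH^t(J_{\cS})=0$, and letting $t\searrow h_{\cS}$ yields (a). Claim (b) is just monotonicity of Hausdorff dimension under $J_F\subset J_{\cS}$. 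For (c), the direction $\le$ is (a) applied to $\cS_F$, while the direction $\ge$ is the classical Bowen--Moran formula for finite CIFS satisfying OSC: one considers the Perron--Frobenius operator $\L_t g(x)=\sum_{e\in F}|\f'_e(x)|^t g(\f_e(x))$, whose spectral radius equals $1$ at $t=h_{\cS_F}$, producing a $t$-conformal eigenmeasure $m$ on $J_F$. BDP then converts cylinder estimates $m(\f_\om(X))\asymp\|\f'_\om\|_\infty^t$ into ball estimates $m(B(x,r))\le Cr^t$, and the mass distribution principle yields $\cH^t(J_F)>0$.

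The crux is (d). The inequality $h_{\cS_F}\le h_{\cS}$ is immediate from $Z_n(F,t)\le Z_n(E,t)$, which passes to $P_F\le P_E$, and hence to the Bowen parameters. For the reverse, fix any $t$ with $\theta_{\cS}\le t<h_{\cS}$, so that $P_E(t)>0$ by Proposition~\ref{p2j85}; pick $n$ so large that $Z_n(E,t)>K_{\cS}^{2t}$. Since $E^n$ is countable, there is a finite $W\subset E^n$ with $\sum_{\om\in W}\|\f'_\om\|_\infty^t>K_{\cS}^{2t}$. Let $F$ be the finite set of letters appearing in words of $W$; then $W\subset F^n$, so $Z_n(F,t)>K_{\cS}^{2t}$. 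A standard BDP calculation gives the supermultiplicativity
\begin{equation*}
Z_{p+q}(F,t)\ge K_{\cS}^{-t}\, Z_p(F,t)\, Z_q(F,t),
\end{equation*}
whence $b_n:=\ln Z_n(F,t)-t\ln K_{\cS}$ is superadditive and Fekete's lemma gives $P_F(t)=\sup_n\tfrac{b_n}{n}\ge \tfrac{b_n}{n}>0$, since $b_n>t\ln K_{\cS}\ge 0$. Hence $h_{\cS_F}>t$, and letting $t\nearrow h_{\cS}$ closes (d). The case $t<\theta_{\cS}$, where $Z_1(E,t)=+\infty$, is handled identically with $n=1$.

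The principal obstacle is the lower bound in (c): actually constructing the $t$-conformal measure on the attractor of a finite CIFS and running the mass distribution argument requires the full Perron--Frobenius machinery on the finite symbolic space. Once this and the BDP-based supermultiplicativity are in hand, step (d) is conceptually clean, and (a)--(d) chain together to give the two equalities.
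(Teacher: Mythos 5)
Your proposal is correct, and it follows the same route that Mauldin and Urba\'nski take in \cite{MU1} (the paper cites that reference without reproducing the argument): the upper bound $\dim_{\cH}(J_{\cS})\le h_{\cS}$ via covers by cylinders and the estimate $\diam(\f_\om(X))\lesssim\|\f'_\om\|_\infty$, Bowen's formula for finite subsystems via $t$-conformal measures, and the finite-approximation of the pressure obtained from the BDP-induced supermultiplicativity of $Z_n(F,t)$ together with Fekete's lemma. Your step~(d) is, in effect, a direct proof of the inequality $P_E(t)\le\sup_F P_F(t)$ in exactly the form needed, and it handles the $Z_1(E,t)=+\infty$ regime correctly.
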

When the conformal IFS is {\em strongly regular}, i.e. when there exists some $t  \geq 0$ such that $P(t) \in (0, +\infty)$, it is possible to obtain asymptotic information for the Hausdorff dimension of subsystems. The following theorem was originally obtained in \cite{HeinemannSM_UrbanskiM_2002a} and very recently it was generalized via a simplified proof in \cite{ChousionisV_LeykekhmanD_UrbanskiM_2018a}. Before stating it, we record that if $\cS=\{\f_e\}_{e \in E}$ is a conformal IFS the {\em characteristic Lyapunov exponent} of $\cS$ is
\begin{equation}
\label{lyap}
\chi(\cS)=-\int_{E^{\N}}\ln\|\phi'_{\om_1}(\pi(\sg(\om))\| \,d \tilde{\mu}_h(\om),
\end{equation}
where $\tilde{\mu}_h$ is the unique shift-invariant ergodic measure on $E^\N$, globally equivalent to the conformal measure of $E^\N$. We will only use the fact that $\tilde{\mu}_h$ is a probability measure, and for this reason we will not define it explicitly. For more information, we refer the reader to \cite{CTU, MauldinRD_UrbanskiM_2003a}. If $F \subset E$, we denote by $h_{F}$ the Bowen's parameter of the subsystem $\cS_{F}=\{\f_e\}_{e \in F}$. Note that by Theorem \ref{t1j97} it holds that $h_{F}=\dim_{\cH} (J_{F})$.
\begin{thm}[\cite{HeinemannSM_UrbanskiM_2002a, ChousionisV_LeykekhmanD_UrbanskiM_2018a}]
\label{hein}
Let $\cS=\{\f_e\}_{e \in E}$ be a  strongly regular conformal IFS. If $F\subset E$ is a finite set such that $h_{F} \geq \theta_{\cS}$, then 
$$
\dim_{\cH} (J_{\cS})-\dim_{\cH} (J_{F}) \leq  \frac{K^{h_F}} {\chi_{\cS}}\,\sum_{E \stm F} \, \|\f'_e\|^{h_F}_{\infty}.
$$
\end{thm}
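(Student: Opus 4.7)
The plan is to bound $h_\cS-h_F$ by combining a mean value estimate for the pressure $P_\cS$ with a combinatorial tail bound on the single quantity $P_\cS(h_F)$. Since $F\subset E$ gives $Z_n(F,t)\leq Z_n(E,t)$, one has $P_F\leq P_\cS$ and hence $h_F\leq h_\cS$; the hypothesis $h_F\geq \theta_\cS$ then places the whole interval $[h_F,h_\cS]$ inside the domain on which $P_\cS$ is finite, convex, strictly decreasing, and differentiable by the thermodynamic formalism of strongly regular CIFS.

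First I apply the mean value theorem to $P_\cS$ on $[h_F,h_\cS]$, using $P_\cS(h_\cS)=0$: there exists $\xi\in(h_F,h_\cS)$ with $P_\cS(h_F)=-P_\cS'(\xi)(h_\cS-h_F)$. Convexity makes $-P_\cS'$ nonincreasing, so $-P_\cS'(\xi)\geq -P_\cS'(h_\cS)$, and the classical formula for the derivative of pressure at Bowen's parameter identifies $-P_\cS'(h_\cS)$ with the Lyapunov exponent $\chi_\cS$ defined in \eqref{lyap}. This yields
\[
h_\cS - h_F \;\leq\; \frac{P_\cS(h_F)}{\chi_\cS}.
\]

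Next I estimate $P_\cS(h_F)$ combinatorially. Each word $\om\in E^n$ is grouped by the positions where its letters fall in $E\stm F$: if there are $k$ such positions, chosen in $\binom{n}{k}$ ways, they split $\om$ into letters $e_1,\ldots,e_k\in E\stm F$ interlaced with subwords $\om^{(0)},\ldots,\om^{(k)}\in F^*$ of total length $n-k$. The chain rule and the trivial bound $\sup$-of-product $\leq$ product-of-$\sup$'s yield $\|\f'_\om\|_\infty\leq \prod_j \|\f'_{\om^{(j)}}\|_\infty \prod_l \|\f'_{e_l}\|_\infty$. Summing and writing $R=\sum_{e\in E\stm F}\|\f'_e\|^{h_F}_\infty$ gives $Z_n(E,h_F)\leq \sum_{k=0}^n \binom{n}{k}\bigl(\sup_m Z_m(F,h_F)\bigr)^{k+1}R^k$. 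A standard BDP argument using $\|\f'_{\om_1\om_2}\|_\infty\geq K^{-1}\|\f'_{\om_1}\|_\infty\|\f'_{\om_2}\|_\infty$ shows that $\ln Z_n(F,h_F)-h_F\ln K$ is superadditive, whose Fekete limit is $P_F(h_F)=0$, so that $Z_m(F,h_F)\leq K^{h_F}$ uniformly in $m$. Therefore
\[
Z_n(E,h_F)\;\leq\; K^{h_F}\bigl(1+K^{h_F}R\bigr)^n,
\]
and taking $\frac{1}{n}\ln$ followed by $n\to\infty$ produces $P_\cS(h_F)\leq \ln(1+K^{h_F}R)\leq K^{h_F}R$. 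Substituting into the mean value bound finishes the proof.

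The main obstacle is the combinatorial step: the partition sum $Z_n(E,h_F)$ is a priori wild because $E$ is infinite, and only the uniform block bound $Z_m(F,h_F)\leq K^{h_F}$, itself a nontrivial consequence of BDP applied to the finite subsystem at its own critical exponent, makes the counting over all block decompositions collapse into the clean binomial expansion above. The pressure-derivative identification in the first step is also delicate but is standard in this setting, since the Perron-Frobenius framework of \cite{MauldinRD_UrbanskiM_2003a, MU1} for strongly regular CIFS yields real-analyticity of $P_\cS$ on $(\theta_\cS,\infty)$ together with the variational formula for $P_\cS'$ in terms of equilibrium states.
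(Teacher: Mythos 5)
The paper states Theorem~\ref{hein} with a citation to \cite{HeinemannSM_UrbanskiM_2002a, ChousionisV_LeykekhmanD_UrbanskiM_2018a} and does not reproduce the proof, so there is no in-paper argument to compare against line by line; what I can say is that your reconstruction is correct and follows the same two-step strategy found in those references. Step one is the convexity/mean-value estimate: since $h_F\ge\theta_\cS$ keeps $[h_F,h_\cS]$ in the finiteness domain of $P_\cS$, and since $P_\cS(h_\cS)=0$ with $-P_\cS'(h_\cS)=\chi_\cS$ (the Lyapunov exponent of the equilibrium state $\tilde\mu_h$), convexity yields $\chi_\cS(h_\cS-h_F)\le P_\cS(h_F)$. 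Step two is the tail bound on $P_\cS(h_F)$: the BDP gives $Z_{m+n}(F,t)\ge K^{-t}Z_m(F,t)Z_n(F,t)$, so $\ln Z_n(F,h_F)-h_F\ln K$ is superadditive, and $P_F(h_F)=0$ forces $Z_m(F,h_F)\le K^{h_F}$ uniformly in $m$; decomposing each $\om\in E^n$ by the positions of its $E\setminus F$-letters and summing over the $\binom{n}{k}$ choices then gives $Z_n(E,h_F)\le K^{h_F}(1+K^{h_F}R)^n$ with $R=\sum_{e\in E\setminus F}\|\f_e'\|_\infty^{h_F}$, whence $P_\cS(h_F)\le\ln(1+K^{h_F}R)\le K^{h_F}R$. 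Combining the two inequalities gives exactly the stated bound, so the proof is sound.
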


From now on we are going to focus our attention on  systems generated by the conformal maps $\f_e(x)=\frac{1}{e+x}$ for $e \in \N$. In particular we are going to consider the systems
\begin{equation}
\label{cfedef}
\cf_E:= \left\{\f_{e}:[0,\min{E}^{-1}] \ra [0,\min{E}^{-1}]: \f_e(x)=\frac{1}{e+x} \mbox{ for }e \in E \right\},
\end{equation}
for $E \subset \N$. Indeed it is easy to check that if $e \in E$ then $\f_e ([0,\min{E}^{-1}] ) \subset [0,\min{E}^{-1}]$
and $\cf_E$ satisfies the properties \ref{cfi}-\ref{cfbdp} of Definition \ref{cifs}, see e.g. \cite{MU2}. Formally if $1 \notin E$, $\{\f_e \}_{e \in E}$ is not a conformal IFS because $\f'_1(0)=-1$. Nevertheless this is not a real problem since the system of second level maps $\{ \f_e \circ \f_j:(e,j) \in E \times E\}$ has the same limit set as $\cf_{E}$ and it is uniformly contractive. We are going to denote the limit set of $\cf_E$ by $J_{E}$. Notice that the limit set $J_E$ is the set of all irrational numbers in $(0,1)$ whose continued fractions expansion only contains digits from $E$.

The following proposition  provides an upper bound for the best distortion constant for the systems $\cf_E$ when $E \subset \N \stm \{1\}$. When $1 \in E$ it is easy to see that $K_{\cf_{E}}=4$, see e.g. \cite{MU2}.
\begin{propo}\label{prop: distortion}
\label{distco} Let $E \subset \{k, k+1, k+2, \dots\}$ for some $k \in \N, k \geq 2$. Then
$$K_{E}:=K_{\cf_{E}} \leq \exp \left( \frac{2}{k^2-1}\right).$$
\end{propo}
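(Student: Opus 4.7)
The plan is to exploit that each map $\f_e$ with $e\ge k$ is contracting by $\|\f'_e\|_\infty\le 1/k^2$ on $X=[0,(\min E)^{-1}]\subset[0,1/k]$, together with the explicit formula $|\f'_e(x)|=(e+x)^{-2}$, in order to reduce the distortion estimate to a convergent geometric series.

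Concretely, I would fix a word $\om=\om_1\cdots\om_n\in E^n$ and two points $x,y\in X$, and introduce the intermediate iterates $z_i(x)=\f_{\om_{i+1}}\circ\cdots\circ\f_{\om_n}(x)$ for $i<n$, with $z_n(x)=x$, and analogously $z_i(y)$. The chain rule gives $|\f'_\om(x)|=\prod_{i=1}^{n}(\om_i+z_i(x))^{-2}$. Taking logarithms of the ratio, applying the mean value theorem, and using $\om_i+z_i\ge \om_i\ge k$ yields
\[
\left|\ln\frac{|\f'_\om(y)|}{|\f'_\om(x)|}\right|
\;=\;2\left|\sum_{i=1}^{n}\ln\frac{\om_i+z_i(x)}{\om_i+z_i(y)}\right|
\;\le\;\sum_{i=1}^{n}\frac{2\,|z_i(x)-z_i(y)|}{\om_i}.
\]

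The core step will be controlling $|z_i(x)-z_i(y)|$ geometrically. Since $z_i=\f_{\om_{i+1}}(z_{i+1})$ and $\|\f'_{\om_{i+1}}\|_\infty\le k^{-2}$, iterating gives $|z_i(x)-z_i(y)|\le k^{-2(n-i)}|x-y|$, and combining this with $|x-y|\le\diam(X)\le 1/k$ yields $|z_i(x)-z_i(y)|\le k^{-(2(n-i)+1)}$. Substituting into the previous display and using $\om_i\ge k$, the estimate collapses to
\[
\left|\ln\frac{|\f'_\om(y)|}{|\f'_\om(x)|}\right|
\;\le\;\frac{2}{k^{2}}\sum_{j=0}^{n-1}k^{-2j}
\;<\;\frac{2}{k^{2}-1},
\]
uniformly in $n$, $\om\in E^*$, and $x,y\in X$. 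Exponentiating delivers $K_E\le \exp(2/(k^2-1))$.

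There is no real obstacle here; the estimate is classical in spirit for the Gauss map and its subsystems, and the argument is self-contained modulo the chain rule and the mean value theorem. The only care needed is the bookkeeping step of verifying that each intermediate iterate $z_i$ in fact lies in $[0,1/k]$, so that every contraction factor is bounded by $k^{-2}$ and every denominator $\om_i+z_i$ is bounded below by $k$; this is automatic since $\f_e(X)\subset X\subset[0,1/k]$ for each $e\in E$.
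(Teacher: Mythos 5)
Your proof is correct and takes essentially the same approach as the paper: both compare $\ln|\f'_\om(y)|$ and $\ln|\f'_\om(x)|$ term by term over the intermediate iterates $z_i$, bound the single-step log-difference by $\tfrac{2}{k}|z_i(x)-z_i(y)|$ via the mean value theorem, and then sum a geometric series using the uniform contraction factor $k^{-2}$ together with $\diam(X)\le 1/k$. The only cosmetic difference is that you write the product out explicitly through the chain rule, whereas the paper telescopes the logarithm additively; the resulting estimate and the order of steps are the same.
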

\begin{proof}
Let $I_k=[0,1/k]$. Recalling \eqref{cfedef}, $\f_{e} (I_k) \subset I_k$ for all $e \in E$. Notice that $\|\f'_e\|_\infty \leq k^{-2}$ for all $e \in E$, hence 
$\|\f'_{\om}\|_\infty \leq k^{-2 |\om|}$
for all $\om \in E^{\ast}$. Therefore by the mean value theorem,
\begin{equation}
\label{diamk}
\diam(\f_{\om}(I_k)) \leq k^{-2|\om|-1}.
\end{equation}
If $e \in E$ and $x, y \in I_k$,
\begin{equation}
\label{logdif1}
\begin{split}
\big|\ln |\f'_e(y)|-\ln |\f'_e(x)|\big|&=2|\ln(e+y)-\ln(e+x)|  \\
& \leq \frac{2}{\min \{e+y,e+x\}}|y-x| \leq \frac{2}{k} |y-x|.
\end{split}
\end{equation}
Let $q \in \N$ and $\om \in E^{q}$. For any $z \in I_k$ let
$$z_j(\om)=\begin{cases}\f_{\om_{j+1}} \circ \dots \circ \f_{\om_{q}}(z), \quad \mbox{ if }j=1, \dots, q-1 \\
z \quad \mbox{ if }j=q.
\end{cases}$$
 Then for all $x,y \in I_k$,
 \begin{equation*}
\label{logdif2}
\begin{split}
\big|\ln |\f'_\om(y)|-\ln |\f'_\om(x)|\big|&\leq \sum_{j=1}^q \left| \ln|\f'_{\om_j}(x_j(\om))|-\ln|\f'_{\om_j}(y_j(\om))| \right| \\
&\overset{\eqref{logdif1}}{\leq} \frac{2}{k} \sum_{j=1}^q |x_j(\om)-y_j(\om)|\overset{\eqref{diamk}}{\leq} \frac{2}{k} \sum_{j=1}^q k^{-2(q-j)-1} \\
& \leq \frac{2}{k^2} \sum_{i=0}^\infty k^{-2i}=\frac{2}{k^2-1}.
\end{split}
\end{equation*}
Hence for all $x,y \in I_k$ and every $\om \in E^{\ast}$ we get that
$$\frac{|\f'_{\om}(x)|}{|\f'_{\om}(y)|} \leq \exp \left(\frac{2}{k^2-1}\right).$$
The proof is complete.
\end{proof}

We will now provide lower bounds for the Lyapunov exponent of the systems $\cf_{E}$ for $E \subset \N$.
\begin{propo}\label{prop: Lyapunov}
\label{lyapbound} Let $E \subset \{k, k+1, k+2, \dots\}$ for some $k \in \N$. Then
$$\chi_E:=\chi(\cf_{E}) \geq 2 \ln \left( \frac{k+\sqrt{k^2+4}}{2} \right).$$
\end{propo}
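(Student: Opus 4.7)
The plan is to manipulate the integral defining $\chi_E$ so that $\sigma$-invariance of $\tilde\mu_h$ combines with Jensen's inequality applied to a cleverly chosen convex auxiliary function; the whole argument should reduce to a self-consistent scalar inequality whose extremal solution is precisely the claimed bound. First, since $\f_e'(x) = -(e+x)^{-2}$, the definition \eqref{lyap} gives
\[
\chi_E \;=\; 2\int_{E^\N}\ln(\om_1 + \pi(\sg\om))\,d\tilde\mu_h(\om),
\]
so writing $I := \chi_E/2$ the target becomes $I \geq \ln\bigl((k+\sqrt{k^2+4})/2\bigr)$. The hypothesis $E \subset \{k,k+1,\ldots\}$ gives $\om_1 \geq k$ pointwise, and combining this with shift-invariance of $\tilde\mu_h$ yields
\[
I \;\geq\; \int \ln(k+\pi(\sg\om))\,d\tilde\mu_h \;=\; \int \ln(k+\pi(\om))\,d\tilde\mu_h.
\]

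The second step is where the trick lies. I introduce $f(\om) := -\ln \pi(\om)$; from the fixed-point relation $\pi(\om) = 1/(\om_1+\pi(\sg\om))$ one has $f(\om) = \ln(\om_1 + \pi(\sg\om))$, so $\int f\,d\tilde\mu_h = I$. In these variables the previous inequality reads $I \geq \int g\circ f\,d\tilde\mu_h$ with $g(y) := \ln(k + e^{-y})$. A short computation gives $g''(y) = ke^y/(ke^y+1)^2 > 0$, so $g$ is convex on $\R$ and Jensen's inequality produces
\[
\int g\circ f\,d\tilde\mu_h \;\geq\; g\!\left(\int f\,d\tilde\mu_h\right) \;=\; g(I) \;=\; \ln(k + e^{-I}).
\]
Combining, $I \geq \ln(k + e^{-I})$, i.e.\ $e^{2I} - k e^I - 1 \geq 0$; setting $u = e^I$ and taking the positive root forces $u \geq (k+\sqrt{k^2+4})/2$, which transcribes to exactly the required lower bound on $\chi_E = 2I$.

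The main obstacle I foresee is choosing the right function for Jensen. The obvious attempt is to apply Jensen directly to the convex $-\ln$, giving $I = -\int \ln \pi\,d\tilde\mu_h \geq -\ln \int \pi\,d\tilde\mu_h$, which shifts the problem to bounding $M := \int \pi\,d\tilde\mu_h$ from above by $(\sqrt{k^2+4}-k)/2$; and every natural Jensen applied to $\pi$ (for instance via the convex $1/x$ and $\pi = 1/(\om_1+\pi\circ\sg)$) flows in the wrong direction, producing only a lower bound on $M$. The way around this is to postpone Jensen until after both the $\om_1 \geq k$ estimate and the shift-invariance step; only then does the integrand become a convex function of $f = -\ln \pi$, closing the loop into the self-consistent inequality $I \geq \ln(k+e^{-I})$ whose extremal solution is exactly $\ln\bigl((k+\sqrt{k^2+4})/2\bigr)$, matching the degenerate case $E=\{k\}$ (where $\pi\equiv\alpha$ with $\alpha = 1/(k+\alpha)$) in which the bound is saturated.
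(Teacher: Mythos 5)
Your proof is correct and takes a genuinely different route from the paper's. The paper works with the continuants: it uses $\|\f_\om'\|_\infty = q_{|\om|}(\om)^{-2}$, shows $q_n(\om)\ge q_n^\ast$ where $q_n^\ast$ is the continuant of the constant word $k^n$, applies shift-invariance together with the chain rule to get $\chi_E \ge \tfrac{2}{n}\ln q_n^\ast$, and then solves the linear recurrence $q_n^\ast = kq_{n-1}^\ast + q_{n-2}^\ast$ to extract the growth rate $r_1 = (k+\sqrt{k^2+4})/2$. You instead work entirely with the fixed-point relation $\pi(\om) = 1/(\om_1 + \pi(\sg\om))$: after the $\om_1\ge k$ estimate and shift-invariance, you recognise the integrand as $g(f(\om))$ with $f = -\ln\pi$ and $g(y)=\ln(k+e^{-y})$ convex, apply Jensen, and close the self-consistent inequality $I\ge g(I)$. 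Both arguments ultimately arrive at the same characteristic equation $u^2 - ku - 1 = 0$, but the paper reaches it through the asymptotics of a linear recurrence while you reach it through a scalar fixed-point inequality. Your version is shorter and isolates the structural ingredients more cleanly (shift-invariance, the functional equation for $\pi$, and convexity), whereas the paper's version exposes the continuant machinery that is reused elsewhere (e.g.\ in Proposition \ref{bijpropo}). One small remark for completeness: when $\chi_E = +\infty$ the bound is trivial, and when $\chi_E<+\infty$ the function $f$ is integrable against the probability measure $\tilde\mu_h$, so Jensen applies without issue; your writeup implicitly assumes this and it is worth saying explicitly.
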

\begin{proof} For any $\om \in E^{n}$ we define 
\begin{equation}
\label{recur}q_{-1}(\om)=0,\, q_0(\om)=1 \mbox{ and }q_i(\om)=\om_i q_{i-1}(\om)+q_{i-2}(\om) \mbox{ for }i=1, \dots, n.
\end{equation}
It is not difficult to see, see e.g. \cite[Lemma 4.1]{KessebohmerM_ZhuS_2006}, that for all $\om \in \N^{n}, n \in \N,$ and $x \in [0,1]$
\begin{equation}
\label{recurder}
\f'_{\om}(x)=\frac{(-1)^n}{q_n(\om)^2\left( 1+x \frac{q_{n-1}(\om)}{q_n(\om)}\right)^2}.
\end{equation}
Therefore  for all $\om \in \N^{n}$,
$$\|\f'_{\om}\|_{\infty}=|\f'(0)|=q_n(\om)^{-2}.$$
Now consider the sequence $q_n^\ast=q_n(k^n)$ where $k^n:=k\dots k \in E^n$ .  It is easy to see using \eqref{recur} and strong induction that for all $n\in \N$ and every $\om \in E^n$, 
$q_n(\om) \geq q_n^{\ast}.$ Hence for all $\om \in E^n$,
\begin{equation}
\label{derboundrec}
\|\f'_{\om}\|_{\infty} \leq {{q_n^\ast}}^{-2}.
\end{equation}

Let $\zeta: E^\N \ra \R$ defined by 
$$\zeta(\om)=\ln\|\phi'_{\om_1}(\pi(\sg(\om))\|,$$
and note that if $\om \in E^\N$, then
\begin{equation*}\begin{split}
\sum_{j=0}^{n-1}  \zeta \circ \sg^j(\om)&=\sum_{j=0}^{n-1} \log \| \f'_{\om_{j+1}}(\pi(\sg^{j+1}(\om)))\|\\
&=\log\left(\| \f'_{\om_{1}}(\pi(\sg^{1}(\om)))\|\,\| \f'_{\om_{2}}(\pi(\sg^{2}(\om)))\|\cdots \|\f'_{\om_{n}}(\pi(\sg^{n}(\om)))\| \right).
\end{split}\end{equation*}
By the Leibniz rule and the fact that  $\pi(\sg^k(\om))= \f_{\om_{k+1}}\circ \dots \circ \f_{\om_n}(\pi(\sg^n(\om)))$ for $1 \leq k \leq n$ we deduce that
\begin{equation}
\label{logleib}
\sum_{j=0}^{n-1} \zeta \circ \sg^j(\om)=\log \| \f'_{\om|_n}(\pi(\sg^n(\om)))\|.
\end{equation}
Recalling \eqref{lyap} and using the fact that $\tilde{\mu}_h$ is a shift invariant measure we deduce that
$$\chi_{E}=-\frac{1}{n} \int_{E^\N} \log \|\f'_{\om|_n}(\pi(\sg^n(\om)))\| d \tilde{\mu}_h.$$
Therefore, \eqref{derboundrec} implies that for all $n \in \N$,
\begin{equation}
\label{lyap1}
\chi_E \geq \frac{2}{n} \ln q_n^{\ast}.
\end{equation}

According to \eqref{recur}, the sequence $(q_n^\ast)_{n \in \N}$ is given by the following recursive formula,
\begin{equation}
\label{recur1}q^{\ast}_{-1}=0, \,q^{\ast}_0(\om)=1 \mbox{ and }q^{\ast}_i(\om)=k q^{\ast}_{i-1}(\om)+q^{\ast}_{i-2}(\om) \mbox{ for }i \in \N.
\end{equation}
This is a linear recurrence relation with constant coefficients, hence 
$$q^{\ast}_n=c_1 r_1^n+c_2r_2^n$$
where $c_1,c_2 \in \R \stm \{0\}$ and
$$r_1=\frac{k+ \sqrt{k^2+4}}{2}, \quad r_2=\frac{k- \sqrt{k^2+4}}{2}.$$
We record that $r_1,r_2$ are solutions to the characteristic equation $r^2-kr-1=0$ of the recursive relation. The coefficients $c_1,c_2$ can be computed explicitly by the initial conditions, nevertheless we will only need the fact that $c_1>0$. This follows because $q^{\ast}_n >0$ for all $n \in \N$ and $r_2<0$. Note also that $r_1 > |r_2|$. Therefore for $n$ large enough
$$\left|c_2 \left( \frac{r_2}{r_1}\right)^n \right|< \frac{c_1}{2}.$$
Since $$q^{\ast}_n=r_1^n \left(c_1+c_2 \left( \frac{r_2}{r_1}\right)^n\right),$$
we deduce that for $n$ large enough,
$$\frac{c_1}{2}r_1^n \leq q^{\ast}_n \leq \frac{3c_1}{2}r_1^n.$$
Using \eqref{lyap1} and letting $n \ra +\infty$ we deduce that
$$\chi_E \geq 2 \ln r_1=2 \ln \left( \frac{k+\sqrt{k^2+4}}{2} \right).$$
The proof is complete.
\end{proof}
\begin{propo}
\label{bijpropo} Let $E,F \subset \N$ and suppose that there exists an increasing bijection $s: E \ra F$. Then
$$\dim_{\cH}(J_F) \leq \dim_{\cH}(J_E).$$
\end{propo}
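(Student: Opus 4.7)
The plan is to compare the topological pressures of $\cf_E$ and $\cf_F$ at the level of the partition sums $Z_n$ and then appeal to Bowen's formula (Theorem~\ref{t1j97}). The hypothesis on $s$ ensures that $s(e)\geq e$ for every $e\in E$, so for each $n\in\N$ the coordinate-wise map $\tilde s:E^n\to F^n$ given by $\tilde s(\om_1\cdots\om_n)=s(\om_1)\cdots s(\om_n)$ is a bijection under which every coordinate is weakly enlarged.

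I would then invoke the explicit formula $\|\f'_\om\|_\infty=q_n(\om)^{-2}$ derived inside the proof of Proposition~\ref{lyapbound}, together with the recurrence $q_{-1}=0$, $q_0=1$, $q_i(\om)=\om_i q_{i-1}(\om)+q_{i-2}(\om)$. A strong induction on $i$---using $s(\om_i)\geq\om_i$ together with the nonnegativity of all the $q_j$'s, which ensures that inflating both the coefficient and the previous two terms inflates $q_i$---yields $q_i(\tilde s(\om))\geq q_i(\om)$, and in particular $\|\f'_{\tilde s(\om)}\|_\infty\leq\|\f'_\om\|_\infty$ for every $\om\in E^n$. Summing over $\om\in E^n$ then gives
$$Z_n(F,t)\,=\,\sum_{\om\in E^n}\|\f'_{\tilde s(\om)}\|_\infty^{t}\,\leq\,\sum_{\om\in E^n}\|\f'_\om\|_\infty^{t}\,=\,Z_n(E,t),$$
for every $t\geq 0$.

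Taking $\tfrac{1}{n}\log$ and letting $n\to\infty$ I obtain $P_F(t)\leq P_E(t)$ on the common domain of finiteness, and monotonicity of each pressure in $t$ (Proposition~\ref{p2j85}) yields $h_F\leq h_E$ via the defining formula $h_\cS=\inf\{t\geq 0:P(t)\leq 0\}$. Theorem~\ref{t1j97} then identifies these Bowen parameters with the Hausdorff dimensions of $J_F$ and $J_E$, which completes the proof. The only real obstacle is the bookkeeping in the inductive comparison of the $q_i$'s; once that is in hand everything else is just a direct transport of the partition-sum estimate along $\tilde s$.
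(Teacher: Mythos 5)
Your proof is essentially the paper's: compare the continuants $q_n$ under the coordinate-wise bijection via the recurrence (strong induction giving $q_n(\tilde s(\om))\geq q_n(\om)$), dominate the partition sums $Z_n(F,t)\leq Z_n(E,t)$, pass to $P_F\leq P_E$, and conclude through Bowen's formula as in Theorem~\ref{t1j97}. One caveat, shared with the paper's own proof: the assertion $s(e)\geq e$ does not actually follow from $s$ being an increasing bijection between subsets of $\N$ (for instance $s(2n)=n$ maps $2\N$ bijectively and increasingly onto $\N$ with $s(e)<e$), so strictly speaking the proposition needs that extra hypothesis, which is indeed satisfied in every place it is invoked in the paper.
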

\begin{proof} Recalling \eqref{recurder} from the proof of Proposition \ref{lyapbound} we have that 
$$\|\f'_{\om}\|_\infty=q_{|\om|}^{-2}(\om)$$
for all $\om \in \N^\ast$, where the sequence $q_n(\om)$ was defined in \eqref{recur}. Now if $\om \in E^n$ we define a mapping $S_n:E^n \ra F^n$ by
$$S_n(\tau)=(s(\om_k))_{k=1}^n, \quad \om \in E^n.$$
Note that $S_n$ is a bijection and $\om_k \leq (S_n(\om))_k$ for all $k=1,\dots,n$. It then follows easily by induction (using the recursive properties of $q_n$) that 
$$q_n(\om) \leq q_n(S_n(\om)), \quad \mbox{ for all }\om \in E^n.$$
Therefore for all $t \geq 0$, we have
\begin{equation*}
\begin{split}
Z_n(F,t)&=\sum_{\om \in E^n} \|\f'_{S_n(\om)}\|^{t}=\sum_{\om \in E^n} q^{-2t}_{n}(S_n(\om)) \leq \sum_{\om \in E^N} q^{-2t}_{n}(\om)\\
&=\sum_{\om \in E^N} \|\f'_{\om}\|^t_\infty=Z_n(E,t).
\end{split}
\end{equation*}
Hence $P_{F}(t) \leq P_E(t)$ and Theorem \ref{t1j97} implies that
$$\dim_{\cH}(J_F)=\inf \{t \geq 0: P_F(t) \leq 0\} \leq \inf \{t \geq 0: P_{E}(t)\leq 0\}=\dim_{\cH}(J_E).$$
The proof is complete.
\end{proof}

\section{Hausdorff dimension estimates for continued fractions with restricted entries}\label{sec:dim}

In this section we will establish rigorous Hausdorff dimension estimates for continued fractions whose entries are restricted to infinite sets. Although currently there are plenty of good and quite fast algorithms to calculate the Hausdorff dimension of  limit sets of conformal IFSs with high accuracy, for example \cite{McMullenC_1998, JenkinsonO_PollicottM_2001, JenkinsonO_PollicottM_2018, FalkRS_NussbaumRD_2016a}, to the best of our knowledge only the approach of Falk-Nussbaum in  \cite{FalkRS_NussbaumRD_2016a, FalkRS_NussbaumRD_2016b}
gives rigorous upper and lower bounds with good accuracy for  IFSs with arbitrary large alphabet. In this paper we adopt their computational approach  supplemented by the  interval arithmetic software \texttt{IntLab} for verification of the float point computations \cite{RumpS_2010}. Since the computational method of Falk and Nussbaum plays an important role in our numerical experiments, we will now describe their approach in more detail. 

\subsection{The Method of Falk and Nussbaum} 

The method developed in \cite{FalkRS_NussbaumRD_2016a, FalkRS_NussbaumRD_2016b} is quite general, but for simplicity, and since in the present paper we are only concerned with continued fractions, we are going to explain how it applies to the systems $\cf_E$, defined in \eqref{cfedef}.
A key ingredient in \cite{FalkRS_NussbaumRD_2016a, FalkRS_NussbaumRD_2016b} is the finite dimensional approximation of a parametric family of {\em transfer operators}. In the case of the continued fractions IFS $\cf_E$, where $E=\{e_i\}_{i=1}^P$ is a finite set of positive integers, the transfer operators  $L_t:C^k([0,1])\to C^k([0,1]), t >0, k \in \N$,  take the form
\begin{equation}\label{eq: Perron-Frobenius}
 (L_tf)(x)=\sum_{i=1}^{P}|{\phi'}_{e_i}(x)|^{t}f({\phi}_{e_i}(x))=\sum_{i=1}^P{|x+{e_i}|^{-2t}}f\left(\frac{1}{x+{e_i}}\right).
\end{equation}
We start by restating \cite[Theorem 3.1]{FalkRS_NussbaumRD_2016b} in the case of $\cf_E$. 
\begin{theorem}\label{thm: Falk_Nuss 1} 
Let $E=\{e_i\}_{i=1}^P$ be a finite set of positive integers and let $\cf_E$ be the corresponding continued fractions IFS as in \eqref{cfedef}. For all $t >0$, $L_t:C^k([0,1])\to C^k([0,1])$ has a unique strictly positive eigenvector $\rho_t$ with $L_t \rho_t=\lambda_t \rho_t$,  where $\lambda_t>0$ and $\lambda_t=r(L_t)$ is the  spectral  radius  of $L_t$. Furthermore,  the  map $t\to \lambda_t$ is  strictly  decreasing  and  continuous,  and  for  all $1\leq p \leq k$,
\begin{equation}
\label{thm31der}
(-1)^p \rho^{(p)}_t(x)>0\quad \text{for all} \quad x\in[0,1]
\end{equation}
and
\begin{equation}
\label{thm31normder}
|\rho^{(p)}_t(x)|\le (2 t)(2t+ 1)\cdots(2t+p-1)(k^{-p})\rho_t(x),
\end{equation}
where $k= \min_j{e_j}$.  Finally, the Hausdorff dimension of the limit set $J_E$ is the unique parameter $h$ such that $\lambda_h=1$.
 \end{theorem}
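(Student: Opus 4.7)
The plan is to establish the four assertions in turn: the Perron--Frobenius statement, the quantitative derivative control \eqref{thm31der}--\eqref{thm31normder} via an invariant cone, monotonicity and continuity of $\lambda_t$, and the Bowen-type dimension characterization. For the first assertion I would appeal to the Krein--Rutman theorem applied to $L_t$. Because $1/(x+e_i)$ is real-analytic in $x$, each summand in \eqref{eq: Perron-Frobenius} is smoother than merely $C^k$, so $L_t$ is a compact positive operator on $C^k([0,1])$ by Arzela--Ascoli. Strong positivity (equivalently, some iterate of $L_t$ sends every nontrivial nonnegative function to a strictly positive one) is standard for transfer operators associated with finite subsystems of the Gauss map with H\"older potentials, so Krein--Rutman provides a simple eigenvalue $\lambda_t = r(L_t) > 0$ with a unique (up to scale) strictly positive eigenvector $\rho_t$.

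To establish \eqref{thm31der}--\eqref{thm31normder} I would exhibit an invariant cone containing $\rho_t$. Writing $k = \min_j e_j$ and $c_p = (2t)(2t+1)\cdots(2t+p-1) k^{-p}$, set
\[\mathcal{K} = \bigl\{f \in C^k([0,1]) : f > 0,\ (-1)^p f^{(p)} \geq 0 \text{ and } |f^{(p)}| \leq c_p f \text{ for } 1 \leq p \leq k\bigr\}.\]
The $p=1$ case already reveals the mechanism: setting $y_i = 1/(x+e_i)$,
\[(L_t f)'(x) = -\sum_{i=1}^P (x+e_i)^{-2t-1}\bigl[\,2t\, f(y_i) + (x+e_i)^{-1} f'(y_i)\,\bigr],\]
and the bracket lies in $\bigl[\,2t(1-k^{-2}) f(y_i),\, 2t f(y_i)\,\bigr]$ thanks to $(x+e_i)^{-1} \leq k^{-1}$ and $|f'(y_i)| \leq 2t k^{-1} f(y_i)$. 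This yields $-(L_t f)' \geq 0$ and $|(L_t f)'| \leq 2t k^{-1} L_t f$, i.e.\ exactly the $p=1$ inequalities defining $\mathcal{K}$. The higher $p$ cases follow by induction through Faa di Bruno: derivatives of $(x+e_i)^{-2t}$ and of $1/(x+e_i)$ both alternate in sign, each additional differentiation of the prefactor produces a factor $(2t+p-1)$, and each additional factor $(x+e_i)^{-1}$ contributes $k^{-1}$, matching the recursion defining $c_p$. Since $1 \in \mathcal{K}$ and $\lambda_t^{-n} L_t^n 1 \to \rho_t/\|\rho_t\|$ in $C^k$ by simplicity of $\lambda_t$, the closed inequalities defining $\mathcal{K}$ pass to the limit and $\rho_t \in \mathcal{K}$.

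Continuity of $t \mapsto \lambda_t$ is then a consequence of continuity of $t \mapsto L_t$ in operator norm combined with perturbation theory for the simple dominant eigenvalue of a compact operator. Strict monotonicity follows from the pointwise inequality $L_{t'} f \leq L_t f$ (strict somewhere on $[0,1]$) whenever $t' > t$ and $f > 0$, because $(x+e_i)^{-2(t'-t)} \leq 1$ with strict inequality unless $e_i = 1$ and $x = 0$; pairing with $\rho_{t'}$ forces $\lambda_{t'} < \lambda_t$, or equivalently one invokes Proposition \ref{p2j85}(ii) once the identification $\log \lambda_t = P(t)$ is made. For the final assertion, iterating $L_t$ on the constant function $1$ gives $(L_t^n 1)(x) = \sum_{\om \in E^n} |\phi_\om'(x)|^t$, which by bounded distortion (Definition \ref{cifs}(iv)) is sandwiched between universal constants times $Z_n(t)$; Gelfand's formula then yields $\log \lambda_t = \lim_n \tfrac{1}{n} \log (L_t^n 1)(x) = P(t)$, and Bowen's formula (Theorem \ref{t1j97}) identifies $\dim_{\cH}(J_E)$ as the unique $h$ with $P(h) = 0$, i.e.\ $\lambda_h = 1$.

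The main obstacle I anticipate is the higher-order ($p \geq 2$) cone invariance: Faa di Bruno produces a combinatorial explosion of terms, and verifying that the rising-factorial coefficient $(2t)(2t+1)\cdots(2t+p-1)$ emerges with the correct sign requires careful bookkeeping of how successive differentiations of the prefactor $(x+e_i)^{-2t-p+1}$ compound with those of $1/(x+e_i)$. Once the $p=1$ calculation is in place the induction is mechanical, and the remaining monotonicity, continuity, and Bowen-formula steps are soft.
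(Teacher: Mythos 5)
The paper offers no proof of this statement; it cites \cite[Theorem~3.1]{FalkRS_NussbaumRD_2016b}, and the remark that follows traces which parts were already known, explicitly pointing out that the derivative bound \eqref{thm31normder} and its proof are new to Falk and Nussbaum. Your proposal is therefore an independent attempt. Its softer components (Perron--Frobenius via Krein--Rutman, strict monotonicity via pairing with $\rho_{t'}$, continuity by perturbation, $\log\lambda_t = P_E(t)$ and then Bowen's formula via Theorem~\ref{t1j97}) are sound in outline, but the part that actually carries the theorem --- \eqref{thm31der}--\eqref{thm31normder} --- rests on the claim that the cone $\mathcal{K}$ is $L_t$-invariant, and this claim does not hold beyond $p=1$.

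The problem is already visible at $p=2$. With $g_i=(x+e_i)^{-1}$ one computes
\[(L_t f)''(x)=\sum_i g_i^{2t+2}\bigl[2t(2t+1)\,f(g_i)+(4t+2)\,g_i f'(g_i)+g_i^2 f''(g_i)\bigr].\]
Your ``mechanism'' at $p=1$ worked because the lone $f'$-term has a helpful sign and could simply be dropped, leaving only the derivative of the prefactor. For $p\geq 2$ a positively signed term $g_i^{2t+2p}f^{(p)}(g_i)$ survives, and the cone hypothesis controls it only by $g_i^{2t+2p}f^{(p)}(g_i)\leq k^{-2p}c_p\,g_i^{2t}f(g_i)$; after summing, this yields $(L_t f)^{(p)}\leq c_p(1+k^{-2p})L_t f$, strictly \emph{weaker} than membership in $\mathcal{K}$. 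So the inductive step you call ``mechanical'' is not correct, and one can in fact produce $f\in\mathcal{K}$ violating the cone inequality for $L_t f$: engineer $|f'|/f$ small and $f''/f$ close to $c_2$ near the top of the range of the $g_i$, hiding the cost in $f'''$, which is unconstrained when $p\leq k$.

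The conclusion $\rho_t\in\mathcal{K}$ is still true, but the correct route avoids cone invariance of $L_t$ entirely. By \eqref{recurder}, each $\om\in E^n$ gives $|\phi_\om'(x)|^t = q_n(\om)^{-2t}(1+a_\om x)^{-2t}$ with $a_\om=q_{n-1}(\om)/q_n(\om)\in(0,1/k]$, and since
\[\frac{d^p}{dx^p}\bigl[(1+a_\om x)^{-2t}\bigr]=(-1)^p(2t)(2t+1)\cdots(2t+p-1)\,a_\om^p\,(1+a_\om x)^{-2t-p},\]
every single summand of $L_t^n 1=\sum_{\om\in E^n}|\phi_\om'|^t$ satisfies \eqref{thm31der}--\eqref{thm31normder} on the nose. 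As $\mathcal{K}$ is a convex cone, closed under positive combinations and under $C^k$-limits, one gets $L_t^n 1\in\mathcal{K}$ for every $n$, hence, after normalizing and passing to the limit, $\rho_t\in\mathcal{K}$. This is the natural extension of the observation in the paper's remark (item~(5)) that the sign conditions for $p\leq 2$ are inherited from the individual $|\phi_\om'|^t$, and is presumably what underlies the Falk--Nussbaum proof.

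A secondary gap: $L_t$ is not compact on $C^k([0,1])$ by the argument you give. Composing a $C^k$ function with an analytic contraction does not raise its regularity, so Arzela--Ascoli is not available. One needs instead to work on a space of holomorphic functions (where $L_t$ is nuclear), or to prove a Lasota--Yorke inequality giving quasi-compactness on $C^k$, or to run a Birkhoff-type cone contraction; any of these then supports the Perron--Frobenius conclusion and the $C^k$-convergence of $\lambda_t^{-n}L_t^n 1$ to a multiple of $\rho_t$.
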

 
\begin{rem}
Due to its crucial role in the numerical approach of Falk and Nussbaum, we will now give a brief history of Theorem \ref{thm: Falk_Nuss 1}. Although a proof can be found in \cite{FalkRS_NussbaumRD_2016b}, we would like to remark that several parts of this theorem have been established in stronger forms in the past using different methods. More precisely:
\begin{enumerate}
\item The proofs  of \cite[Theorem~6.1.2 and Corollary~6.1.4]{MauldinRD_UrbanskiM_2003a} (carried on therein for $t$ being such that $\lambda_t=1$) yield the functions $\rho_t$ to be real-analytic and not merely $C^{k}$. In fact these proofs give that real analyticity of $\rho_t$ holds for: 
\begin{enumerate}
\item all conformal IFSs, with any countable alphabet $F$, and not merely those generated by a continued fraction algorithm with a finite alphabet, and
\item all $t\in \{s>0:P_F(s)<+\infty\}$ (which is equal to $(0,+\infty)$ if $E$ is finite),
\end{enumerate}

\,\item The fact that the map $t\to \lambda_t$ is strictly decreasing is known in the general framework of thermodynamic formalism since 1978 when it was proved by Ruelle in \cite{ruelle}. For the realm of conformal IFSs with any countable alphabet $E$ (and not merely for IFSs generated by a continued fraction algorithm with a finite alphabet) see  \cite[Theorem~2.6.13]{MauldinRD_UrbanskiM_2003a} and the references therein.

\,\item It follows by  \cite[Theorem~{2.6.12}]{MauldinRD_UrbanskiM_2003a} that the map $t\mapsto \lambda_t$ is real-analytic and not merely continuous. In fact, this is proven in \cite{MauldinRD_UrbanskiM_2003a} for all conformal IFSs, with any countable alphabet $F$. 

\,\item The fact that the Hausdorff dimension of the limit set $J_E$ is the unique parameter $h$ such that $\lambda_h=1$ is known for all conformal IFSs, with any finite alphabet $F$, since 1988 when it was proved by Bedford in \cite{bedford}.  
 The full version of this formula, holding for all conformal IFSs with any countable alphabet $F$, was proved in \cite[Theorem~3.15]{MU1}, see also  \cite[Theorem~4.2.13]{MauldinRD_UrbanskiM_2003a}.

\,\item The facts that $\frac{\partial}{\partial x}\rho_t(x)\le 0$ and $\frac{\partial^2}{\partial x^2}\rho_t(x)\ge 0$ for all $x\in[0,1]$ follow immediately from the proof of \cite[Theorem~2.4.3]{MauldinRD_UrbanskiM_2003a} and the fact that all the functions $[0,1]\ni x\mapsto |\phi_\om'(x)|$, $\om\in E^*$, generated by the system $\cf_E$, are decreasing and convex. 

\,\item To the best of our knowledge,  \eqref{thm31normder} and its proof first appears in the work of Falk and Nussbaum 
\cite{FalkRS_NussbaumRD_2016b}.
\end{enumerate} 
\end{rem}

Note that applying \eqref{thm31der} for $p=1$ and $p=2$  we get that $\rho_t(x)$ is decreasing and convex on $[0,1]$.  As an immediate consequence it also holds that
\begin{equation}\label{eq: property of vs}
\rho_t(x_2)\le \rho_t(x_1)e^{\frac{2t}{k}|x_2-x_1|},\quad \mbox{ for all } x_1,x_2\in [0,1].
\end{equation}

\begin{rem}
In fact, it already follows from the proofs of  \cite[Theorem~6.1.2 and Corollary~6.1.4]{MauldinRD_UrbanskiM_2003a} (carried on therein for $t$ being such that $\lambda_t=1$) and Proposition \ref{prop: distortion} that 
$$
\rho_t(x_2)\le \rho_t(x_1)\exp\left(\frac{2t}{k^2-1}|x_2-x_1|\right).
$$
Moreover  a similar estimate holds for all conformal IFSs (in fact GDMSs) with a countable alphabet and not merely for those generated by a continued fraction algorithm with a finite alphabet.
\end{rem}
Since $P$ can be arbitrary large, for computational reasons we first truncate the operator $L_t$.  We select $M< P$  and define
\begin{equation}\label{eq: Perron-Frobenius}
 (L_{M,t} f)(x)=\sum_{i\le M}{|x+{e_i}|^{-2t}}f\left(\frac{1}{x+{e_i}}\right).
\end{equation}
Since the eigenvector $\rho_t$ is positive, we have
$$
(L_{M,t}\rho_t)(x)\le (L_t\rho_t)(x), \quad \mbox{for all } x\in[0,1].
$$
To obtain an upper bound we notice that  \eqref{eq: property of vs} implies that for any $y\in[0,1]$,
$$
\rho_t(y)\le \rho_t(0)e^{\frac{2h}{k}y},
$$
and as a result
$$
\sum_{i= M+1}^P{|x+{e_i}|^{-2t}}\rho_t\left(\frac{1}{x+{e_i}}\right)\le \rho_t(0)e^{\frac{2t}{k e_{M+1}}}\sum_{i= M+1}^P{{e_i}^{-2t}}.
$$
 Thus we obtain the following two-sided bound for $(L_t\rho_t)(x)$ in terms of $(L_{M,t}\rho_t)(x)$  for any $x\in[0,1]$
 \begin{equation}\label{eq: two sided bound}
 (L_{M,t}\rho_t)(x)\le (L_t\rho_t)(x)\le (L_{M,t}\rho_t)(x)+\rho_t(0)e^{\frac{2t}{k e_{M+1}}}\sum_{i= M+1}^P{{e_i}^{-2t}}.
 \end{equation}
After the truncation step (which is only needed if $P$ is very large), we approximate $\rho_t$ with continuous piecewise linear functions. To be precise, for any integer $N$ we partition the interval $[0,1]$ uniformly with $N+1$ points
\begin{equation}
\label{mesh}
0=x_1<x_2<\cdots<x_{N}<x_{N+1}=1,
\end{equation}
  where $x_i = (i+1)l$ with $l = 1/N$, then $\rho_t^I$ on each subinterval $[x_i,x_{i+1}]$ is of the form
\begin{equation}\label{eq:interpolant}
\rho_t^I(x) = \rho_t(x_{i})\frac{x_{i+1}-x}{l}+\rho_t(x_{i+1})\frac{x-x_i}{l},
\end{equation}
i.e. $\rho_t^I$ is piecewise linear interpolant of $\rho_t$. It is easy to see that at  any node
$\rho_t(x_i)=\rho_t^I(x_i)$, $i=1,2,\dots,N+1$. From standard approximation theory (see for example \cite[Thm.~2.1.4.1]{StoerJ_BulirschR_1993}) we know that for $x\in[x_i,x_{i+1}]$
$$
\rho_t^I(x)-\rho_t(x) = \frac{1}{2}(x_{i+1}-x)(x-x_i)\rho''_t(\xi),
$$
for some $\xi=\xi_x\in [x_i,x_{i+1}]$. Notice that from Theorem \ref{thm: Falk_Nuss 1} 
$$
\rho''_t(\xi)\le (2t)(2t+ 1)k^{-2}\rho_t(\xi).
$$
Thus, using the above estimate and the convexity of $\rho_t$ for any $y\in [x_j,x_{j+1}]$
we obtain
\begin{equation}\label{eq: estimate vI-v}
\begin{aligned}
0<\rho_t^I(y)-\rho_t(y)&\le \frac{1}{2}(x_{j+1}-x)(x-x_j) (2t)(2t+1)k^{-2}\rho_t(\xi_y)\\
&\le \frac{1}{2}(x_{j+1}-x)(x-x_j) (2t)(2t+1)k^{-2}e^{\frac{2tl}{k}}\rho^I_t(y),
\end{aligned}
\end{equation}
where in the last step we used the fact that $\rho_t(\xi_y)\le e^{\frac{2t}{k}|\xi-y|}\rho_t(y)\le e^{\frac{2tl}{k}} \rho^I_t(y)$. Now let 
$$
\err_{j,t} := \frac{1}{2}(x_{j+1}-x)(x-x_j) (2t)(2t+1)e^{\frac{2tl}{k}}k^{-2}.
$$
Provided $\err_{j,t}<1$ we obtain
\begin{equation}\label{eq: inequality for v both sides}
(1-\err_{j,t})\rho_t^I(y)<\rho_t(y)<\rho_t^I(y), \quad \mbox{ for all } y\in[0,1],
\end{equation}
and as a result for any $x\in[0,1]$
\begin{equation}\label{eq: two-sides Ls inequality}
\begin{split}
&\sum_{i\le  M}{|x+{e_i}|^{-2t}}(1-\err_{i,t})\rho^I_t\left(\frac{1}{x+{e_i}}\right)\\
&\quad\quad\quad\quad\quad \le (L_{M,t}\rho_t)(x)\le \sum_{i\le M}{|x+{e_i}|^{-2t}}\rho^I_t\left(\frac{1}{x+{e_i}}\right).
\end{split}
\end{equation}
Let $\vec{\alpha}_t$ be the $(N+1)$-vector with entries 
$$(\vec{\alpha}_t)_j=\rho_t(x_j)=\rho^I_t(x_j) \mbox{ for }j=1,\dots,N+1.$$ 
The next step consists of determining two $(N+1)\times (N+1)$ matrices $A_{M,t}$ and $B
_{M,t}$ such that
\begin{equation}\label{eq: matrices truncated}
\begin{aligned}
(A_{M,t}\vec{\alpha}_t)_j &= \sum_{i\le M}{|x_j+{e_i}|^{-2t}}(1-\err_{i,t})\rho^I_t\left(\frac{1}{x_j+{e_i}}\right)\\
(B_{M,t}\vec{\alpha}_t)_j &= \sum_{i\le M}{|x_j+{e_i}|^{-2t}}\rho^I_t\left(\frac{1}{x_j+{e_i}}\right)+\rho^I_t(0)e^{\frac{2}{k e_{M+1}}}\sum_{i= M+1}^P{{e_i}^{-2t}}.
\end{aligned}
\end{equation}
The main difficulty in constructing the above matrices is to locate the interval $[x_m,x_{m+1}]$ that contains $(x_j+{e_i})^{-1}$ for each $1\le i\le M$ and $1\le j\le N+1$.  
Once such interval with the corresponding index $m$ is located, then from the formula \eqref{eq:interpolant}, we obtain
$$
\rho^I_t\left(\frac{1}{x_j+{e_i}}\right)= (\vec{\alpha}_t)_m\frac{x_{m+1}-\frac{1}{x_j+{e_i}}}{l}+(\vec{\alpha}_t)_{m+1}\frac{\frac{1}{x_j+{e_i}}-x_m}{l},
$$
and as a result for these particular $i$ and $j$, $${|x_j+{e_i}|^{-2t}}(1-\err_{i,t})\frac{x_{m+1}-\frac{1}{x_j+{e_i}}}{l}\mbox{ and }{|x_j+{e_i}|^{-2t}}\frac{x_{m+1}-\frac{1}{x_j+{e_i}}}{l}$$ contribute to the $(j,m)$ entry of the matrices $A_{M,t}$ and $B_{M,t}$, respectively. Similarly  ${|x_j+{e_i}|^{-2t}}(1-\err_{i,t})\frac{\frac{1}{x_j+{e_i}}-x_m}{l}$  and ${|x_j+{e_i}|^{-2t}}\frac{\frac{1}{x_j+{e_i}}-x_m}{l}$ are the contributions to the $(j,m+1)$ entry of the matrices $A_{M,t}$ and $B_{M,t}$ respectively.
Since $\rho^I_t(0)=(\vec{\alpha}_t)_1$, we have that $e^{\frac{2}{k e_{M+1}}}\sum_{i= M+1}^P{{e_i}^{-2t}}$ is just a contribution to the $(j,1)$ entry of the matrix $B_{M,t}$. We would also like to point out that the matrices $A_{M,t}$ and $B_{M,t}$ are usually  very sparse and have many zero columns if $N$ is large. This feature can be exploited in computations.  

The next central result in the approach is \cite[Lemma 3.2]{FalkRS_NussbaumRD_2016b}, which we restate below.
\begin{lm}\label{lemma: monotonicity}
Let $M$ be an $N\times N$ matrix with non-negative entries  and $w$ an $N$-vector with strictly positive components. Then,
$$
\begin{aligned}
\text{if}\quad (Mw)_j &\geq \lambda w_j, \quad j=1,\dots,N,\ \text{then}\ r(M)\geq \lambda,\\
\text{if}\quad (Mw)_j &\leq \lambda w_j, \quad j=1,\dots,N,\ \text{then}\ r(M)\le \lambda.
\end{aligned}
$$
\end{lm}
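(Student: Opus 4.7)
The plan is to reduce both statements to Gelfand's spectral radius formula $r(M) = \lim_{n\to\infty}\|M^n\|_\infty^{1/n}$ by iterating the given inequality. The crucial structural feature is that multiplication by the non-negative matrix $M$ preserves componentwise inequalities between non-negative vectors, so the hypothesis $Mw \geq \lambda w$ (respectively $Mw \leq \lambda w$) propagates to $M^n w \geq \lambda^n w$ (respectively $M^n w \leq \lambda^n w$) by an immediate induction on $n \in \N$.

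For the first assertion I would take the propagated inequality $M^n w \geq \lambda^n w$ and extract $\|M^n w\|_\infty \geq \lambda^n \|w\|_\infty$ by maximizing over coordinates; combined with the submultiplicativity $\|M^n w\|_\infty \leq \|M^n\|_\infty \|w\|_\infty$ and the strict positivity of $w$ (which makes $\|w\|_\infty>0$), this yields $\|M^n\|_\infty \geq \lambda^n$, and Gelfand's formula then delivers $r(M) \geq \lambda$.

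For the second assertion I would translate $M^n w \leq \lambda^n w$ into a bound on $\|M^n\|_\infty$ itself. The trick is to dominate the all-ones vector $\1=(1,\ldots,1)^\top$ componentwise by a multiple of $w$, which is possible precisely because $\min_i w_i > 0$: namely $\1 \leq (\min_i w_i)^{-1} w$, so by non-negativity of $M^n$ one gets $M^n \1 \leq \lambda^n (\min_i w_i)^{-1} w$. Since the $j$-th coordinate of $M^n \1$ is the $j$-th row sum of $M^n$, this bounds $\|M^n\|_\infty$ above by $\lambda^n \max_i w_i/\min_i w_i$, and Gelfand's formula again delivers $r(M) \leq \lambda$.

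There is no substantial obstacle; this is a textbook Collatz--Wielandt estimate, and in fact both inequalities are two halves of the standard variational characterization of the spectral radius of a non-negative matrix. The only conceptual role of the strict positivity of $w$ is to guarantee that $\min_i w_i>0$, so that the comparison $\1 \leq (\min_i w_i)^{-1} w$ used in the upper-bound direction holds with a finite constant, and so that the lower bound $\|M^n w\|_\infty \geq \lambda^n \|w\|_\infty$ in the other direction carries genuine information about $\|M^n\|_\infty$.
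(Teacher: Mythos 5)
Your proof is correct. Note, however, that the paper does not actually prove this lemma: it simply restates \cite[Lemma~3.2]{FalkRS_NussbaumRD_2016b} and uses it as a black box. Your argument via Gelfand's formula $r(M)=\lim_n\|M^n\|_\infty^{1/n}$ is a clean, self-contained, and entirely elementary route --- it avoids any appeal to the Perron--Frobenius theorem (a common alternative is to pair the hypothesis against a non-negative left Perron eigenvector $v$ of $M$ and use $v^\top w>0$), which is a small expository gain. One tiny point to make explicit: the induction $Mw\geq\lambda w \Rightarrow M^n w\geq \lambda^n w$ and the step $\max_j \lambda^n w_j=\lambda^n\|w\|_\infty$ both require $\lambda\geq 0$, since for odd $n$ a negative $\lambda$ would reverse the sign of $\lambda^n$. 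This is harmless --- if $\lambda<0$ the first conclusion $r(M)\geq\lambda$ is trivially true since $r(M)\geq 0$, and the second hypothesis cannot hold at all because $(Mw)_j\geq 0$ and $w_j>0$ force $\lambda\geq 0$ --- but it is worth stating, so that the reduction to $\lambda\geq 0$ is on the page rather than implicit.
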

Recall that by Theorem \ref{thm: Falk_Nuss 1} we have that 
$$(L_t\rho_t)(x_j)=r(L_t)\rho_t(x_j)$$ for any $j=1,\dots, N+1$, where $r(L_t)=\lambda_t$ denotes the spectral radius of $L_t$. 
Now notice that for $j=1,\dots, N+1,$
\begin{equation*}
\begin{split}
(A_{M,t}\vec{\alpha}_t)_j \overset{\eqref{eq: two-sides Ls inequality}}{\leq} (L_{M,t} \rho_t) (x_j) \overset{\eqref{eq: two sided bound}}{\leq} (L_t \rho_t) (x_j)=\lambda_t \rho_t (x_j)= r(L_t) (\vec{\alpha}_t)_j,
\end{split}
\end{equation*}
and
\begin{equation*}
\begin{split}
(B_{M,t}\vec{\alpha}_t)_j &\overset{\eqref{eq: two-sides Ls inequality}}{\geq} (L_{M,t} \rho_t) (x_j)+\rho^I_t(0)e^{\frac{2}{k e_{M+1}}}\sum_{i= M+1}^P{{e_i}^{-2t}} \\
&\overset{\eqref{eq: two sided bound}}{\geq} (L_t \rho_t) (x_j)=\lambda_t \rho_t (x_j)= r(L_t) (\vec{\alpha}_t)_j,
\end{split}
\end{equation*}
Therefore Lemma \ref{lemma: monotonicity} implies that for any $0<M \leq P$,
$$r(A_{M,t})\le r(L_t)=\lambda_h\le r(B_{M,t}).$$  
Let $h=\dim_{\cH}(J_E)$, and recall that according to Theorem \ref{thm: Falk_Nuss 1}, $r(L_{h})=\lambda_{h}=1$. Since the map $t\to \lambda_t$ is strictly decreasing if we find $\underline{h}$ such that 
$r(A_{M,{\underline{h}}})> 1$, then $r(L_{h})=1<r(A_{M,{\underline{h}}})\le r(L_{\underline{h}})$ and as a result $h> \underline{h}$. Similarly, if we find $\overline{h}$ such that 
$r(B_{M,{\overline{h}}})< 1$, then $r(L_{\overline{h}})\le r(B_{M,{\overline{h}}})< 1=r(L_{h})$ and as a result $h<\overline{h}$. In conclusion, we would have
  $\underline{h}< h< \overline{h}$, which is a rigorous effective estimate for the Hausdorff dimension of the set $J_E$. In the following Section we illustrate how this method can be used in order to estimate the Hausdorff dimension of the set $J_E$, for various concrete examples of infinite sets $E \subset \N$.  

\subsection{Concrete examples}

Based on the above description we have two strategies for approximating the Hausdorff dimension of infinite systems $\cf_{E}$, where $E=\{e_i\}_{i=1}^\infty$. 
We first select a sufficiently large $M \in \N$ and we let $F_M=\{e_i\}_{i=1}^M$. We also independently choose $N$, where recalling \ref{mesh} $N$ determines the size of the mesh. We then compute the two matrices $A_{M,t}$ and $B_{M,t}$, defined in \eqref{eq: matrices truncated}. Notice that in this approach the term $\rho^I_t(0)e^{\frac{2}{k e_{M+1}}}\sum_{i= M+1}^P{{e_i}^{-2t}}$ disappears. 
Using the method described in the previous section, we compute upper and lower bounds of the Hausdorff dimension of the limit set $J_{F_M}$, which we denote by $\underline{h}_M$ and $\overline{h}_M$, respectively. We then estimate the error $\dim_{\cH}(J_E)-\overline{h}_M$ using Theorem \ref{hein} as well as Propositions \ref{distco} and \ref{lyapbound}. 

The accuracy of $\underline{h}_M$ and $\overline{h}_M$ depends on the choice of $N$. Since we employ piecewise linear approximation it follows that for $N$ nodes the accuracy is essentially of order $N^{-2}$. The quantity $\dim_{\cH}(J_E)-\overline{h}_M$ obviously depends on $M$ but in this case it is harder to trace the exact dependance, as it is very much example dependent. 
We provide details of such estimates for each of our examples below. Finally we rigorously verify the accuracy of $\underline{h}_M$ and $\overline{h}_M$ using the interval arithmetic package \texttt{IntLab} \url{http://www.ti3.tu-harburg.de/rump/intlab/}.  

Although Theorem \ref{hein}  allows us to estimate the error $\dim_{\cH}(J_E)-\overline{h}_M$ for any $M \in \N$, for certain alphabets $E$ this error tends to zero rather slowly and one needs to consider quite large finite subsystems $F_M$ in order to obtain good estimates. As a result, in some cases these computations require substantial computer power. On the other hand this strategy is rather robust and the accuracy increases as $M$ and $N$ increase.

The second strategy allows us to obtain good accuracy with lighter computations. As in the previous case let $E=\{e_i\}_{i=1}^\infty$. Recall that Theorem \ref{t1j97} implies that  $\dim_{\cH} (J_{E})-\dim_{\cH} (J_{F_P})\to 0$ as $P\to \infty$, where  $F_P=\{e_i\}_{i=1}^P$, and we can use this fact to show that the Falk-Nussbaum method can be used to obtain lower and upper bounds for $\dim_{\cH}(J_E)$, not just for $\dim_{\cH}(J_{F_P})$. To be more precise let $\ve>0$ and let $P$ large enough such that  $\dim_{\cH} (J_{E})-\dim_{\cH} (J_{F_P})<\ve$, note that we only need the existence of such $P$ (justified by Theorem \ref{t1j97}) not its exact value. Let $M<P$ be some moderately large number and independently choose the mesh size $N$. We then compute the two matrices $A_{M,t}$ and $B_{M,t}$ from \eqref{eq: matrices truncated}. This time we use the full formula and the contribution to $\rho^I_t(0)$  can be estimated by
 \begin{equation}\label{eq: correction}
 \corr_{M}:=e^{\frac{2}{k e_{M+1}}}\sum_{i= M+1}^P{{e_i}^{-2t}}\le e^{\frac{2}{k e_{M+1}}}\sum_{i= M+1}^\infty{{e_i}^{-2t}},
 \end{equation}
which is  independent of $P$. Hence, by the method described in the previous section we obtain $ \underline{h}_{M},\overline{h}_{M}$ such that
$$\underline{h}_{M} \leq \dim_{\cH}(J_{F_P}) \leq \overline{h}_{M},$$
and consequently
$$\underline{h}_{M} \leq \dim_{\cH}(J_{E}) \leq \overline{h}_{M}+\ve.$$
This strategy has several advantages. First of all, there is no need to estimate $\dim_{\cH} (J_{E})-\dim_{\cH} (J_{F_M})$, and secondly $M$ can be taken rather moderately large. However, the main disadvantage is that it is not clear how $\overline{h}_{M}$ improves if $M$ and $N$ increase. Again, to rigorously verify the accuracy of $\underline{h}_{M}$ and $\overline{h}_{M}$ we use the interval arithmetic package \texttt{IntLab} \url{http://www.ti3.tu-harburg.de/rump/intlab/}.
  
At the end of this section we include Tables \ref{table: approach 1} and \ref{table: approach 2} which collect the dimension estimates for all our examples. We now provide some estimates of the quantities $\dim_{\cH}(J_E)-\overline{h}_M$ and $\corr_M$ for various examples of infinite alphabets $E \subset \N$.

\subsection{Odd integers.}
  
From the set $E_{odd} = \{1,3,5,\dots\}$ we select a finite subset $F_{odd}^M = \{1,3,5,\dots,2M+1\}$ for some large $M$.  Let 
$$\underline{h}_M \leq h_{F_{odd}^M} \leq \overline{h}_M$$ where, recalling Theorem \ref{t1j97}, $h_{F_{odd}^M}=\dim_{\cH}(J_{F_{odd}^M})$. Note that $\dim_{\cH}(J_{E_{odd}} )\geq \underline{h}_M$. Using the integral test we obtain
$$
\sum_{E_{odd} \stm F^M_{odd}} \, \|\f_e'\|^{h_M}_{\infty}\le \sum_{n\geq M+1}\frac{1}{(2n+1)^{2\underline{h}_M}}\le \int_{M}^\infty\frac{dx}{(2x+1)^{2\underline{h}_M}}=\frac{1}{2(2\underline{h}_M-1)(2M+1)^{2\underline{h}_M-1}}.
$$
From Proposition \ref{prop: Lyapunov} we have  $\chi_{E_{odd}}\geq  2\ln{\left(\frac{1+\sqrt{5}}{2}\right)}$ and since $1\in E_{odd}$ we have $K_{E_{odd}}=4$. As a result for an arbitrary $M$, Theorem \ref{hein} implies that
$$
\dim_{\cH} (J_{E_{odd}})-\dim_{\cH} (J_{F_{odd}^M})\le \frac{4^{\overline{h}_M}}{4\ln{\left(\frac{1+\sqrt{5}}{2}\right)}(2\underline{h}_M-1)(2M+1)^{2\underline{h}_F-1}}.
$$
Similarly, using the integral test, we also obtain from \eqref{eq: correction} (with $k=1$ and $h\le 1$),
$$
\corr_M = e^{\frac{2h}{2M+3}}\sum_{i\geq M+1}{{(2i+1)}^{-2h}}\le e^{\frac{2}{2M+3}}\int_M^\infty\frac{dx}{(2x+1)^{2h}}\le \frac{e^{\frac{1}{M}}}{2(2{h}-1)(2M+1)^{2{h}-1}}.
$$

\subsection{Even integers.}
This  case was previously considered in \cite{HeinemannSM_UrbanskiM_2002a}.  From the set $E_{even} = \{2,4,6,\dots\}$ we select a finite subset $F_{even}^M = \{2,4,6,\dots, 2M\}$ for some large $M$. Let 
$$\underline{h}_M \leq h_{F_{even}^M} \leq \overline{h}_M.$$
Using the integral test we obtain
$$
\sum_{E_{even} \stm F^M_{even}} \, \|\f_e'\|^{h_M}_{\infty}\le \sum_{n\geq M+1}\frac{1}{(2n)^{2\underline{h}_M}}\le \int_{M}^\infty\frac{dx}{(2x)^{2\underline{h}_M}}=\frac{1}{2(2\underline{h}_M-1)(2M)^{2\underline{h}_M-1}}.
$$
Propositions \ref{prop: distortion} and \ref{prop: Lyapunov} imply respectively that  $$K_{E_{even}}\le e^{2/3} \mbox{ and }\chi_{E_{even}}\geq 2 \ln{(1+\sqrt{2})}.$$  Therefore for an arbitrary $M$ we obtain the estimate
$$
\dim_{\cH} (J_{E_{even}})-\dim_{\cH} (J_{F})\le \frac{e^{2\overline{h}_M/3}}{4\ln{(1+\sqrt{2})}(2\underline{h}_M-1)(2M)^{2\underline{h}_M-1}}.
$$
Similarly, using the integral test, we also obtain from \eqref{eq: correction}  (with $k=2$ and $h\le 1$),
$$
\corr_M= e^{\frac{2h}{2(2M+2)}}\sum_{i\geq M+1}{{(2i)}^{-2h}}\le e^{\frac{1}{2M+2}}\int_M^\infty\frac{dx}{(2x)^{2h}}\le \frac{e^{\frac{1}{2M}}}{2(2{h}-1)(2M)^{2{h}-1}}.
$$

\subsection{Mod 3 integers.}
This example is very similar to the odd and even cases and  we omit some details. 
\subsubsection{1 Mod 3.}
Let $E_{1mod3}=\{1,4,7,\dots\}$. For $M \in \N$ let $F^M_{1mod3}=\{1,4,7,\dots, 3M+1\}$, and 
$$\underline{h}_M \leq h_{F^M_{1mod3}} \leq \overline{h}_M.$$
Propositions \ref{prop: distortion}  and \ref{prop: Lyapunov} imply that $\chi_{E_{1mod3}}\geq  2\ln{\left(\frac{1+\sqrt{5}}{2}\right)}$ , $K_{E_{1mod3}}=4$. Therefore,
$$
\sum_{E_{1mod3} \stm F^M_{1mod3}} \, \|\f_e'\|^{h_M}_{\infty}\le \frac{1}{3(2\underline{h}_M-1)(3M+1)^{2\underline{h}_M-1}}.
$$
and Theorem \ref{hein} implies that
$$
\dim_{\cH} (J_{E_{1mod3}})-\dim_{\cH} (J_{F^M_{1mod3}})\le \frac{4^{\overline{h}_M}}{6\ln{\left(\frac{1+\sqrt{5}}{2}\right)}(2\underline{h}_M-1)(3M+1)^{2\underline{h}_M-1}}.
$$
Similarly, using the integral test, we also obtain from \eqref{eq: correction}  (with $k=1$ and $h\le 1$),
$$
\corr_M = e^{\frac{2h}{3M+1}}\sum_{i\geq M+1}{{(3i+1)}^{-2h}}\le e^{\frac{2}{3M+1}}\int_M^\infty\frac{dx}{(3x+1)^{2h}}\le \frac{e^{\frac{2}{3M}}}{3(2{h}-1)(3M+1)^{2{h}-1}}.
$$

\subsubsection{ 2 Mod 3.}
Let $E_{2mod3}=\{2,5,8,\dots\}$. For $M \in \N$ let  $F^M_{2mod3}=\{2,5,8,\dots, 3M+2\}$ and 
$$\underline{h}_M \leq h_{F^M_{2mod3}} \leq \overline{h}_M.$$
Propositions \ref{prop: distortion}  and \ref{prop: Lyapunov} imply that $\chi_{E_{2mod3}}\geq  2\ln{(1+\sqrt{2})}$ , $K_{E_{2mod3}} \le e^{2/3}$. Therefore
$$
\sum_{E_{2mod3} \stm F^M_{2mod3}} \, \|\f_e'\|^{h_M}_{\infty}\le \frac{1}{3(2\underline{h}_M-1)(3M+2)^{2\underline{h}_M-1}},
$$
and Theorem \ref{hein} implies that
$$
\dim_{\cH} (J_{E_{2mod3}})-\dim_{\cH} (J_{F^M_{2mod3}})\le \frac{e^{2\overline{h}_M/3}}{6\ln{(1+\sqrt{2})}(2\underline{h}_M-1)(3M+2)^{2\underline{h}_M-1}}.
$$
Similarly, using the integral test, we also obtain from \eqref{eq: correction}  (with $k=2$ and $h\le 1$),
$$
\corr_M= e^{\frac{h}{3M+2}}\sum_{i\geq M+1}{{(3i+2)}^{-2h}}\le e^{\frac{1}{3M+2}}\int_M^\infty\frac{dx}{(3x+2)^{2h}}\le \frac{e^{\frac{1}{3M}}}{3(2{h}-1)(3M+2)^{2{h}-1}}.
$$
\subsubsection{ 0 Mod 3.}
Let $E_{0mod3}=\{3,6,9,\dots\}$.  For $M \in \N$ let $F^M_{0mod3}=\{3,6,9,\dots, 3M+3\}$ and 
$$\underline{h}_M \leq h_{F^M_{0mod3}} \leq \overline{h}_M.$$
Propositions \ref{prop: distortion}  and \ref{prop: Lyapunov} imply that
$\chi_{E_{0mod3}}\geq  2\ln{\left(\frac{3+\sqrt{13}}{2}\right)}$ , $K_{E_{0mod3}}\le e^{1/4}$. Therefore
$$
\sum_{E_{0mod3} \stm F^M_{0mod3}} \, \|\f_e'\|^{h_M}_{\infty}\le \frac{1}{3(2\underline{h}_M-1)(3M+3)^{2\underline{h}_M-1}}.
$$
and Theorem \ref{hein} implies that
$$
\dim_{\cH} (J_{E_{0mod3}})-\dim_{\cH} (J_{F^M_{0mod3}})\le \frac{e^{\overline{h}_M/4}}{6\ln{\left(\frac{3+\sqrt{13}}{2}\right)}(2\underline{h}_M-1)(3M+3)^{2\underline{h}_M-1}}.
$$
Similarly, using the integral test, we also obtain from \eqref{eq: correction}  (with $k=3$ and $h\le 1$),
$$
\corr_M = e^{\frac{2h}{3(3M+3)}}\sum_{i\geq M+1}{{(3i+3)}^{-2h}}\le e^{\frac{2}{9M}}\int_M^\infty\frac{dx}{(3x+3)^{2h}}\le \frac{e^{\frac{2}{9M}}}{3(2{h}-1)(3M+3)^{2{h}-1}}.
$$

\subsection{Primes.}

We denote $E_{prime} = \{2,3,5,\dots\}=\{p(1),p(2),p(3),\dots\}$, where $p(n)$ is the prime function that gives you the n-th prime. We select a finite subset $F_{prime}^M = \{p(1),p(2),p(3),\dots,p(M-1)\}$ for some large $M$. Let 
$$\underline{h}_M \leq h_{F_{prime}^M} \leq \overline{h}_M.$$ It follows from  \cite[Lemma 1 and Theorem 3]{DusartP_1999} that for $n\geq 6$,
$$
n(\ln{n}+\ln{\ln{n}}-1)\le p(n) \le n(\ln{n}+\ln{\ln{n}}).
$$
Using this estimate and  the integral test we obtain
$$
\begin{aligned}
\sum_{E_{prime} \stm F^M_{prime}} \, \|\f_e'\|^{h_M}_{\infty}&\le \sum_{n\geq M}\frac{1}{p(n)^{2\underline{h}_M}}\le \sum_{n\geq M} \frac{1}{n^{2\underline{h}_M}(\ln{n}+\ln{\ln{n}}-1)^{2\underline{h}_M}}\\
& \le \frac{1}{(\ln{M}+\ln{\ln{M}}-1)^{2\underline{h}_M}}\sum_{n\geq M} \frac{1}{n^{2\underline{h}_M}}\\
& \le \frac{1}{(\ln{M}+\ln{\ln{M}}-1)^{2\underline{h}_M}}\int_{M-1}^\infty \frac{dx}{x^{2\underline{h}_M}}\\
& = \frac{1}{(\ln{M}+\ln{\ln{M}}-1)^{2\underline{h}_M}(2\underline{h}_M-1)(M-1)^{2\underline{h}_M-1}}.
\end{aligned}
$$
From Propositions \ref{prop: distortion} and \ref{prop: Lyapunov} it follows respectively that 
$$\chi_{E_{prime}}\geq  2\ln{(1+\sqrt{2})} \mbox{ and  }K_{E_{prime}}\le e^{2/3}.$$ Hence Theorem \ref{hein} implies that for an arbitrary $M$,
\begin{equation*}
\begin{split}
\dim_{\cH} (J_{E_{prime}})&-\dim_{\cH} (J_{F^M_{prime}}) \\
&\le \frac{e^{2\overline{h}_M/3}}{2\ln{(1+\sqrt{2})}{(\ln{M}+\ln{\ln{M}}-1)^{2\underline{h}_M}(2\underline{h}_M-1)(M-1)^{2\underline{h}_M-1}}}.
\end{split}
\end{equation*}
Similarly, using the integral test, we also obtain from \eqref{eq: correction}  (with $k=2$ and $h\le 1$),
$$
\begin{aligned}
\corr_M= e^{\frac{2h}{k p(M)}}\sum_{i\geq M}p(i)^{-2h}&\le \frac{e^{\frac{1}{M\ln{M}}}}{(\ln{M}+\ln{\ln{M}}-1)^{2\underline{h}_M}}\int_{M-1}^\infty\frac{dx}{x^{2h}}\\
&\le \frac{e^{\frac{1}{M\ln{M}}}}{(\ln{M}+\ln{\ln{M}}-1)^{2\underline{h}_M}(2h-1)(M-1)^{2h-1}}.
\end{aligned}
$$

\subsection{Squares}
From the set $E_{square} = \{1,4,9,\dots\}$ we select a finite subset $F_{square}^M = \{1,4,9,\dots, M^2\}$ for some large $M$.  Let 
$$\underline{h}_M \leq h_{F_{square}^M} \leq \overline{h}_M.$$ Using the integral test we obtain
$$
\sum_{E_{square} \stm F^M_{square}} \, \| \f_e'\|^{h_M}_{\infty}\le \sum_{n\geq M+1}\frac{1}{n^{4\underline{h}_M}}\le \int_{M}^\infty\frac{dx}{x^{4\underline{h}_M}}=\frac{1}{(4\underline{h}_M-1)M^{4\underline{h}_M-1}}.
$$
Propositions \ref{prop: distortion} and \ref{prop: Lyapunov} imply respectively that $$K_{E_{square}}=4\mbox{ and  }\chi_{\tilde{\mu}_h}\geq 2\ln{\left(\frac{1+\sqrt{5}}{2}\right)}.$$As a result for an arbitrary $M$ we obtain the estimate
$$
\dim_{\cH} (J_{E_{square}})-\dim_{\cH} (J_{F_{square}^M})\le \frac{4^{\overline{h}_M}}{2\ln{\left(\frac{1+\sqrt{5}}{2}\right)}(4\underline{h}_M-1)M^{4\underline{h}_M-1}}.
$$
Similarly, using the integral test, we also obtain from \eqref{eq: correction}  (with $k=1$ and $h\le 1$),
$$
\corr_M= e^{\frac{2h}{(M+1)^2}}\sum_{i\geq M+1}{{i}^{-4h}}\le e^{\frac{2}{M^2}}\int_M^\infty\frac{dx}{x^{4h}}\le \frac{e^{\frac{2}{M^2}}}{(4{h}-1)M^{4{h}-1}}.
$$

\subsection{Powers.}
Now we consider the case when $E$ consists of the powers of $m_0$, for some $m_0\in \N$.  If $1 \in E$, i.e. if the zero power is included, we let $E_{power+}=\{1,m_0,m_0^2, m_0^3, \dots\}$ and we select a finite subset $F_{power+}^M = \{1,m_0,m_0^2,\dots,m_0^M\}$ for some $M$.  Let 
$$\underline{h}_M \leq h_{F_{power+}^M} \leq \overline{h}_M.$$ By the properties of the geometric sums 
$$
\sum_{E_{power+} \stm F_{power+}} \, \|\f_e'\|^{h_M}_{\infty}\le \sum_{n\geq M+1}\frac{1}{m_0^{2n\underline{h}_M}}=\frac{1}{m_0^{2M\underline{h}_M}-1},
$$
 As in the case of odd numbers we have $\chi_{E_{power+}}\geq   2\ln{\left(\frac{1+\sqrt{5}}{2}\right)}$ and as a result in this case
$$
\dim_{\cH} (J_{E_{power+}})-\dim_{\cH} (J_{F_{power+}^M})\le \frac{4^{\overline{h}_M}}{2\ln{\left(\frac{1+\sqrt{5}}{2}\right)}\left({m_0^{2M\underline{h}_M}-1}\right)}.
$$

If $1$ is not included then we denote $E_{power+}=\{m_0,m_0^2, m_0^3, \dots\}$ and we select a finite subset $F_{power}^M = \{m_0,m_0^2,\dots,m_0^M\}$ for some $M$.  Propositions \ref{prop: distortion} and \ref{prop: Lyapunov} imply respectively that $$K_{E_{power}}\le  \exp \left( \frac{2}{m_0^2-1}\right) \mbox{ and } \chi_{E_{power}}\geq  2 \ln \left( \frac{m_0+\sqrt{m_0^2+4}}{2} \right).$$ Therefore 
$$
\dim_{\cH} (J_{E_{power}})-\dim_{\cH} (J_{F_{power}^M})\le \frac{\exp \left( \frac{2\overline{h}_M}{m_0^2-1}\right)}{2 \ln \left( \frac{m_0+\sqrt{m_0^2+4}}{2} \right)\left({m_0^{2M\underline{h}_M}-1}\right)}.
$$
In both cases the error is very negligible even for moderate $M$ and the quality of the approximation depends only one the good approximation of $\underline{h}_M$ and $\overline{h}_M$, i.e. on $N$ which is the number of  nodes in $[0,1]$.
Similarly, using the properties of geometric series, we also obtain from \eqref{eq: correction}  (with $k=1$ or $k=m0$ and $h\le 1$),
$$
\corr_M= e^{\frac{2h}{k m_0^M}}\sum_{i\geq M+1}\frac{1}{m_0^{2i{h}}}\le \frac{e^{\frac{2}{m_0^M}}}{m_0^{2M{h}}-1}.
$$

\subsection{Lacunary sequences}
We now consider the lacunary sequence, $E_{lac}=\{2^{n^2}\}_{n=1}^\infty$. We select a finite subset $F_{lac}^M = \{2,2^{2^2},2^{3^2},\dots,2^{M^2}\}$ for some $M$ and we let
$$\underline{h}_M \leq h_{F_{lac}^M} \leq \overline{h}_M.$$
Since the decay is very fast we only use a crude estimate based on elementary properties of geometric series
$$
\sum_{E_{lac} \stm F^M_{lac}} \, \|\f_e'\|^{h_M}_{\infty}\le \sum_{n\geq M+1}2^{-n^2\underline{h}_M}=2^{-(M+1)^2\underline{h}_M} \sum_{n=0}^\infty 2^{-n^2\underline{h}_M}\le 2^{-(M+1)^2\underline{h}_M}\frac{2^{\underline{h}_F}}{2^{\underline{h}_M}-1}.
$$
Propositions \ref{prop: distortion} and \ref{prop: Lyapunov} imply respectively that $K_{E_{lac}}\le e^{2/3}$ and $\chi_{E_{lac}}\geq 2\ln{(1+\sqrt{2})}$, therefore 
$$
\dim_{\cH} (J_{E_{lac}})-\dim_{\cH} (J_{F^M_{lac}})\le \left(\frac{2^{\underline{h}_M}}{2^{\underline{h}_M}-1}\right)\frac{e^{\frac{2\overline{h}_M}{3}}}{2^{(M+1)^2\underline{h}_M}2\ln{(1+\sqrt{2})}}.
$$
Note that the error, i.e. the right hand side, is very small even for moderately large $M$.
Similarly from \eqref{eq: correction} we have that $
\corr_M\le 2^{-M^2{h}} .
$


\begin{table}[ht]
\caption{Hausdorff dimension estimates using strategy 1}
\centering
\vspace{.1in}
\begin{tabular}{|l|c|c|c|c|}
\hline
Subsystem           & Hausdorff dim. interval & accuracy & M & N \\
\hline
$E_{odd}=\{1,3,5,\dots\}$ & $[0.821160, 0.821223]$  & $6.3e-5$ & $1e+7$ & $100$ \\
\hline
$E_{even}=\{2,4,6,\dots\}$ & $[0.71936, 0.72001]$ & $6.5e-4$ & $1e+7$ & $100$ \\
\hline
$E_{1mod3}=\{1,4,7,\dots\}$ & $[0.74352, 0.74398]$ & $4.6e-4$ & $1e+7$ & $100$ \\
\hline
$E_{2mod3}=\{2,5,8,\dots\}$ & $[0.66490, 0.66795]$ & $3.1e-3$ & $1e+7$ & $100$ \\
\hline
$E_{0mod3}=\{3,6,9,\dots\}$ & $[0.63956, 0.64916]$ & $9.6e-3$  & $1e+7$ & $100$ \\
\hline
$E_{prime}=\{2,3,5,\dots\}$ & $[0.67507, 0.67519]$ & $1.2e-4 $ & $2.6e+7$ & $100$ \\
\hline
$E_{square}=\{1,2^2,3^2,\dots\}$ & $[0.59825568, 0.59825603]$ & $3.5e-7 $ & $1e+5$ & $1000$ \\
\hline
$E_{power2+}=\{1,2,2^2,\dots\}$ & $[0.7339041186, 0.7339041234]$ & $ 4.8e-9$ & $60$ & $6000$ \\
\hline
$E_{power2}=\{2,2^2,2^3,\dots\}$ & $[0.4720715327, 0.4720715331]$ & $ 4.0e-10$ & $60$ & $6000$ \\
\hline
$E_{power3+}=\{1,3,3^2,\dots\}$ & $[0.5627284510, 0.5627284539]$ & $ 2.9e-9$ & $50$ & $6000$ \\
\hline
$E_{power3}=\{3,3^2,3^3,\dots\}$ & $[0.3105296859, 0.3105296860]$ & $ 1.0e-10$  & $50$ & $6000$ \\
\hline
$E_{lac}=\{2,2^{2^2},2^{3^2},\dots\}$ & $[0.2362689121, 0.2362689123]$ & $ 2.0e-10$  & $12$ & $5000$ \\
\hline
\end{tabular}
\label{table: approach 1}
\end{table}

\begin{table}[ht]
\caption{Hausdorff dimension estimates using strategy 2}
\centering
\vspace{.1in}
\begin{tabular}{|l|c|c|c|c|}
\hline
Subsystem           & Hausdorff dim. interval & accuracy & M & N \\
\hline
$E_{odd}=\{1,3,5,\dots\}$ & $[0.821143, 0.821177]$  & $3.4e-05 $ & $1e+6$ & $200$ \\
\hline
$E_{even}=\{2,4,6,\dots\}$ & $[0.719109, 0.719498]$ & $3.9e-04$ & $1e+6$ & $200$ \\
\hline
$E_{1mod3}=\{1,4,7,\dots\}$ & $[0.743404, 0.743586]$ & $1.8e-04$ & $1e+6$ & $200$ \\
\hline
$E_{2mod3}=\{2,5,8,\dots\}$ & $[0.664488, 0.665462]$ & $9.7e-04$ & $2e+6$ & $200$ \\
\hline
$E_{0mod3}=\{3,6,9,\dots\}$ & $[0.638856, 0.640725]$ & $1.9e-03$  & $2e+6$ & $200$ \\
\hline
$E_{prime}=\{2,3,5,\dots\}$ & $[0.675044, 0.675228]$ & $1.8e-04$ & $5.7e+6$ & $200$ \\
\hline 
$E_{square}=\{1,4,9,\dots\}$ & $[0.59825575,0.59825579]$ & $4.0e-08 $ & $1e+5$ & $5000$ \\
\hline
$E_{power2+}=\{1,2,4,\dots\}$ & $[0.73390397, 0.73390415]$ & $ 1.8e-07 $ & $50$ & $1000$ \\
\hline
$E_{power2}=\{2,4,8,\dots\}$ & $[0.472071525, 0.472071536]$ & $1.1e-08$ & $50$ & $1000$ \\
\hline
$E_{power3+}=\{1,3,9,\dots\}$ & $[0.56272836, 0.56272847]$ & $ 1.1e-07$ & $40$ & $1000$ \\
\hline
$E_{power3}=\{3,9,27,\dots\}$ & $[0.310529684, 0.310529686]$ & $ 2.0e-09$  & $40$ & $1000$ \\
\hline
$E_{lac}=\{2,2^{2^2},2^{3^2},\dots\}$ & $[0.236268909, 0.236268937]$ & $ 2.8e-08 $  & $10$ & $1000$ \\
\hline
\end{tabular}
\label{table: approach 2}
\end{table}

\section{Dimension Spectrum of Continued Fractions Subsystems}
\label{sec:spec}
In this section we are going to study the dimension spectrum of continued fractions generated by various infinite subsets of the natural numbers. We start by presenting several general criteria related to the dimension spectrum of continued fractions.

\subsection{Dimension spectrum criteria for continued fractions}
For $r  \geq 0$ we consider the sequences
$$\a_n(r)= \left( \frac{1}{n+1}\right)^{2r} \mbox{ and } \beta_n(r)=\left( \frac{2}{n+2}\right)^{2r}, \quad \mbox{ for }n \in \N.$$
\begin{propo} 
\label{compsums}
Let $e_n : \N \ra \N$ be strictly increasing and let $t \geq s > 0$. If for some $k \in \N$
$$\sum_{n=k+1}^\infty \a_{e_n} (t) \geq \beta_{e_k}(t), \quad \text{then}\quad \sum_{n=k+1}^\infty \a_{e_n} (s) \geq \beta_{e_k}(s).$$
\end{propo}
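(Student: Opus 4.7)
The plan is to rescale the inequality so both sides can be compared term-by-term as powers of quantities bounded above by $1$, and then exploit the fact that for $x\in(0,1)$ the function $t\mapsto x^{2t}$ is decreasing.

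First, I would rewrite the hypothesis and the desired conclusion in the more symmetric form
\begin{equation*}
\sum_{n=k+1}^\infty r_n^{2t}\;\ge\;1 \qquad\text{vs.}\qquad \sum_{n=k+1}^\infty r_n^{2s}\;\ge\;1,
\end{equation*}
where
\begin{equation*}
r_n := \frac{(e_k+2)/2}{e_n+1}, \qquad n\ge k+1.
\end{equation*}
The hypothesis $\sum_{n\ge k+1}\alpha_{e_n}(t)\ge \beta_{e_k}(t)$ is obtained by multiplying the ``$t$''–inequality by $((e_k+2)/2)^{2t}$, and the conclusion is obtained analogously with $s$ in place of $t$.

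Next I would establish that $r_n\le 1/2$ for every $n\ge k+1$. Since $e_n\colon\N\to\N$ is strictly increasing, $e_{k+1}\ge e_k+1$, and therefore for every $n\ge k+1$ one has $e_n+1\ge e_{k+1}+1\ge e_k+2$. Dividing $(e_k+2)/2$ by $e_n+1\ge e_k+2$ gives $r_n\le 1/2<1$, which is the crucial estimate.

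Finally, since $r_n\in(0,1)$ and $t\ge s>0$, one has $r_n^{2t}\le r_n^{2s}$ for every $n\ge k+1$. Summing term-by-term yields
\begin{equation*}
\sum_{n=k+1}^\infty r_n^{2s}\;\ge\;\sum_{n=k+1}^\infty r_n^{2t}\;\ge\;1,
\end{equation*}
where the last inequality is exactly the hypothesis after rescaling. Multiplying through by $\bigl((e_k+2)/2\bigr)^{-2s}$ recovers $\sum_{n\ge k+1}\alpha_{e_n}(s)\ge\beta_{e_k}(s)$, completing the proof.

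There is essentially no obstacle here; the only subtle point is to notice that the rescaled ratios $r_n$ are automatically strictly less than $1$, which is a direct consequence of the strict monotonicity of $(e_n)$ together with the factor $2$ appearing in the numerator of $\beta$. Without that factor $2$ (i.e.\ if $\beta_n(r)$ were $1/(n+2)^{2r}$) one would only get $r_n\le 1$ with possible equality when $e_{k+1}=e_k+1$, and the monotonicity argument would still work but the comparison would become an equality in the boundary case.
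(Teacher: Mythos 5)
Your proof is correct and takes essentially the same approach as the paper's: both normalize by $\beta_{e_k}$, observe that each resulting ratio $\bigl(\a_{e_n}/\beta_{e_k}\bigr)^{2r}$ is a power of a quantity less than $1$ (you obtain the slightly sharper bound $\le 1/2$, the paper only notes $<1$, but both suffice), conclude that each term is nonincreasing in the exponent, and sum. No substantive differences.
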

\begin{proof} For simplicity let $\a_{e_n}:=\a_{e_n}(1/2)$ and $\beta_{e_n}:=\beta_{e_n}(1/2)$. If $n>k$ then $e_n>e_k$ and
$$\frac{\a_{e_n}}{\beta_{e_k}}=\frac{e_k+2}{2(e_n+1)}<1.$$
Hence the functions
$$\f_n(r)= \frac{\a_{e_n}(r)}{\beta_{e_k}(r)}=\left( \frac{\a_{e_n}}{\beta_{e_k}}\right)^{2r}$$
are decreasing for all $n \geq k+1$ and consequently the function
$$\f(r)=\sum_{n=k+1}^\infty \frac{ \a_{e_n} (r)}{\beta_{e_k}(r)}$$ is also decreasing. Therefore if $t \geq s>0$
$$\sum_{n=k+1}^\infty \frac{ \a_{e_n} (s)}{ \beta_{e_k}(s)} \geq \sum_{n=k+1}^\infty \frac{ \a_{e_n} (t)}{\beta_{e_k}(t)} \geq 1.$$
The proof is complete.
\end{proof}
Combining \cite[Lemma 4.3]{KessebohmerM_ZhuS_2006}, \cite[Theorem 6.4]{ChousionisV_LeykekhmanD_UrbanskiM_2018a} and Proposition \ref{compsums} we have the following.
\begin{propo}
\label{keypropo}
Let $E=\{e_n\}_{n \in \N}$ be an increasing sequence of natural numbers and denote by $I(m)= \{e_i\}_{i=1}^m, m \in \N,$ the initial segments of $E$. If there exist $t,s$ such that $0 \leq t \leq s \leq \dim_{\cH}(\cf_E)$ and some $m \in \N$ such that
\begin{enumerate}[label=(\roman*)]
\item $P_{I(m)}(t) \leq 0$,
\item \label{sumcond}$\sum_{n=k+1}^\infty \a_{e_n}(s) \geq \beta_{e_k}(s)$ for all $k \geq m+1$,
\end{enumerate}
then $[t,s] \subset DS (\cf_{E})$.
 \end{propo}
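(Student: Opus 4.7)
The plan is to construct, for every $u\in[t,s]$, a subset $F_u\subset E$ with $I(m)\subset F_u$ and $\dim_\cH(J_{F_u})=u$; this immediately gives $[t,s]\subset DS(\cf_E)$. The construction will be greedy: starting from $I(m)$, I add the letters $e_{m+1},e_{m+2},\dots$ of $E$ one at a time, keeping each one whenever doing so does not push the topological pressure evaluated at the target $u$ above zero, and discarding it otherwise. This mirrors the approach of Kesseb\"ohmer and Zhu in their proof of the Texan conjecture and the refinement of \cite[Theorem 6.4]{ChousionisV_LeykekhmanD_UrbanskiM_2018a}.

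The first step is to upgrade hypothesis (ii) from the single value $s$ to every $u\in[t,s]$. Fix $u\in[t,s]$; since $s\ge u$, Proposition \ref{compsums} applied with $(s,u)$ in place of $(t,s)$ gives
\[
\sum_{n=k+1}^\infty \a_{e_n}(u) \ \ge\ \beta_{e_k}(u)\qquad\text{for every } k\ge m+1,
\]
so the tail-versus-single-letter comparison that drives the construction is valid at the target value $u$.

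Next I would realize the greedy construction. Set $F_u^{(m)}=I(m)$ and, for $k\ge m+1$,
\[
F_u^{(k)}=\begin{cases} F_u^{(k-1)}\cup\{e_k\} & \text{if } P_{F_u^{(k-1)}\cup\{e_k\}}(u)\le 0,\\ F_u^{(k-1)} & \text{otherwise,}\end{cases}
\]
and put $F_u=\bigcup_{k\ge m}F_u^{(k)}$. Since $u\ge t$ and the pressure is decreasing in the parameter (Proposition \ref{p2j85}), $P_{I(m)}(u)\le P_{I(m)}(t)\le 0$, so the recursion starts from a consistent state. By construction $P_{F_u}(u)\le 0$, and Theorem \ref{t1j97} immediately delivers the easy inequality $\dim_\cH(J_{F_u})\le u$.

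The hard part, and the place where hypothesis (ii) becomes decisive, is the matching lower bound $\dim_\cH(J_{F_u})\ge u$. Here one invokes the quantitative machinery of \cite[Lemma 4.3]{KessebohmerM_ZhuS_2006}, extended in \cite[Theorem 6.4]{ChousionisV_LeykekhmanD_UrbanskiM_2018a}: the oscillation of the pressure induced by adding or removing a single letter $e_k$ is, via the derivative estimate $\|\f'_{e_k}\|_\infty\le 1/e_k^2$ and the distortion bound of Proposition \ref{prop: distortion}, controlled by a constant multiple of $\a_{e_k}(u)$, while the uniform version of (ii) obtained in the previous step ensures that this oscillation is dominated by the collective contribution of infinitely many later letters still accessible in $E$. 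A careful accounting of these discards, together with the continuity of the pressure in the parameter, forces $P_{F_u}(u)=0$ exactly, so that $\dim_\cH(J_{F_u})=u$ by Theorem \ref{t1j97}. Letting $u$ range over $[t,s]$ finishes the proof.
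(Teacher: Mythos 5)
Your proposal matches the paper's intended argument. The paper offers no detailed proof of this proposition---it is stated with the one-line justification that it follows by combining Lemma 4.3 of Kesseb\"ohmer--Zhu, Theorem 6.4 of \cite{ChousionisV_LeykekhmanD_UrbanskiM_2018a}, and Proposition \ref{compsums}. Your write-up correctly reconstructs how these ingredients fit together: Proposition \ref{compsums} is used exactly as you describe, to transfer hypothesis (ii) from the value $s$ down to any $u\in[t,s]$, after which the greedy set-building scheme and the single-letter pressure-perturbation estimates from the two cited results pin the pressure of the constructed subsystem to zero. At the one genuinely technical step---showing the accumulation of discarded letters forces $P_{F_u}(u)=0$---you defer to the same references the paper does, so the level of rigor is comparable. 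The only point worth flagging is that for $u\le\theta_{\cf_E}=1/2$ the greedy process necessarily terminates with a finite $F_u$ and the argument via continuity of the pressure needs the separate (and standard) observation that $[0,1/2)\subset DS(\cf_E)$ for any infinite alphabet; the paper elsewhere invokes \cite[Theorem 6.3]{MU2} for this, and you would need the same input to close that edge case.
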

We state as a corollary a special case of the previous proposition.
\begin{corollary}
\label{keycoro}
Let $E=\{e_n\}_{n \in \N}$ be an increasing sequence of natural numbers. If there exists $s \geq 0$ such that
$$\sum_{n=k+1}^\infty \a_{e_n}(s) \geq \beta_{e_k}(s)$$ for all $k \geq 2$,
then $[0,\min\{s, \dim_{\cH}(J_E)\}] \subset DS (\cf_{E})$.
 \end{corollary}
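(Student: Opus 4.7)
The plan is to derive Corollary \ref{keycoro} as a direct specialization of Proposition \ref{keypropo} with the parameter choice $t = 0$ and $m = 1$. With $m = 1$ the initial segment is $I(1) = \{e_1\}$, a single-map system for which $Z_n(0) = 1$ for all $n$; hence $P_{I(1)}(0) = 0$, so hypothesis (i) of Proposition \ref{keypropo} is automatically satisfied at $t = 0$.

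To handle hypothesis (ii), I would set $s^\ast := \min\{s,\dim_{\cH}(J_E)\}$. If $s^\ast = s$ there is nothing to do: the summation hypothesis of the corollary is exactly condition (ii) of Proposition \ref{keypropo} for every $k \geq m+1 = 2$. If instead $s^\ast = \dim_{\cH}(J_E) < s$, I would invoke Proposition \ref{compsums}, with the roles of its $t$ and $s$ played respectively by our $s$ and $s^\ast$; assuming $s^\ast > 0$, this transfers the inequality $\sum_{n \geq k+1} \a_{e_n}(s) \geq \beta_{e_k}(s)$ to the corresponding inequality at $s^\ast$, again valid for all $k \geq 2$.

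Having verified both hypotheses of Proposition \ref{keypropo} with the triple $(t,s,m) = (0, s^\ast, 1)$, and since $0 \leq s^\ast \leq \dim_{\cH}(J_E)$, the proposition immediately yields $[0, s^\ast] \subset DS(\cf_E)$, which is the claim. The degenerate case $s^\ast = 0$ is handled separately and trivially, since any singleton subsystem $\{\f_e\}$ has a one-point limit set and therefore contributes dimension zero to $DS(\cf_E)$. There is no real obstacle in this derivation once Propositions \ref{keypropo} and \ref{compsums} are in hand; the only substantive point is recognizing that the monotonicity provided by Proposition \ref{compsums} is precisely what allows the passage from the given parameter $s$ to the capped value $s^\ast$ without losing the summation bound.
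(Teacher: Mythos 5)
Your derivation is correct and takes the same route the paper intends: Corollary \ref{keycoro} is presented there without separate proof, merely as ``a special case of the previous proposition,'' and your explicit specialization $(t,s,m)=(0,\min\{s,\dim_{\cH}(J_E)\},1)$, together with the monotonicity supplied by Proposition \ref{compsums} to pass from $s$ to the capped value and the observation that $P_{I(1)}(0)=0$, fills in precisely the details the paper leaves implicit.
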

We are also going to use the following criterion which follows from  \cite[Proposition 6.17]{ChousionisV_LeykekhmanD_UrbanskiM_2018a}. Recall that if $E \subset \N$, we denote best distortion constant of $\cf_E$ by $K_E:=K_{\cf_E}$.
\begin{propo}
\label{keypropo2}
Let $E=\{e_n\}_{n \in \N}$ be an increasing sequence of natural numbers and denote by $I(m)= \{e_i\}_{i=1}^m, m \in \N,$ the initial segments of $E$. If there exist $t,s$ such that $0 \leq t \leq s \leq \dim_{\cH}(\cf_E)$ and some $m \in \N$ such that
\begin{enumerate}[label=(\roman*)]
\item $P_{I(m)}(t) \leq 0$,
\item \label{sumcond}$\sum_{n=k+1}^\infty \| \f'_{e_n}\|^{s}_\infty \geq K_E^{2s} \| \f'_{e_k}\|^{s}_\infty$ for all $k \geq m+1$,
\end{enumerate}
then $[t,s] \subset DS (\cf_{E})$.
\end{propo}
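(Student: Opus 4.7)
The plan is to invoke Proposition~6.17 of \cite{ChousionisV_LeykekhmanD_UrbanskiM_2018a}, which provides a general criterion for an interval to lie in the dimension spectrum of a conformal iterated function system, phrased in terms of sup-norm derivatives and the distortion constant. The statement here is the specialization of that general criterion to the continued fraction IFS $\cf_E$, and the proof amounts to checking that the hypotheses align, with the prefactor $K_E^{2s}$ accounting for the bounded distortion.

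To verify the hypotheses: Hypothesis (i), $P_{I(m)}(t)\le 0$, implies $\dim_{\cH}(J_{I(m)})\le t$ by the definition of Bowen's parameter and the strict monotonicity of the pressure on its finiteness domain (Proposition~\ref{p2j85}). Hypothesis (ii) reproduces verbatim the summation inequality required by the general criterion, and the prefactor $K_E^{2s}$ is precisely the factor prescribed there, stemming from the bounded distortion property of the system as quantified in Proposition~\ref{prop: distortion}.

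For the substance of the underlying criterion, the idea is to build, for every target $u\in[t,s]$, a subsystem $F_u\subset E$ with $\dim_{\cH}(J_{F_u})=u$. One considers the decreasing chain of subsystems
$$
F_k := I(m)\cup\{e_n : n\ge k\},\qquad k\ge m+1,
$$
starting from $F_{m+1}=E$, so that $\dim_{\cH}(J_{F_{m+1}})\ge s$, and shrinking to $I(m)$ as $k\to\infty$, with $\dim_{\cH}(J_{I(m)})\le t$. The summation inequality at parameter $s$ controls the change in the partition function when a single tail generator $\phi_{e_k}$ is removed, and the $K_E^{2s}$ factor is exactly what is needed to convert the underlying pointwise-derivative estimates into sup-norm estimates via the bounded distortion of $\cf_E$. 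An intermediate-value type argument, refined by adjoining finite initial portions of the tail between consecutive $F_k$, then produces a subsystem realizing any prescribed dimension in $[t,s]$.

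The main obstacle in the underlying general proof is that removing a single generator from an infinite system can cause a definite decrease in Bowen's parameter, so the discrete chain $\{\dim_{\cH}(J_{F_k})\}$ need not by itself fill the interval. The role of hypothesis (ii) is precisely to dominate this drop by the tail-sum contribution at $s$, ensuring that the finer interpolation via partial initial segments of the tail can close up the gaps and realize every value in $[t,s]$. This is also where the factor $K_E^{2s}$ becomes essential for continued fractions, where distortion is nontrivial whenever $1\notin E$.
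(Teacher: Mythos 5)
Your proposal is correct and matches the paper's own approach: the paper states this proposition as following directly from Proposition 6.17 of \cite{ChousionisV_LeykekhmanD_UrbanskiM_2018a} without supplying further argument, and your invocation of the same reference together with the check that the hypotheses align reproduces that reasoning exactly. Your supplementary sketch of the underlying mechanism is a reasonable heuristic, though the actual interpolation in Kesseb\"ohmer--Zhu/CLU style arguments builds the target alphabet greedily digit by digit rather than by adjoining tail segments between consecutive $F_k$.
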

We state as a corollary a special case of the previous proposition.
\begin{corollary}
\label{keycoro2}
Let $E=\{e_n\}_{n \in \N}$ be an increasing sequence of natural numbers. If there exists $s \geq 0$ such that
$$\sum_{n=k+1}^\infty \| \f'_{e_n}\|^{s}_\infty \geq K_E^{2s} \| \f'_{e_k}\|^{s}_\infty$$ for all $k \geq 2$,
then $[0,\min\{s, \dim_{\cH}(J_E)\}] \subset DS (\cf_{E})$.
 \end{corollary}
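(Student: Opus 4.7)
The plan is to derive the corollary as a direct instance of Proposition \ref{keypropo2}, taking $t = 0$ and $m = 1$. With this choice, condition (i) of that proposition is immediate: since $I(1) = \{e_1\}$ is a singleton, $Z_n(0) = 1$ for every $n$, hence $P_{I(1)}(0) = \lim_n n^{-1} \ln Z_n(0) = 0$. Condition (ii) for $k \geq m+1 = 2$ is precisely the hypothesis of the corollary. Consequently, whenever $s \leq \dim_\cH(J_E)$, Proposition \ref{keypropo2} applies verbatim and produces $[0,s] \subset DS(\cf_E)$, which is what we want in this case.

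For the case $s > \dim_\cH(J_E)$, I would reduce to the previous situation by showing that the sum inequality in the hypothesis persists when $s$ is replaced by any parameter $r \in [0, s]$; applying this with $r = \dim_\cH(J_E)$ and invoking the previous paragraph would then give $[0, \dim_\cH(J_E)] \subset DS(\cf_E)$. To prove this monotonicity, recall from the proof of Proposition \ref{lyapbound} that $\|\f'_e\|_\infty = 1/e^2$ for every $e \in \N$, so the hypothesis rewrites as
\[
\sum_{n=k+1}^\infty \left(\frac{e_k}{K_E\, e_n}\right)^{2s} \geq 1.
\]
Since $E$ is strictly increasing we have $e_n > e_k$ for $n \geq k+1$, and since $K_E \geq 1$ each ratio $e_k/(K_E e_n)$ is strictly less than $1$. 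Hence for every fixed $n$ the function $r \mapsto (e_k/(K_E e_n))^{2r}$ is decreasing on $[0,\infty)$, and therefore so is the partial sum, and (by monotone convergence) so is the full series. This is the exact analogue of Proposition \ref{compsums} adapted to the distortion-weighted inequality of Proposition \ref{keypropo2}.

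The main (and essentially only) obstacle is the monotonicity step in the second paragraph; it is completely routine given the explicit form $\|\f'_e\|_\infty = 1/e^2$ and the fact that $K_E \geq 1$. Once it is in place, a single application of Proposition \ref{keypropo2}, with $s$ itself if $s \leq \dim_\cH(J_E)$ and with $s$ replaced by $\dim_\cH(J_E)$ otherwise, yields $[0, \min\{s, \dim_\cH(J_E)\}] \subset DS(\cf_E)$, completing the proof.
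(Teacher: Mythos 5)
Your proof is correct, and it fills in exactly the details the paper leaves implicit when it states the corollary as ``a special case of the previous proposition.'' The choice $t=0$, $m=1$ makes condition (i) of Proposition \ref{keypropo2} trivial since $Z_n(0)=1$ for a singleton alphabet, so $P_{I(1)}(0)=0\le 0$; condition (ii) is then literally the hypothesis of the corollary; and your monotonicity argument handling $s>\dim_{\cH}(J_E)$ is the precise analogue of Proposition \ref{compsums} adapted to the distortion-weighted quantity. The only nitpick is wording: the passage from term-by-term monotonicity to monotonicity of the series is just comparison of nonnegative series (allowing the value $+\infty$), not the monotone convergence theorem. Otherwise the argument is exactly right and matches what the authors must have had in mind.
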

 
 \begin{remark} One may notice that Proposition \ref{keypropo} and \ref{keypropo2} are very similar. It turns out that Proposition \ref{keypropo} is more effective when $1 \in E$, in all other cases Proposition \ref{keypropo2} is preferable. This is because $K_E$ decreases as $\min E$ increases.
 \end{remark}
 The following theorem provides a checkable condition for determining if subsets of the dimension spectrum are nowhere dense. It follows from results in \cite{KessebohmerM_ZhuS_2006} and \cite{ChousionisV_LeykekhmanD_UrbanskiM_2018a}. We provide the details below.
 \begin{thm}
 \label{kessenode}
Let $E=\{e_n\}_{n \in \N}$ be an increasing sequence of natural numbers such that the system $\cS=\{\f_e\}_{e \in E}$ is absolutely regular. If there exist $0<s<r \leq \dim_{\cH}(J_E)$ such that
\begin{equation}
\label{sumndcond}
\sum_{n=k+1}^\infty \| \f'_{e_n}\|^{t}_\infty < K_E^{-2t} \| \f'_{e_k}\|^{t}_\infty\mbox{ for all }k \in \N, t\in (s,r),
\end{equation}
then $DS(\cS) \cap [s,r]$ is nowhere dense.
\end{thm}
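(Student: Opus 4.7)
The plan is to follow the gap-creation strategy developed by Kessebohmer and Zhu in \cite{KessebohmerM_ZhuS_2006} and further refined in \cite{ChousionisV_LeykekhmanD_UrbanskiM_2018a}. Hypothesis \eqref{sumndcond} is the natural counterpart to the inclusion criterion in Proposition \ref{keypropo2}: whereas there domination of the leading derivative by the tail forced an interval into the spectrum, here the reverse domination will force an open dense family of intervals out of the spectrum, which is exactly what nowhere density demands.

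First, for each $k\in\N$ I would convert \eqref{sumndcond} into a quantitative pressure gap associated with the index $k$. Given a subsystem $F\subset E$, the bounded distortion property (part \ref{cfbdp} of Definition \ref{cifs}) gives the two-sided comparison $Z_1(F,t)K_E^{-t}\le \|\phi'_\om\|_\infty^t \cdot |F|^{n-1}$-type estimates that let one read the sign of $P_F(t)$ from the single-letter sum $Z_1(F,t)=\sum_{e\in F}\|\phi'_e\|_\infty^t$ up to a controlled multiplicative factor bounded by $K_E^{2t}$. Comparing, at a fixed $t\in(s,r)$, any subsystem $F$ whose smallest missing index is $e_k$ with the enlarged subsystem $F\cup\{e_k\}$, hypothesis \eqref{sumndcond} furnishes a strict inequality
\[
\|\phi'_{e_k}\|_\infty^t - K_E^{2t}\sum_{n>k}\|\phi'_{e_n}\|_\infty^t > 0,
\]
and the factor $K_E^{-2t}$ in \eqref{sumndcond} is precisely what absorbs the two-sided distortion loss in passing between $Z_1$ and $Z_n$. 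Translating this through the strict monotonicity and continuity of $t\mapsto P_F(t)$ (Proposition \ref{p2j85}\ref{presscontconv}) produces an open interval $I_k=(a_k,b_k)\subset(s,r)$ with the property that no $F\subset E$ can realize $h_F\in I_k$.

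Second, I would argue that $\{I_k\}_{k\in\N}$ is dense in $[s,r]$. The endpoints $a_k,b_k$ are by construction close to the Bowen parameters $h_F$ and $h_{F\cup\{e_k\}}$ of the two candidate subsystems produced in step one, and these Bowen parameters form a dense subset of $[0,\dim_\cH(J_E)]$ because finite subsystems approximate $\dim_\cH(J_E)$ from below by Theorem \ref{t1j97} while arbitrary truncations at level $k$ sweep through all intermediate values as $k$ grows. Consequently every subinterval of $[s,r]$ meets some $I_k$, hence $[s,r]\setminus DS(\cS)$ is dense in $[s,r]$, which is the assertion that $DS(\cS)\cap[s,r]$ is nowhere dense.

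The main obstacle is the quantitative passage in step one from the summation inequality \eqref{sumndcond} to a genuine pressure gap, because topological pressure is defined by the asymptotics of $Z_n(F,t)$ rather than $Z_1(F,t)$ and an arbitrary subsystem $F$ may exhibit complicated combinatorics across all levels. The cleanest route is to invoke absolute regularity together with the Perron-Frobenius framework of Theorem \ref{thm: Falk_Nuss 1} and characterise $h_F$ via the spectral radius of the restricted transfer operator; the lacunarity hypothesis then delivers a genuine spectral-radius gap whose width is uniform in $t\in(s,r)$, and the exponent $2t$ in $K_E^{-2t}$ is dictated by exactly how the distortion constant enters both the upper and the lower bound on these spectral radii.
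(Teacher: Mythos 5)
Your high-level plan — use \eqref{sumndcond} to create pressure gaps that exclude open intervals from the spectrum — points in the right direction, but the proposal has a genuine gap where it matters most. The paper's proof rests on two specific citations that you would need either to invoke or to re-derive, and your sketch does neither.

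The first is \cite[Theorem~2.4]{KessebohmerM_ZhuS_2006}: it reduces nowhere density of $DS(\cS)\cap[s,r]$ to the single checkable criterion
\[
e^{P_{F_\infty^+}(t)} < e^{P_F(t)} \quad\text{for every nonempty finite }F\subset E\text{ and every }t\in[s,r],
\]
where $F^-=F\setminus\max(F)$ and $F_\infty^+=F^-\cup\{e_{N(F)+1},e_{N(F)+2},\dots\}$. Your steps one and two constitute an attempt to re-prove this reduction from scratch, and that attempt does not go through as written. In step two you claim the excluded intervals $I_k$ are dense in $[s,r]$ because the Bowen parameters $h_F$ are dense; but density of the $h_F$ does not imply density of the excluded intervals — there is no reason a priori why every point of $[s,r]$ should be approximable by intervals of the form you construct. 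The real content of the Kesseb\"ohmer--Zhu theorem is precisely the combinatorial argument that the specific operation $F\mapsto F_\infty^+$ produces enough such gaps, and you would need to reproduce that argument, not just appeal to density of Bowen parameters.

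The second missing ingredient is \cite[Proposition~4.7]{ChousionisV_LeykekhmanD_UrbanskiM_2018a}, which gives the pressure inequalities
\[
e^{P_{F_\infty^+}(t)} \le e^{P_{F^-}(t)} + K_E^t\!\!\sum_{n>N(F)}\!\|\f'_{e_n}\|_\infty^t
\qquad\text{and}\qquad
e^{P_F(t)} \ge e^{P_{F^-}(t)} + K_E^{-t}\|\f'_{\max F}\|_\infty^t,
\]
whose difference is exactly what \eqref{sumndcond} controls, and which explains why the hypothesis needs the factor $K_E^{-2t}$ (one power of $K_E^{t}$ from each inequality). Your step one instead tries to extract the pressure gap from the two-sided comparison $K_E^{-t}Z_1(F,t)\le e^{P_F(t)}\le Z_1(F,t)$ between pressure and the one-letter sum. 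That comparison is true, but it is not strong enough: it relates $e^{P_F(t)}$ to $Z_1(F,t)$ for a single $F$, whereas the proof requires comparing $e^{P_F}$, $e^{P_{F^-}}$ and $e^{P_{F_\infty^+}}$ with \emph{each other}, which is a genuinely different kind of estimate about how pressure changes when you add or remove letters. Finally, the appeal to the Perron--Frobenius framework of Theorem~\ref{thm: Falk_Nuss 1} at the end is a red herring; the paper's proof does not use transfer operators at all.
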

\begin{proof}  If $F \subset E$ is nonempty and finite we let $N(F) \in \N$ such that $\max(F)=e_{N(F)}$. Moreover we denote
$$F^-=F \stm \max (F),$$
and
$$F_\infty^+=(F \stm \max (F)) \cup \{e_{N(F)+1}, e_{N(F)+2}, e_{N(F)+3}, \dots\}.$$
According to \cite[Theorem 2.4]{KessebohmerM_ZhuS_2006} it suffices to check that for every nonempty finite set $F \subset E$ and every $t \in [s,r],$
\begin{equation}
\label{kessend}
e^{P_{F_\infty^+}(t)}<e^{P_{F}(t)}.
\end{equation}  
By \cite[Proposition 4.7]{ChousionisV_LeykekhmanD_UrbanskiM_2018a} we have that
\begin{equation*}
e^{P_{F_\infty^+}(t)} \leq e^{P_{F^-}(t)}+K_E^t \sum_{n=N(F)+1} \|\f'_{e_n}\|_\infty^t 
\end{equation*}
and 
\begin{equation*}e^{P_F(t)} \geq e^{P_{F^-}(t)}+K_E^{-t} \|\f'_{\max(F)}\|^t_{\infty}.
\end{equation*}
Therefore
\begin{equation}
\label{pressest}
e^{P_{F_\infty^+}(t)} \leq e^{P_F(t)}+K_E^t \sum_{n=N(F)+1} \|\f'_{e_n}\|_\infty^t -K_E^{-t} \|\f'_{\max(F)}\|^t_{\infty}.
\end{equation} 
Finally \eqref{kessend} follows by \eqref{pressest} and \eqref{sumndcond}. The proof is complete.
\end{proof}
We are now ready to prove  our main theorems, establishing fullness of spectrum for systems associated to various well known subsets of the natural numbers.
\subsection{Arithmetic progressions.}
Our ultimate goal in this section is to establish that real continued fractions with entries belonging to any arithmetic progression have full spectrum. The proof has several steps and we will start with the most well known examples of arithmetic progressions.
 \begin{thm}
 \label{specsubsyst}
The systems $\cf_{{odd}}$ and  $\cf_{{even}}$ have full spectrum.
 \end{thm}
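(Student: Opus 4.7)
The plan is to apply Corollary \ref{keycoro} to $\cf_{odd}$ and Corollary \ref{keycoro2} to $\cf_{even}$. In each case I need to exhibit some $s \ge \dim_\cH(J_E)$ at which the relevant tail-sum inequality from the corollary holds; the conclusion is then $[0,\min\{s,\dim_\cH(J_E)\}] = [0,\dim_\cH(J_E)] \subset DS(\cf_E)$, which is exactly what the theorem claims. For $s$ I take the rigorous upper bound $s_u$ for $\dim_\cH(J_E)$ produced by the Falk--Nussbaum algorithm of Section \ref{sec:dim}, as recorded in Table \ref{table: approach 2}. The reason I split the two cases is that $1 \in E_{odd}$ forces $K_{\cf_{odd}} = 4$, so Corollary \ref{keycoro} (which absorbs the distortion implicitly via $\beta$) is cleaner, while $1 \notin E_{even}$ lets Proposition \ref{distco} provide the much better estimate $K_{\cf_{even}} \le e^{2/3}$, making the sharper Corollary \ref{keycoro2} preferable.

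For $\cf_{odd}$, writing $E_{odd} = \{2n-1\}_{n \ge 1}$, the condition from Corollary \ref{keycoro} becomes
\[
\sum_{n=k+1}^\infty (2n)^{-2s_u} \;\ge\; \Bigl(\tfrac{2}{2k+1}\Bigr)^{2s_u}, \qquad k \ge 2.
\]
Since the summand is decreasing, the tail is bounded below by $\int_{k+1}^\infty (2x)^{-2s_u}\,dx = (2k+2)^{1-2s_u}/(2(2s_u-1))$ (valid since $s_u>1/2$), and after clearing denominators the required inequality reduces to
\[
(2k+2)\,\bigl((2k+1)/(2k+2)\bigr)^{2s_u} \;\ge\; 2^{2s_u+1}(2s_u-1).
\]
The left-hand side is monotonically increasing in $k$ while the right-hand side is a constant, so the inequality reduces to a single algebraic check at $k=2$ using the certified value $s_u = 0.821177$.

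For $\cf_{even}$, with $\|\f'_{2n}\|_\infty = (2n)^{-2}$ and $K_{\cf_{even}}^{2s_u} \le e^{4s_u/3}$, Corollary \ref{keycoro2} demands
\[
\sum_{n=k+1}^\infty (2n)^{-2s_u} \;\ge\; e^{4s_u/3}\,(2k)^{-2s_u}, \qquad k\ge 2.
\]
The same integral comparison reduces this to a single $k=2$ inequality with $s_u = 0.71950$ from Table \ref{table: approach 2}, again monotone in $k$.

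The main delicate step is the $k=2$ inequality: the integral lower bound on the tail loses a constant that is not huge, so the margin at $k=2$ is modest and must be certified rigorously (interval arithmetic via \texttt{IntLab}, as elsewhere in the paper). Should a chosen $s_u$ happen to be too loose for the integral bound to close the $k=2$ case, a safe fallback is Proposition \ref{keypropo} (or \ref{keypropo2}) applied with an initial segment $I(m)$ and some smaller cutoff $t<s_u$, giving $[t,s_u]\subset DS(\cf_E)$; iterating with shorter initial segments, and invoking compactness and perfectness of $DS(\cf_E)$ from \cite{ChousionisV_LeykekhmanD_UrbanskiM_2018a}, then fills in $[0,t]$ as well.
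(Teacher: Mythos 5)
Your argument is correct and follows essentially the same route as the paper's own proof: fix a rigorous upper bound $s$ for $\dim_{\cH}(J_E)$, apply Corollary \ref{keycoro} to $\cf_{odd}$ (because $1\in E_{odd}$ forces $K=4$) and Corollary \ref{keycoro2} to $\cf_{even}$ (where Proposition \ref{distco} gives $K\le e^{2/3}$), bound the tail sum from below by the integral test, observe the resulting expression is monotone in $k$, and verify the single base case. Two minor remarks: your use of $K^{2s}=e^{4s/3}$ in the even case is actually more faithful to Corollary \ref{keycoro2} than the paper's inequality \eqref{even2}, which appears to carry $\exp(2/3)$ rather than $\exp(2/3)^{2s}$ (either version is comfortably satisfied); and your fallback paragraph, while never needed here, is not quite right as stated --- compactness and perfectness of $DS(\cf_E)$ would not by themselves fill $[0,t]$; the paper's actual device for the low end of the spectrum is \cite[Theorem 6.3]{MU2} below $\theta_{\cS}=1/2$ combined with the bootstrapping via Proposition \ref{keypropo2} on initial segments, as in the proof of Theorem \ref{arithm1}.
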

 \begin{proof} We first consider $\cf_{{even}}$. Recall that as we proved in Section \ref{sec:dim}, $\dim_{\cH}(J_{E_{even}}) < 0.72:=s$. We aim to apply Corollary \ref{keycoro2} for $E=2 \N$, in particular we would like to verify that \begin{equation}
\label{evensum}
\sum_{n=k+1}^\infty \left( \frac{1}{2n}\right)^{2s} \geq K_{2 \N} \left( \frac{1}{2k}\right)^{2s}
\end{equation}
holds for all $k \geq 2$. By the integral test,
$$\sum_{n=k+1}^\infty \left( \frac{1}{n}\right)^{2s}  \geq \int_{k+1}^\infty (x)^{-2s} dx=  \frac{1}{2s-1}(k+1)^{1-2s}.$$
Hence if $k \in \N$ satisfies
\begin{equation}
\label{even2}
\frac{k^{2s}}{(k+1)^{2s-1}} \geq \exp(2/3)(2s-1),
\end{equation}
it also satisfies \eqref{evensum}. One can readily verify that \eqref{even2} holds for $k=2$. Moreover it is easy to check that the function
$$h_e(x)=\frac{x^{2s}}{(x+1)^{2s-1}}$$
is increasing for $x \geq 0$. Therefore \eqref{even2} is satisfied for all $k \geq 2$ and Corollary \ref{keycoro2} implies that $\cf_{{even}}$ has full dimension spectrum.

We now move on to $\cf_{{odd}}$. Recall from Section \ref{sec:dim} that $\dim_{\cH}(J_{E_{odd}}) < 0.822:=s$. We will apply Corollary \ref{keycoro} for $E=2 \N-1$.  If $e_n=2n+1$, note that it suffices to verify that 
\begin{equation}
\label{oddsum}\sum_{n=k+1}^\infty \left( \frac{1}{n+1}\right)^{2s} \geq 16^s \left( \frac{1}{2k+3}\right)^{2s}
\end{equation}
for all $k \geq 1$.
By the integral test,
$$\sum_{n=k+1}^\infty \left( \frac{1}{n+1}\right)^{2s}  \geq \int_{k+1}^\infty (x+1)^{-2s} dx=\frac{(k+2)^{1-2s}}{2s-1}.$$
Hence if $k \in \N$ satisfies
\begin{equation}
\label{odd2}
\frac{(2k+3)^{2s}}{(k+2)^{2s-1}} \geq 16^s(2s-1),
\end{equation}
then it also satisfies \eqref{oddsum}. Indeed \eqref{odd2} holds for $k=1$ and it is easy to see that the function 
$$h_o(x)=\frac{(2x+3)^{2s}}{(x+2)^{2s-1}}$$
is increasing for $x \geq 0$. Therefore \eqref{odd2} and  (consequently) \eqref{oddsum} hold for all $k \geq 1$. Therefore  Corollary \ref{keycoro} implies that $\cf_{2\N-1}$ has full dimension spectrum. The proof is complete.
\end{proof}

We will now prove a special case of Theorem \ref{progrespc}; the case of arithmetic progression whose first element is less or equal than the step of the progression.

\begin{thm} 
\label{arithm1}Let $q,s \in \N$  such that $s \leq q$ and consider the arithmetic progressions $E_{s,q}=\{s+n q\}_{n \geq 0}$. Then $\cf_{E_{s,q}}$ has full spectrum.
\end{thm}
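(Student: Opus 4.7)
The plan is to follow the template of Theorem \ref{specsubsyst} and split the argument according to whether $s=1$ or $s\geq 2$. The corner case $s=q=1$ is the Texan conjecture $E_{1,1}=\N$ resolved in \cite{KessebohmerM_ZhuS_2006}, so assume $(s,q)\neq(1,1)$.

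When $s=1$ and $q\geq 2$, the digit $1$ belongs to $E_{1,q}$ and I would invoke Corollary \ref{keycoro}. Proposition \ref{bijpropo} with the pointwise increasing bijection $2n-1\mapsto 1+(n-1)q$ (valid because $q\geq 2$) yields $\dim_\cH(J_{E_{1,q}})\leq\dim_\cH(J_{E_{odd}})<0.822=:\sigma$. Bounding the tail sum from below by the integral test reduces the required inequality to
\begin{equation*}
\frac{(3+(k-1)q)^{2\sigma}}{(2+kq)^{2\sigma-1}}\geq 4^{\sigma}q(2\sigma-1),\qquad k\geq 2.
\end{equation*}
A logarithmic derivative in $k$ shows the left-hand side is strictly increasing in $k$, so only the case $k=2$ must be verified.

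When $s\geq 2$ I would apply Corollary \ref{keycoro2} instead. Proposition \ref{distco} supplies $K_{E_{s,q}}\leq e^{2/(s^2-1)}\leq e^{2/3}$, and the bijection $2n\mapsto s+(n-1)q$ (pointwise increasing because $s,q\geq 2$) together with Proposition \ref{bijpropo} yields $\dim_\cH(J_{E_{s,q}})\leq \dim_\cH(J_{2\N})<0.72=:\sigma$. The same integral-test and monotonicity-in-$k$ arguments reduce the required inequality to
\begin{equation*}
\frac{(s+q)^{2\sigma}}{(s+2q)^{2\sigma-1}}\geq e^{4\sigma/(s^2-1)}\,q(2\sigma-1).
\end{equation*}

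The main obstacle is closing these $k=2$ inequalities uniformly in $q$. A direct asymptotic analysis shows their LHS/RHS ratios tend as $q\to\infty$ to constants depending only on $\sigma$ and $s$, and these constants may fall below $1$ for the uniform choices of $\sigma$ above. To close the gap one needs to refine $\sigma$ so that it decreases with $q$: the chain of estimates $\log Z_1(t)-t\log K_E\leq P(t)\leq\log Z_1(t)$ evaluated at $t=\dim_\cH(J_{E_{s,q}})$ gives the two-sided bound $1\leq Z_1(\dim_\cH(J_{E_{s,q}}))\leq K_E^{\dim_\cH(J_{E_{s,q}})}$, and combining this with the integral estimate for $Z_1$ forces $\dim_\cH(J_{E_{s,q}})\to 1/2^+$ as $q\to\infty$, which makes the $(2\sigma-1)$ factor on the right of the $k=2$ inequality vanish in the limit. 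The remaining bounded range of $q$ is then handled directly, appealing to the sharper Hausdorff-dimension estimates from Section \ref{sec:dim}.
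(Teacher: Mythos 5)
Your plan departs from the paper's argument in a way that creates a real gap, which you partly notice but do not close.

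The paper does not try to verify the tail inequality for all $k\geq 2$ via Corollary \ref{keycoro2}. Instead it uses Proposition \ref{keypropo2}, which only requires the tail condition for $k\geq m+1$, and it exploits the hypothesis $s\leq q$ through the two-sided replacement $(nq+s)^{-2t}\geq ((n+1)q)^{-2t}$ on the sum and $(kq+s)^{-2t}\leq (kq)^{-2t}$ on the single term, so that the factor $q^{-2t}$ cancels and the inequality \eqref{eqinterm2} becomes genuinely $q$-independent. With the uniform bound $s_0=0.744>\dim_{\cH}(J_{E_{1,3}})$ this closes for $k\geq 5$, and the remaining segment $[s_2,\dim_{\cH}]$ with $s_2<1/2$ is combined with the inclusion $[0,1/2)\subset DS(\cf_{E_{s,q}})$ coming from $\theta_{\cf_{E_{s,q}}}=1/2$ and \cite[Theorem 6.3]{MU2}. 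Your route keeps a stray factor of $q$ on the right-hand side of the $k=2$ inequality, and you correctly observe that the ratio then tends to a constant $<1$ as $q\to\infty$; so, as written, Corollary \ref{keycoro2} simply cannot be applied uniformly in $q$.

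The proposed patch of letting $\sigma$ decrease to $1/2$ with $q$ does not repair this. For $s\geq 2$ the inequality $1\leq Z_1(h)$ indeed forces $h\leq 1/2+O(1/q)$ because $s^{-2h}<1$ is bounded away from $1$, but for $s=1$ the term $\|\f'_1\|_\infty^h=1$ makes $Z_1(h)\geq 1$ vacuous, so the same $Z_1$ argument gives no upper bound on $\dim_{\cH}(J_{E_{1,q}})$ at all (only a lower bound from $Z_1(h)\leq K^h$). Thus the $s=1$ branch of your proposal has no mechanism to produce the decreasing $\sigma(q)$ it needs. Moreover, even in the $s\geq 2$ branch, what you describe is a plan, not a proof: you would still have to write the explicit $\sigma(q)$, verify the $k=2$ inequality with it, and check the finitely many small $q$; the numerical data of Section \ref{sec:dim} cover only a handful of fixed alphabets, not general $E_{s,q}$, so "appealing to the sharper estimates" does not finish those cases. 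Compared to the paper's proof, which sidesteps all of this by cancelling $q$ and relaxing the threshold to $k\geq 5$, your approach is both harder to close and incomplete as stated.
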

\begin{proof}
For $q=1$ the result is due to Kesseb\"ohmer and Zhu \cite{KessebohmerM_ZhuS_2006} who gave a positive answer to the Texan conjecture. For $q=2$ the result is contained in Theorem \ref{specsubsyst}. Hence we can assume that $q \geq 3$. 

Recall from Section \ref{sec:dim} that $\dim_{\cH}(J_{E_{1,3}}) < 0.744:=s_0$. Therefore if $q \geq 3$ and $s \leq q$, Proposition \ref{bijpropo} implies that
\begin{equation}
\label{dimeqbd2}\dim_{\cH}(J_{E_{s,q}}) \leq  \dim_{\cH}(J_{E_{1,3}}) < s_0.
\end{equation}
We aim to apply Proposition \ref{keypropo2} and a bootstrapping argument as in the proof of \cite[Theorem 1.4]{ChousionisV_LeykekhmanD_UrbanskiM_2018a}. We will first determine for which $k \geq 1$,
\begin{equation}
\label{qsum2}\sum_{n=k+1}^\infty \|\f'_{e_n}\|_\infty^{h_{s,q}} \geq 4^{2{h_{s,q}}} \|\f'_{e_k}\|_\infty^{h_{s,q}}
\end{equation}
where  $h_{s,q}=\dim_{\cH}(J_{E_{s,q}})$ and $e_k=s+kq$.
By \cite[Lemma 6.17]{ChousionisV_LeykekhmanD_UrbanskiM_2018a} and \eqref{dimeqbd2}, the condition \eqref{qsum2} holds as long as
\begin{equation}
\label{qsum3}\sum_{n=k+1}^\infty (nq+s)^{-2s_0} \geq 4^{2s_0} (kq+s)^{-2s_0}.
\end{equation}
We have that
$$\sum_{n=k+1}^\infty (nq+s)^{-2s_0} \geq \sum_{n=k+1}^\infty ((n+1)q)^{-2s_0} \geq q^{-2s_0}\int_{k+1}^\infty (x+1)^{-2s_0}\,dx=q^{-2s_0} \frac{(k+2)^{1-2s_0}}{2s_0-1}.$$
Therefore \eqref{qsum2} holds if 
\begin{equation*}
q^{-2s_0} \frac{(k+2)^{1-2s_0}}{2s_0-1} \geq 4^{2s_0} (kq)^{-2s_0},
\end{equation*}
or equivalently if
\begin{equation}
\label{eqinterm2}\frac{k^{2 s_0}}{(k+2)^{2 s_0-1}} \geq 4^{2s_0} (2s_0-1).
\end{equation}
Note that the function
$$h(x)=\frac{x^{2 s_0}}{(x+2)^{2 s_0-1}}$$
is increasing for $x>0$. Therefore, since we easily check by subsitution that \eqref{eqinterm2} holds for $k=5$, we deduce that \eqref{qsum2} holds for all $k \geq 5$.

Since $q \geq 3$ it follows by Proposition \ref{bijpropo} that for all $m \in \N$
\begin{equation}
\label{initicomp}
\dim_{\cH}(J_{I_{s,q}(m)}) \leq \dim_{\cH}(J_{I_{1,3}(m)}).
\end{equation}
where $I_{s,q}(m)=\{s+nq: n=0, \dots, m-1\}$. 

Using the code of Falk-Nussbaum from \cite{FalkRS_NussbaumRD_2016a}, and verifying our estimate using \texttt{IntLab} as in Section \ref{sec:dim}, we deduce that
$$\dim_{\cH}(J_{I_{1,3}(5)}) \in [0.597742, 0.597746].$$
We let $s_1=0.597746$ and we distinguish two cases. 

First assume that $s_1\geq \dim_{\cH}(J_{E_{s,q}})$. Arguing as before we deduce that
\begin{equation}
\label{eqinterm3}\frac{k^{2 s_1}}{(k+2)^{2 s_1-1}} \geq 4^{2s_1} (2s_1-1)
\end{equation}
for all $k \geq 2$. Note that \eqref{eqinterm3} implies that 
\begin{equation}
\label{qsum4}\sum_{n=k+1}^\infty \|\f'_{e_n}\|_\infty^{s_1} \geq 4^{2s_1} \|\f'_{e_k}\|_\infty^{s_1}
\end{equation}
holds for all $k \geq 2$. Using the Falk-Nussbaum code once more (as well as \texttt{IntLab} verification)  we deduce that
$$\dim_{\cH}(J_{I_{1,3}(2)}) \in [0.411181, 0.411184].$$
Hence if $s_2=0.411185$ 
\begin{equation}
\label{press2}
P_{I_{s,q}(2)}(s_2) <0.
\end{equation} 
By \eqref{qsum4}, \eqref{press2} and Proposition \ref{keypropo2} we deduce that
\begin{equation}
\label{firscasetint}
[0.411185, \dim_{\cH}(J_{E_{s,q}})] \subset DS (\cf_{E_{s,q}}).
\end{equation}
Note that $\theta_{\cf_{E_{s,q}}}=1/2$, hence \cite[Theorem 6.3]{MU2} implies that 
\begin{equation}
\label{seccasetint}
[0, 1/2) \subset DS (\cf_{E_{s,q}}).
\end{equation}
Combining \eqref{firscasetint} and \eqref{seccasetint} we deduce that in the case when $s_1\geq \dim_{\cH}(J_{E_{s,q}})$, the system $\cf_{E_{s,q}}$ has full dimension spectrum.

We now consider the case when $s_1=0.597746<\dim_{\cH}(J_{E_{s,q}})$.  Since  \eqref{qsum2} holds for all $k \geq 5$, and \eqref{initicomp} implies that $P_{I_{s,q}(5)}(s_1) <0,$ we can apply Proposition \ref{keypropo2} to get:
\begin{equation}
\label{seccase}
[0.597746, \dim_{\cH}(J_{E_{s,q}})] \subset DS (\cf_{{E_{s,q}}}).
\end{equation}
Note now, that \eqref{qsum4}, \eqref{press2} and Proposition \ref{keypropo2} imply that
\begin{equation}
\label{firscasetint1}
[0.411185, 0.597746] \subset DS (\cf_{E_{s,q}}).
\end{equation}
Combining \eqref{seccase}, \eqref{firscasetint1} and \eqref{seccasetint} we deduce that  $\cf_{E_{s,q}}$ has full dimension spectrum. The proof is complete.
\end{proof}
Before proving Theorem \ref{progrespc} we need the following auxiliary lemma.
\begin{lm} 
\label{kesem}Let  $m \in \N$ and set $E_m=\{m, m+1, \dots\}$. The  system $\cf_{E_m}$ has full spectrum.
\end{lm}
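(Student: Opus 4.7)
For $m=1$ the lemma is exactly the Texan conjecture, established by Kesseb\"ohmer and Zhu in \cite{KessebohmerM_ZhuS_2006}, so I may assume $m\ge 2$. The plan is to apply Corollary \ref{keycoro} in one shot at the exponent $s:=h_m=\dim_{\cH}(J_{E_m})$. Enumerate $E_m$ as $\{e_n\}_{n\ge 1}$ with $e_n=m+n-1$; then the two sequences appearing in the corollary specialise to
$$
\alpha_{e_n}(r)=\frac{1}{(m+n)^{2r}},\qquad \beta_{e_k}(r)=\frac{4^{r}}{(m+k+1)^{2r}},
$$
so the inequality to check is $\sum_{n\ge k+1}(m+n)^{-2h_m}\ge 4^{h_m}(m+k+1)^{-2h_m}$ for every $k\ge 2$.

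The first step is to confirm that $h_m\in (1/2,1]$. Indeed $Z_1(1/2)=\sum_{n\ge m} n^{-1}=+\infty$, hence Proposition \ref{p2j85} forces $P_{E_m}(1/2)=+\infty$; on the other hand $P_{E_m}$ is continuous and strictly decreasing on $(1/2,\infty)$ with $P_{E_m}(t)\to-\infty$, so its unique zero $h_m$ lies strictly above $1/2$, while $h_m\le 1=\dim_{\cH}(J_\N)$ because $J_{E_m}\subset J_\N$. The integral test is therefore legitimate and produces
$$
\sum_{n=k+1}^{\infty}\frac{1}{(m+n)^{2h_m}}\;\ge\;\int_{k+1}^{\infty}\frac{dx}{(m+x)^{2h_m}}\;=\;\frac{(m+k+1)^{1-2h_m}}{2h_m-1},
$$
so the required sum condition is implied by the much cruder inequality
$$
m+k+1\;\ge\; 4^{h_m}(2h_m-1).
$$
Since $h_m\le 1$, the right-hand side is bounded above by $4$; and $m+k+1\ge 5$ for every $m\ge 2$ and every $k\ge 2$, so the inequality is automatic. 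Corollary \ref{keycoro} then delivers $[0,h_m]\subset DS(\cf_{E_m})$, which is exactly full spectrum.

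\textbf{Main obstacle.} There is essentially no major difficulty: the verification collapses to the elementary estimate $m+k+1\ge 4$ once Corollary \ref{keycoro} is specialised to the alphabet $E_m$ and the integral test is invoked. The only genuinely subtle point is the strict inequality $h_m>1/2$, which is needed both to make the integral test applicable and to keep the resulting bound non-vacuous; this is handled by the thermodynamic formalism recorded in Proposition \ref{p2j85}.
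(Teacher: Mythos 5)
Your proof is correct, and it takes a noticeably cleaner route than the paper does.

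The paper's own argument proves the lemma via Corollary \ref{keycoro2} (the criterion that carries the distortion constant $K_{E_m}$), first reducing the exponent from $h_m$ to $1$ via a monotonicity lemma from \cite{ChousionisV_LeykekhmanD_UrbanskiM_2018a}, then verifying the resulting inequality only for $k\ge 5$; this covers $m\ge 4$, and the remaining cases $m=2,3,4$ are handled separately by combining Proposition \ref{keypropo2} with rigorous Falk--Nussbaum computations of $\dim_{\cH}(J_{\{2,3,4\}})$ and the interval $[0,1/2)\subset DS(\cf_{E_m})$ coming from \cite[Theorem~6.3]{MU2}. You instead plug directly into Corollary \ref{keycoro} at $s=h_m$, and the crucial observation is that after the integral test the condition collapses to $m+k+1\ge 4^{h_m}(2h_m-1)$; since $1/2<h_m\le 1$ (the lower bound coming from $Z_1(1/2)=\infty$ and Proposition \ref{p2j85}, the upper bound from $J_{E_m}\subset(0,1)$), the right-hand side is at most $4$ while the left is at least $5$ for all $m\ge 2$, $k\ge 2$. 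This removes the case split and all computer assistance. It is worth noting that the paper's remark preceding Theorem \ref{kessenode} suggests Corollary \ref{keycoro2} is ``preferable'' when $1\notin E$ because $K_E<4$; your argument shows that for this particular family the slack is so large that the cruder criterion with the constant $4$ built in already suffices, and moreover keeps the argument uniform in $m$.

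One tiny presentational remark: the integral test does not by itself require $h_m>1/2$; what you need $h_m>1/2$ for is the convergence of $\int_{k+1}^\infty (m+x)^{-2h_m}\,dx$. Your proof handles this correctly, but the phrasing ``to make the integral test applicable'' could be sharpened to ``to make the improper integral finite''. This does not affect correctness.
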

\begin{proof}
If $m=1$ the result is due to Kesseb\"ohmer and Zhu \cite{KessebohmerM_ZhuS_2006}. We can thus assume that $m \geq 2$. We want to determine for which $k \in \N$
\begin{equation}
\label{sumkese}
\sum_{n=k+1}^\infty \|\f'_n\|^{h_m}_\infty \geq K_m^{2 h_m} \, \|\f'_{k}\|^{h_m}_\infty,
\end{equation}
where $h_m=\dim_{\cH}(J_{E_m})$ and $K_m=K_{E_m}$. Since $K_m \leq K_2=\exp(2/3)$ and $h_m <1$,  \cite[Lemma 6.17]{ChousionisV_LeykekhmanD_UrbanskiM_2018a} implies that if 
\begin{equation}
\label{sumkese1}
\sum_{n=k+1}^\infty \|\f'_n\|_\infty \geq \exp(2/3)^2 \, \|\f'_{k}\|_\infty
\end{equation}
holds, then \eqref{sumkese} holds as well. Observer that if
\begin{equation}
\label{condkese2}
\frac{k^2}{k+1} \geq \exp(4/3),
\end{equation}
then it follows easily that \eqref{sumkese1} holds. Since $f(x)=\frac{x^2}{x+1}$ is increasing for $x \geq 0$ and \eqref{condkese2} holds for $k=5$ we deduce that \eqref{sumkese} holds for $k \geq 5$. Hence Corollary \ref{keycoro2} implies that $\cf_{E_m}$ has full spectrum for all $m \geq 4$. Therefore it only remains to check the cases $m=2,3,4$. We provide the details for the case $m=2$, the other cases follow similarly. Let $I=\{2,3,4\}$. Using the code of Falk-Nussbaum from \cite{FalkRS_NussbaumRD_2016a}, and verifying our estimate using \texttt{IntLab} as in Section \ref{sec:dim}, we deduce that
$$\dim_{\cH}(J_{I}) \in [0.480695, 0.480697].$$
Hence if $s_1=0.481$, $P_{I}(s_1) \leq 0$. Moreover \eqref{sumkese} holds for $k \geq 5$ therefore Proposition \ref{keypropo2} implies that $[s_1, h_{m}] \subset DS(\cf_{E_2})$. Finally \cite[Theorem 6.3]{MU2} implies that $[0, 1/2) \subset DS (\cf_{E_{2}})$, because $\theta_{\cf_{E_{2}}}=1/2$. Joining these intervals we obtain that $\cf_{E_2}$ has full spectrum.
\end{proof}
We are now ready to prove Theorem \ref{progrespc} which we restate for convenience of the reader. 
\begin{thm} If $A \subset \N$ is any infinite arithmetic progression then $\cf_{A}$ has full spectrum.
\end{thm}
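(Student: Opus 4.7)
The plan is to split $A = \{a + nq\}_{n \geq 0}$ into three cases and reduce the only genuinely new case to Corollary \ref{keycoro2}. When $q = 1$ the alphabet is $A = E_a$ and Lemma \ref{kesem} applies; when $q \geq 2$ and $a \leq q$, the conclusion is exactly Theorem \ref{arithm1}. The new content is the case $q \geq 2$, $a > q$, and the remainder of the plan addresses this.

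The first step is to bound $\dim_{\cH}(J_A)$ from above by a constant $s_0 < 1$ depending only on $q$. Writing $a = a' + m_0 q$ with $a' \in \{1, \dots, q\}$ and $m_0 \geq 1$, and setting $A' = \{a' + nq\}_{n \geq 0}$, the map $a' + nq \mapsto a + nq$ is an increasing bijection of $A'$ onto $A$, so Proposition \ref{bijpropo} gives $\dim_{\cH}(J_A) \leq \dim_{\cH}(J_{A'})$. For $q = 2$ this means $\dim_{\cH}(J_A) \leq \dim_{\cH}(J_{E_{odd}}) \leq s_0 := 0.822$ by the estimates in Section \ref{sec:dim}; for $q \geq 3$, a further application of Proposition \ref{bijpropo} against $E_{1,3}$ gives $\dim_{\cH}(J_A) \leq \dim_{\cH}(J_{E_{1,3}}) \leq s_0 := 0.744$, as in the proof of Theorem \ref{arithm1}.

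Next, apply Corollary \ref{keycoro2} with $s = s_0$. The sum condition to be verified is
$$\sum_{n = k+1}^\infty (a + (n-1)q)^{-2 s_0} \geq K_A^{2 s_0}\, (a + (k-1)q)^{-2 s_0}, \quad k \geq 2.$$
Since $\min A = a \geq q + 1 \geq 3$, Proposition \ref{distco} gives $K_A \leq \exp\bigl(2/((q+1)^2-1)\bigr)$, so $K_A^{2 s_0}$ is bounded above by an explicit constant depending only on $q$. Using the integral test,
$$\sum_{n = k+1}^\infty (a + (n-1)q)^{-2 s_0} \geq \int_k^\infty (a + xq)^{-2 s_0}\, dx = \frac{(a + k q)^{1 - 2 s_0}}{q(2 s_0 - 1)},$$
so, setting $u := a + (k-1)q$, it suffices to prove $u^{2 s_0}(u+q)^{1 - 2 s_0} \geq K_A^{2 s_0}\, q (2 s_0 - 1)$. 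The left-hand side grows linearly in $u$ while the right-hand side is a fixed $q$-dependent constant, and substituting the explicit values of $s_0$ and $K_A$ one verifies by a direct computation that the inequality holds for every $k \geq 2$ (i.e.\ for $u \geq a + q \geq 2q+1$), under the constraint $a \geq q + 1$.

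Once the sum condition is checked, Corollary \ref{keycoro2} gives $[0, \min\{s_0, \dim_{\cH}(J_A)\}] = [0, \dim_{\cH}(J_A)] \subset DS(\cf_A)$, which is full spectrum. The main obstacle is the verification of the sum condition in the borderline regime $k = 2$ with $a = q + 1$, where the integral estimate is tightest; for such borderline cases one plugs in the explicit numerical constants $s_0$ and the upper bound on $K_A$ corresponding to each $q \geq 2$ and checks the resulting scalar inequality by hand, exactly in the spirit of the pressure/sum checks already carried out in the proof of Theorem \ref{arithm1}.
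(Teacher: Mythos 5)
Your proof is correct, and for the only genuinely new case --- $q \geq 2$ with $a > q$ --- it takes a different and cleaner route than the paper's. The paper re-uses the sum-condition estimate \eqref{qsum2} from the proof of Theorem \ref{arithm1}, which was established with the worst-case distortion bound $4$ and therefore only holds from index $5$ onward; this forces a bootstrap via Proposition \ref{keypropo2} for the small shifts and an extra numerical estimate of $\dim_{\cH}(J_{\{4,7,10,13\}})$ (and a separate invocation of the odd/even computations for $q=2$). Your observation is that in this regime $\min A = a \geq q+1 \geq 3$, so Proposition \ref{distco} gives the much sharper bound $K_A \leq e^{1/4}$; with that constant, the sum condition of Corollary \ref{keycoro2} holds for all $k \geq 2$ directly, so the bootstrap and the extra numerics disappear and $q=2$ and $q \geq 3$ are handled uniformly. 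What this buys is a shorter, lighter proof that isolates exactly why the shifted progressions are \emph{easier} than the base ones: the shift pushes $\min A$ up, which improves the distortion constant.

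The one spot that needs tightening is the clause ``one verifies by a direct computation that the inequality holds for every $k \geq 2$.'' As written this is not a finite check: it is a two-parameter family of scalar inequalities in $q \geq 2$ and $u := a + (k-1)q \geq 2q+1$, so a uniform argument is required, not a $q$-by-$q$ substitution. The missing uniform estimate is short: since $u > q$ one has $u + q < 2u$ and hence
\begin{equation*}
u^{2 s_0}(u+q)^{1-2 s_0} > 2^{1-2 s_0}\,u \geq 2^{1-2 s_0}(2q+1) > 2^{2-2 s_0}\,q,
\end{equation*}
while $K_A^{2 s_0}\,q\,(2 s_0 - 1) \leq e^{s_0/2}\,q\,(2 s_0 - 1)$; dividing by $q$, the required inequality reduces to $2^{2-2 s_0} > e^{s_0/2}(2 s_0 - 1)$, which holds numerically both for $s_0 = 0.822$ (about $1.28 > 0.97$) and for $s_0 = 0.744$ (about $1.43 > 0.71$). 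With this estimate inserted, your proof is complete.
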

\begin{proof} Any infinite arithmetic progression is of the form $A=\{s+nq\}_{n \geq 0}$ where $s,q \in \N$. If $s \leq q$, then the conclusion follows by Theorem \ref{arithm1}.  The case $q=1$ follows by Lemma \ref{kesem}. The case $q=2$ follows by Corollary \ref{keycoro} since we have proved that the inequalities \eqref{evensum} and \eqref{oddsum} hold for all $k \geq 2$. Therefore we can assume that $q \geq 3$ and $s>q$. In that case there exists some $k \geq1$ and some $0<r \leq q$ such that $s=kq+r$. Hence $A=\{r+nq\}_{n \geq k}$. Since we have shown in the proof of Theorem \ref{arithm1} that  \eqref{qsum2} holds for all $k \geq 5$, Corollary \ref{keycoro2} implies that $\cf_{A}$ has full spectrum if $k \geq 4$. Now suppose that $k=1, 2, 3$ and let $I_k=\{r+kq, \dots, r+4q\}$. Note that since $q \geq 3$, Proposition \ref{bijpropo} implies that
\begin{equation}
\label{compdim}
\dim_{\cH}(J_{I_k}) \leq \dim_{\cH}(J_{I_1}) \leq \dim_{\cH}(J_{\{4, 7, 10, 13\}}).
\end{equation}
Using the code of Falk-Nussbaum from \cite{FalkRS_NussbaumRD_2016a}, and verifying our estimate using \texttt{IntLab} as in Section \ref{sec:dim}, we deduce that
$$\dim_{\cH}(J_{\{4, 7, 10, 13\}}) \in [0.3455682, 0.3455683].$$
Therefore by \eqref{compdim} we deduce that $\dim_{\cH}(J_{I_k}) <0.35$. Hence $P_{I_k}(0.35)<0$ and since \eqref{qsum2} holds for all $k \geq 5$, Proposition \ref{keypropo2} implies that 
$$[0.35, \dim_{\cH}(J_{A})] \subset DS(\cf_{A}).$$
Finally \cite[Theorem 6.3]{MU2} implies that $[0, 1/2) \subset DS (\cf_{A})$, because $\theta_{\cf_{A}}=1/2$. Therefore $\cf_A$ had full dimension spectrum. The proof is complete.
\end{proof}

\subsection{Primes.} In this section we will prove Theorem \ref{primes} which we restate below.
\begin{thm}
\label{primespectrum}
The system $\cf_{{prime}}$ has full spectrum.
\end{thm}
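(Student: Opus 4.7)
The plan is to mimic the strategy used for arithmetic progressions in Theorem \ref{arithm1}, and in particular to apply Proposition \ref{keypropo2} with $E=E_{\text{prime}}$ together with a bootstrapping argument as in \cite[Theorem 1.4]{ChousionisV_LeykekhmanD_UrbanskiM_2018a}. The Falk--Nussbaum estimates of Section \ref{sec:dim} yield $h_{\text{prime}}:=\dim_{\cH}(J_{E_{\text{prime}}})\le 0.6752=:s_0$, and since the least prime is $2$, Proposition \ref{distco} gives $K_{E_{\text{prime}}}\le \exp(2/3)$.

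The key analytic step will be to verify the inequality
\[
\sum_{n=k+1}^{\infty} p(n)^{-2s_0} \;\ge\; \exp(4s_0/3)\, p(k)^{-2s_0}
\]
for all $k\ge k_0$, for some modest explicit threshold $k_0$. To do this I would use the Dusart bounds $n(\ln n+\ln\ln n-1)\le p(n)\le n(\ln n+\ln\ln n)$ from \cite{DusartP_1999} (valid for $n\ge 6$): the upper bound on $p(n)$ is used to bound the left-hand sum from below via an integral comparison with $\int_{k+1}^\infty (x(\ln x+\ln\ln x))^{-2s_0}\,dx$, while the lower bound on $p(k)$ is used to control the right-hand side. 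This reduces the problem to an elementary asymptotic inequality in $k$, which becomes easier as $k$ grows; for the finitely many $k<k_0$ I would verify it by direct numerical substitution using the actual primes. By \cite[Lemma 6.17]{ChousionisV_LeykekhmanD_UrbanskiM_2018a} and the monotonicity trick behind Proposition \ref{compsums}, the bound at $s_0$ transfers to the analogous bound at $h_{\text{prime}}$ required by Proposition \ref{keypropo2}.

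Next, I would run the Falk--Nussbaum code with \texttt{IntLab} verification to produce a rigorous upper bound $s_1$ on $\dim_{\cH}(J_{I(k_0)})$ where $I(k_0)=\{p(1),\dots,p(k_0)\}$, so that $P_{I(k_0)}(s_1)\le 0$. Proposition \ref{keypropo2} then yields
\[
[s_1,h_{\text{prime}}]\subset DS(\cf_{E_{\text{prime}}}).
\]
Separately, since $\theta_{\cf_{E_{\text{prime}}}}=1/2$, \cite[Theorem 6.3]{MU2} gives $[0,1/2)\subset DS(\cf_{E_{\text{prime}}})$, and if $s_1\le 1/2$ the two intervals glue to the full spectrum.

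The main obstacle will be ensuring $s_1\le 1/2$. Because the primes are sparser than any fixed arithmetic progression, the sum inequality may only hold for a larger $k_0$ than in Theorem \ref{arithm1}, which could inflate $\dim_{\cH}(J_{I(k_0)})$ beyond $1/2$. If a direct computation fails to yield $s_1\le 1/2$, I would iterate the bootstrap exactly as in the second case of the proof of Theorem \ref{arithm1}: having established $[s_1,h_{\text{prime}}]\subset DS$, the effective upper bound on $h_{\text{prime}}$ improves from $s_0$ to $s_1$, and the sum inequality, reran with the sharper exponent, will then hold for a smaller threshold $k_1<k_0$, producing via Proposition \ref{keypropo2} a new interval $[s_2,s_1]\subset DS$ with $s_2<s_1$. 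A finite number of such bootstrap steps will drive the threshold below $1/2$, at which point the union with $[0,1/2)$ completes the proof.
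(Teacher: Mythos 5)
Your proposal is correct in outline and would ultimately succeed, but it takes a more circuitous route than the paper's proof. The paper shows directly that the key inequality $\sum_{n=k+1}^\infty p(n)^{-2s_0} \geq \exp(4s_0/3)\,p(k)^{-2s_0}$ holds for \emph{all} $k\geq 2$ with $s_0 = 0.6752$: the Dusart bounds (together with the auxiliary estimate $\ln x + \ln\ln x \leq \sqrt{x}$) handle $k\geq 9$, and the finitely many cases $k=2,\dots,8$ are checked numerically in Matlab with the actual primes. With the inequality holding for all $k\geq 2$, a single application of Corollary~\ref{keycoro2} immediately yields $[0,\min\{s_0, h_{\mathrm{prime}}\}] = [0,h_{\mathrm{prime}}]\subset DS(\cf_{\mathrm{prime}})$, since $s_0>h_{\mathrm{prime}}$; no pressure bound, no Falk--Nussbaum computation of $\dim_{\cH}(J_{I(k_0)})$, no bootstrap, and no appeal to the $[0,1/2)$ result from \cite{MU2} are needed. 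Your anticipated obstacle --- that sparsity of the primes might push the threshold $k_0$ so high that $\dim_{\cH}(J_{I(k_0)})$ exceeds $1/2$ --- simply does not arise; the threshold from the Dusart estimate is already $k_0=9$, and checking $k\leq 8$ directly is cheap. Your bootstrap scheme (the strategy of Theorem~\ref{arithm1}) is a perfectly valid fallback and would also give full spectrum, so this is a correct alternative, just heavier than necessary for this alphabet. One small mismatch: you phrase the fallback via Proposition~\ref{keypropo2} plus $[0,1/2)$, whereas the paper's cleaner Corollary~\ref{keycoro2} already packages the ``extend to $0$'' step, so even if a bootstrap had been needed you would want to anchor the bottom interval with Corollary~\ref{keycoro2} at a smaller exponent rather than invoking \cite{MU2} separately.
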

\begin{proof} We will now turn out attention to $\cf_{{prime}}$. Recall from Section \ref{sec:dim} that $\dim_{\cH}(J_{E_{prime}}) < 0.6752:=s$. We intend to use Corollary \ref{keycoro}. For $n \in \N$ let $p(n)$ be the $n$-th prime. Proposition \ref{distco} implies that $K_{prime}:=K=\exp(2/3)$. We will show that 
\begin{equation}
\label{primesum}\sum_{n=k+1}^\infty \left( \frac{1}{p(n)}\right)^{2s} \geq  \left( \frac{K}{p(k)}\right)^{2s}
\end{equation}
for all $k \geq 2$. By \cite{DusartP_1999}, for all $n \geq 5$
$$p(n) \leq n ( \ln(n)+\ln(\ln(n))).$$
Hence
\begin{equation}
\label{1stprimint}
\begin{split}
\sum_{n=k+1}^\infty {p(n)}^{-2s} &\geq \sum_{n=k+1}^\infty \left(n(\ln(n)+\ln(\ln(n)))\right)^{-2s}\\
&\geq \int_{k+1}^\infty \left( x(\ln(x)+\ln(\ln(x)))\right)^{-2s} dx.
\end{split}
\end{equation}
We will now show that
\begin{equation}
\label{loglogsqrt}
\ln(x)+\ln(\ln(x)) \leq \sqrt{x} \quad\mbox{ for all }x \geq 10 \ (\text{actually for all $x>0$)}.
\end{equation}
Let $f(x)=\sqrt{x}-\ln(x)-\ln(\ln(x))$. Then
$$f'(x)=\frac{(\sqrt{x}-2)\ln(x)-2}{2 x \ln (x)}:=\frac{g(x)}{2 x \ln (x)}.$$
Note that $g'(x)=\frac{\sqrt{x}(2+ \ln(x))-4}{2x}>0$ for all $x\geq 4$ and $g(10)>0$. Hence we deduce that $f'$ is increasing in $[10, \infty)$. Since $f(10)>0$ it follows that \eqref{loglogsqrt} holds.
Combining \eqref{1stprimint} and \eqref{loglogsqrt} we deduce that for $k \geq 9$
$$\sum_{n=k+1}^\infty p(n)^{-2s} \geq \int_{k+1}^\infty (x \sqrt{x})^{-2s} dx = \int_{k+1}^\infty x^{-3s} dx=\frac{(k+1)^{1-3s}}{3s-1}.$$
It is well known, see e.g. \cite[Lemma 1]{DusartP_1999}, that
$$p(n) \geq n \ln(n) \quad \mbox{ for }n \geq 2.$$
Hence, if 
\begin{equation}
\label{primesum2}
\frac{(k \ln (k))^{2s}}{(k+1)^{3s-1}} \geq \exp(2s/3) (3s-1)
\end{equation} holds for $k \geq 9$, then
\eqref{primesum} holds for $k \geq 9$ as well. Let $$h_p(x)=\ln(x)^{2s}\frac{x^{2s}}{(x+1)^{3s-1}}.$$
Notice that $2s>3s-1$ and consequently $h_p$ is increasing for $x >1$, as a product of two positive increasing functions. 
Moreover $h_p(9)> \exp(2s/3) (3s-1)$ hence $h_p(k) \geq \exp(2s/3) (3s-1)$ for all $k \geq 9$. Therefore we have proved that \eqref{primesum2}, and consequently \eqref{primesum}, hold for all $k \geq 9$. Using \texttt{Matlab} it is easy to verify that in fact \eqref{primesum} holds for the remaining $k=2,\dots,8$, hence Corollary \ref{keycoro2} implies that $\cf_{{prime}}$ has full spectrum.
\end{proof}

\subsection{Squares.} In this section we will prove Theorem \ref{squares} which we now restate.
\begin{thm}The system $\cf_{square}$ has full dimension spectrum.
\end{thm}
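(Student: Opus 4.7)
Write $E = \{n^2\}_{n \in \N}$ for the set of squares, and set $s_0 = 0.5983$, so that $\dim_{\cH}(J_E) < s_0$ by Section~\ref{sec:dim}. Since $1 \in E$, the natural tools are Corollary~\ref{keycoro} and Proposition~\ref{keypropo}, both built from the sequences $\alpha_n(s) = (n+1)^{-2s}$ and $\beta_n(s) = (2/(n+2))^{2s}$. The plan is a three-stage bootstrap of the summation inequality
\[
S_k(s) := \sum_{n=k+1}^{\infty}(n^2+1)^{-2s} \;\geq\; \Big(\frac{2}{k^2+2}\Big)^{2s} = \beta_{k^2}(s).
\]
The crucial observation is that each ratio $\alpha_{n^2}(s)/\beta_{k^2}(s)$ lies in $(0,1)$ and is a decreasing function of $s$, so once $S_k(s^{\ast}) \geq \beta_{k^2}(s^{\ast})$ has been verified at some $s^{\ast}$, the inequality persists for every smaller parameter.

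In the first stage I would prove $S_k(s_0) \geq \beta_{k^2}(s_0)$ for every $k \geq 4$. For $k \geq 9$ this reduces, via $n^2+1 \leq 2n^2$ and the integral test, to the elementary inequality $k^{4s_0}/(k+1)^{4s_0-1} \geq 16^{s_0}(4s_0 - 1)$. The five remaining cases $k \in \{4,5,6,7,8\}$ are checked by explicit partial summation together with the sharper tail estimate coming from $x^2+1 \leq (1+k^{-2})x^2$ for $x \geq k$. Next, using $\|\phi'_{(i,j)}\|_\infty = (ij+1)^{-2}$ from \eqref{recurder}, a direct computation gives
\[
Z_2(0.58,\{1,4,9\}) \;=\; \sum_{i,j\in\{1,4,9\}}(ij+1)^{-1.16} \;<\; 1,
\]
so subadditivity of $(\log Z_n)$ forces $P_{\{1,4,9\}}(0.58) < 0$ and hence $\dim_{\cH}(J_{\{1,4,9\}}) < 0.58$. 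Proposition~\ref{keypropo} applied with $m = 3$, $t = 0.58$ and $s = \dim_{\cH}(J_E)$ then yields $[0.58,\dim_{\cH}(J_E)] \subset DS(\cf_E)$.

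By the monotonicity noted above, $S_k(s) \geq \beta_{k^2}(s)$ is automatic for every $s \leq s_0$ and $k \geq 4$. In the remaining two stages I would verify the borderline pairs $(s,k) = (0.58, 3)$ and $(s,k) = (0.52, 2)$ by the same ``partial sum plus integral tail'' technique, and I would certify
\[
Z_2(1/2,\{1,4\}) \;=\; \tfrac12 + \tfrac15 + \tfrac15 + \tfrac1{17} \;=\; 0.9588 \;<\; 1,
\]
which gives $\dim_{\cH}(J_{\{1,4\}}) < 1/2$. Proposition~\ref{keypropo} with $m = 2$, $t = 1/2$, $s = 0.58$ then produces $[1/2, 0.58] \subset DS(\cf_E)$, while Corollary~\ref{keycoro} applied at $s = 0.52$ produces $[0, 0.52] \subset DS(\cf_E)$. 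Since $0.52 > 1/2$, the union of the three intervals is exactly $[0, \dim_{\cH}(J_E)]$, which is the desired full-spectrum conclusion.

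The main technical obstacle is the third-stage inequality $S_2(0.52) \geq \beta_4(0.52)$: the two sides differ by only about two parts in a thousand (approximately $0.326$ against $0.319$), so the verification must be carried out at enough decimal precision---most conveniently via the same interval-arithmetic package already employed in Section~\ref{sec:dim}---to rule out any sign-flipping round-off error. All of the other summation inequalities, and both of the pressure bounds $Z_2(0.58,\{1,4,9\}) < 1$ and $Z_2(1/2,\{1,4\}) < 1$, enjoy comfortably larger margins.
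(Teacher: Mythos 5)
Your proof is correct and rests on exactly the same pillars as the paper's argument: Proposition~\ref{keypropo}, the monotonicity content of Proposition~\ref{compsums} (and hence Corollary~\ref{keycoro}), and a bootstrap that hands off at decreasing threshold parameters. What differs is where the pressure bounds $P_{I(m)}(t)\le 0$ come from. The paper runs the Falk--Nussbaum code (with \texttt{IntLab} verification) to pin down $\dim_{\cH}(J_{\{1,4\}})\in[0.411181,0.411184]$ and then finishes in two stages, $[0.411184,\dim_{\cH}(J_E)]\cup[0,0.411184]$. You replace that numerical black box with the two completely elementary bounds $Z_2(0.58,\{1,4,9\})\approx 0.969<1$ and $Z_2(1/2,\{1,4\})=1/2+2/5+1/17<1$, and pay for the coarser threshold values $0.58$ and $1/2$ with a third bootstrap stage. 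This is a real trade: the argument becomes entirely self-contained and hand-checkable (no external algorithm is needed for the base-case pressures), at the cost of one extra stage and one extra borderline summation inequality. It is also worth noting that you correctly take $s_0=0.5983$ consistent with Table~\ref{table: approach 2}; the printed proof in the paper writes the upper bound as $0.5766$, which must be a typo since the table gives $\dim_{\cH}(J_{E_{square}})\approx 0.59826$.

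One small factual slip: you identify the stage-3 pair $(s,k)=(0.52,2)$ as the tightest verification, with $S_2(0.52)\approx 0.326$ against $\beta_4(0.52)\approx 0.319$. A more careful partial-sum-plus-convexity estimate actually gives $S_2(0.52)\gtrsim 0.322$ against $\beta_4(0.52)\approx 0.3189$, a margin of roughly one percent, whereas the stage-2 pair $(s,k)=(0.58,3)$ is considerably tighter: $S_3(0.58)\gtrsim 0.1385$ against $\beta_9(0.58)\approx 0.1384$, a margin of only about $0.1$ percent. Both inequalities do hold, so the proof stands, but if you are going to single out the case that demands the most careful interval-arithmetic verification, it is $(0.58,3)$, not $(0.52,2)$.
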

\begin{proof}
We have that $\cf_{square}:=\cf_{E}$ where $E=\{n^2:n \in \N\}$. We intend to use Proposition \ref{keypropo} and a bootstrapping argument as in the proof of \cite[Theorem 1.4]{ChousionisV_LeykekhmanD_UrbanskiM_2018a}. Let $s>1/4$. We will first investigate for which $k \in \N$
\begin{equation}
\label{squaresum}\sum_{n=k+1}^\infty \left( \frac{1}{n^2+1}\right)^{2s} \geq \left( \frac{2}{k^2+2}\right)^{2s}.
\end{equation}
By the integral test,
$$\sum_{n=k+1}^\infty \left( \frac{1}{n^2+1}\right)^{2s}  \geq \int_{k+1}^\infty (x^2+1)^{-2s} dx \geq 2^{-2s} \int_{k+1}^\infty x^{-4s} dx=4^{-s}\frac{(k+1)^{1-4s}}{4s-1}.$$
Hence if $k \in \N$ satisfies
\begin{equation}
\label{square2}
\frac{(k^2+2)^{2s}}{(k+1)^{4s-1}} \geq 16^s(4s-1),
\end{equation}
then it also satisfies \eqref{squaresum}. Let
$$h_{sq}(x)=\frac{(x^2+2)^{2s}}{(x+1)^{4s-1}},$$
and notice that it is increasing for $x \geq 1$. Now recall that from Section \ref{sec:dim} that $\dim_{\cH}(J_{E}) < 0.5766:=s_0$. By substitution it follows that \eqref{square2} holds for $k=8$ and $s=s_0$, and since $h_{sq}$ is increasing \eqref{square2} holds for all $k \geq 8$ and $s=s_0$. Therefore \eqref{squaresum}  holds for all $k \geq 8$ and $s=s_0$. Using \texttt{Matlab} we verify that \eqref{oddsum} holds for all $k \geq 3$ and $s=s_0$. 

We now consider the finite conformal system $\cS_{I(2)}$, where in this case $I(2)=\{1,4\}$ consists of the first two members of the alphabet $E$. Using the code of Falk-Nussbaum from \cite{FalkRS_NussbaumRD_2016a}, and verifying our estimate using \texttt{IntLab} as in Section \ref{sec:dim} we deduce that
$$\dim_{\cH}(J_{I(2)}) \in [0.411181, 0.411184].$$
Hence $P_{I(2)}(0.411184)<0$ and since \eqref{oddsum} holds for all $k \geq 3$ and $s=s_0$, Proposition \ref{keypropo} implies that
\begin{equation}
\label{square1est}
[0.411184, \dim_{\cH}(J_{E})] \subset DS(\cf_{square}).
\end{equation}

Now let $s_1=0.411184$. Working as previously and using the monotonicity of $h_{sq}$ we deduce that \eqref{square2} holds for all $k \geq 2$ and $s=s_1$. Hence \eqref{squaresum} holds for all $k \geq 2$ and $s=s_1$, so Corollary \ref{keycoro} implies that
\begin{equation}
\label{square2est}
[0, 0.411184] \subset DS(\cf_{square}).
\end{equation}
Combining \eqref{square1est} and \eqref{square2est} we deduce that $\cf_{square}$ has full spectrum. 
\end{proof}
\subsection{Powers.}
In this section we investigate the dimension spectrum of continued fractions whose entries are scaled powers. In particular we will study systems $\cf_{E}$ where $E$ is of the form $\{r^{-1}\lambda^n: n \in \N\}$ for $\lambda,r \in \N, \lambda \geq 2,$ and $r|\lambda$. Our first theorem asserts that the dimension spectrum of these systems always contains a non-trivial interval. Notice that Theorem \ref{powerintro} is an immediate corollary of the following theorem.
\begin{thm}
\label{powerint}
Let $\lambda,r \in \N, \lambda \geq 2,$ such that $r|\lambda$. Let $E_{r,\lambda^\ast}=\{r^{-1}\lambda^n: n \in \N\}$. Then there exists some $s(r,\lambda)>0$ such that
$$[0, \min\{s(r,\lambda), \dim_{\cH}(J_{E_{r,\lambda^\ast}})\}] \subset DS(\cf_{E_{r,\lambda^\ast}}).$$
\end{thm}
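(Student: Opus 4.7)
The strategy is to apply Corollary \ref{keycoro2} to $E := E_{r,\lambda^\ast}$. Writing $e_n = \lambda^n/r$, the fact that $|\phi_e'(x)| = (e+x)^{-2}$ is maximized at $x=0$ gives $\|\phi_{e_n}'\|_\infty = e_n^{-2} = r^2\lambda^{-2n}$. Set $K := K_E$; this is a finite constant (at most $4$ when $1\in E$, and controlled by Proposition \ref{distco} otherwise). The hypothesis of Corollary \ref{keycoro2} for a given $s \geq 0$ then reads
\[
r^{2s}\sum_{n=k+1}^\infty \lambda^{-2ns} \;\geq\; K^{2s} r^{2s}\lambda^{-2ks}\qquad\text{for all }k\geq 2.
\]

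The key observation is that, thanks to the geometric structure of the alphabet, the left-hand side is an explicit geometric series and the factor $r^{2s}$ cancels. The inequality becomes
\[
\frac{\lambda^{-2(k+1)s}}{1-\lambda^{-2s}} \;\geq\; K^{2s}\lambda^{-2ks},
\]
and after multiplying through by $\lambda^{2ks}$ this reduces to the $k$\emph{-independent} condition
\[
(K\lambda)^{2s} \;\leq\; 1+K^{2s}.
\]
So the verification collapses from a $k$-indexed family of estimates to a single inequality in $s$.

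It remains to exhibit some $s(r,\lambda)>0$ for which this single inequality holds. Define $F(s) := 1 + K^{2s} - (K\lambda)^{2s}$. Then $F$ is continuous in $s$ and $F(0) = 2 - 1 = 1 > 0$, so there exists a largest $s(r,\lambda)>0$ such that $F(s)\geq 0$ for every $s\in [0,s(r,\lambda)]$; note $s(r,\lambda)$ is finite because $K\lambda > K$ (as $\lambda\geq 2$) forces $F(s)\to-\infty$ as $s\to+\infty$. With this choice of $s(r,\lambda)$, the hypothesis of Corollary \ref{keycoro2} is satisfied, and the conclusion of the corollary gives
\[
[0,\,\min\{s(r,\lambda),\,\dim_{\cH}(J_{E_{r,\lambda^\ast}})\}] \subset DS(\cf_{E_{r,\lambda^\ast}}),
\]
as desired.

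There is no substantial obstacle in this argument: the lacunary (geometric) structure of the alphabet makes all relevant series sum in closed form, the $k$-dependence drops out automatically, and the existence of a positive $s(r,\lambda)$ is a soft continuity statement at $s=0$. The one thing to remark is that the argument does not give a useful lower bound on $s(r,\lambda)$, and in particular does not rule out $s(r,\lambda) < \dim_{\cH}(J_{E_{r,\lambda^\ast}})$; this is consistent with Theorem \ref{50100intro}, which shows that for scaled powers the spectrum need not be full.
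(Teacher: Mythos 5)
Your proof is correct and follows essentially the same strategy as the paper: apply Corollary~\ref{keycoro2}, observe that the lacunary structure of $E$ makes the required inequality $k$-independent, and deduce the existence of $s(r,\lambda)>0$ from positivity of the resulting function at $s=0$ together with its eventual negativity. The only difference is cosmetic: you sum the geometric series exactly, arriving at the condition $1+K^{2s}\ge(K\lambda)^{2s}$, whereas the paper lower-bounds the tail by an integral and obtains $\tfrac{1}{2s\ln\lambda}\ge(K\lambda)^{2s}$; your condition is weaker (hence yields an $s(r,\lambda)$ at least as large), but the structure of the argument is identical.
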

\begin{proof}
Let 
\begin{equation*} 
K:=K_{E_{\lambda^\ast}}=\begin{cases}\exp\left(\frac{2}{(\lambda/r)^2-1}\right),&\mbox{ if }r<\lambda,\\
4,&\mbox{ if }r=\lambda.
\end{cases}
\end{equation*}
 We aim to use Corollary \ref{keycoro2}, so we would like to investigate for which $k \geq 2$ and $s>0$,
\begin{equation*}
\sum_{n=k+1}^\infty \|\f'_{e_n}\|^s_\infty \geq K^{2s}\,\|\f'_{e_k}\|^{s},
\end{equation*}
or equivalently 
\begin{equation}
\label{powersum}
\sum_{n=k+1}^\infty \lambda^{-2ns} \geq K^{2s}\,\lambda^{-2ks}.
\end{equation}
We have
$$\sum_{n=k+1}^\infty \lambda^{-2ns} \geq \int_{k+1}^\infty \lambda^{-2s x} dx= \frac{\lambda^{-2(k+1)s}}{2 s\, \ln \lambda}.$$
Therefore if 
\begin{equation}
\label{power2}
\frac{\lambda^{-2(k+1)s}}{2 s\, \ln \lambda} \geq K^{2s} \lambda^{-2s k},
\end{equation}
holds for some $k$ and $s$, then \eqref{powersum} holds as well. Observe though, that \eqref{power2} is equivalent to
\begin{equation}
\label{power3}\frac{1}{2 s \ln \lambda} -(\lambda K)^{2s} \geq 0,\end{equation}
which does not depend on $k$. Let 
\begin{equation}
\label{rootpower}
f(s)=\frac{1}{2 s \ln \lambda} -(\lambda K)^{2s}.
\end{equation}
It follows that $f$ is decreasing in $(0, +\infty)$, $\lim_{s \ra 0} f(x)=+\infty$ and $\lim_{ s \ra +\infty}=-\infty$. Hence there exists a unique $s(\lambda)>0$ such that $f(s(\lambda))=0$ and $f(s)>0$ for all $s \in (0,s(\lambda))$. In other words \eqref{power3} holds for all $s \in (0, s(\lambda)]$, and consequently \eqref{powersum} holds for $k \geq 2$ and $s(\lambda)$. Hence by Corollary \ref{keycoro} $$[0, \min\{s(\lambda), \dim_{\cH}(J_{E_{\lambda^\ast}})\}] \subset DS(\cf_{E_{\lambda^\ast}}).$$ The proof is complete.
\end{proof}
It is easy to find numerically the roots of the function $f$, defined in \eqref{rootpower}, for given values of $\lambda$ and $r$. In the following corollary we collect the intervals contained in the spectrum of the systems $\cf_{E_{power\lambda }}$ for $\lambda=2,3,4,5$.
\begin{corollary} Let $E_{power \lambda}=\{\lambda^n\}_{n \in \N}$. Then
\begin{align*}
[0,0.3102] &\subset  DS(\cf_{power2}),\\
[0,0.2389] &\subset  DS(\cf_{power3}),\\
[0,0.1977]&\subset  DS(\cf_{power4}),\\
[0,0.1729]&\subset  DS(\cf_{power5}).\\
\end{align*}
\end{corollary}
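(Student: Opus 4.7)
My plan is to specialize Theorem \ref{powerint} to $r = 1$ and $\lambda \in \{2, 3, 4, 5\}$, so that $E_{1,\lambda^*} = E_{power\lambda}$. Proposition \ref{distco} gives the distortion bound $K := K_{E_{power\lambda}} \leq \exp(2/(\lambda^2-1))$, and the exponent $s(\lambda)$ produced by Theorem \ref{powerint} is the unique positive root of the strictly decreasing function
$$f_\lambda(s) = \frac{1}{2s \ln \lambda} - (\lambda K)^{2s}.$$

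To conclude that $[0, s_0(\lambda)] \subset DS(\cf_{E_{power\lambda}})$ for each of the listed endpoints $s_0(\lambda) \in \{0.3102, 0.2389, 0.1977, 0.1729\}$, I need to verify two inequalities. First, $f_\lambda(s_0(\lambda)) \geq 0$, which by monotonicity of $f_\lambda$ forces $s_0(\lambda) \leq s(\lambda)$. Second, $s_0(\lambda) \leq \dim_{\cH}(J_{E_{power\lambda}})$, so that the minimum appearing in the conclusion of Theorem \ref{powerint} is $s(\lambda)$ rather than the Hausdorff dimension. The first inequality is a direct substitution, which I would carry out rigorously via interval arithmetic (\texttt{IntLab}) as in Section \ref{sec:dim}. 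The second follows immediately from the rigorous dimension estimates $0.47207\ldots$ and $0.31052\ldots$ recorded in Table \ref{table: approach 1} for $\lambda = 2, 3$, and from an analogous Falk--Nussbaum computation for $\lambda = 4, 5$.

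The main obstacle, insofar as there is one, is purely computational: ensuring the floating-point verifications of $f_\lambda(s_0(\lambda)) \geq 0$ and of the dimension lower bounds for $\lambda = 4, 5$ are performed rigorously. Since all the structural work is already furnished by Theorem \ref{powerint}, and since the values of $f_\lambda$ at the proposed endpoints are comfortably bounded away from zero at the claimed precision, the corollary amounts to tabulating four approximate root calculations together with four dimension lower bounds, each handled by the same verified arithmetic pipeline employed throughout the paper.
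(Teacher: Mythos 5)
Your proposal is correct and follows essentially the same route as the paper: specialize Theorem \ref{powerint} with $r=1$, verify that $f_\lambda(s_0)\geq 0$ (hence $s_0\leq s(\lambda)$), and confirm $s_0\leq\dim_{\cH}(J_{E_{power\lambda}})$ so that the $\min$ in the theorem's conclusion is not binding. Your explicit acknowledgment that the corollary silently assumes $s_0\leq\dim_{\cH}(J_{E_{power\lambda}})$ — which is a genuine hypothesis that must be checked, and which for $\lambda=4,5$ requires a Falk--Nussbaum dimension estimate not recorded in Table \ref{table: approach 1} — makes the argument slightly more careful than the paper's one-line remark that one simply finds the roots of $f$ from \eqref{rootpower} numerically.
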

The following lemma follows easily from Theorem \ref{kessenode}. We provide the details for completeness.
\begin{lm} Let $\lambda,r \in \N, \lambda \geq 2,$ such that $r|\lambda$. Let $E_{r,\lambda^\ast}=\{r^{-1}\lambda^n: n \in \N\}$. If there exist $0<s<r \leq \dim_{\cH}(J_{E_{r,\lambda^\ast}})$ such that
\begin{equation}
\label{powernowden}
 \frac{1}{2 t \ln \lambda}-K_{E_{r,\lambda^\ast}}^{-2t}<0\mbox{ for all } t\in (s,r),
\end{equation}
then $DS(\cS) \cap [s,r]$ is nowhere dense.
\end{lm}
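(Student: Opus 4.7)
The plan is to verify the two hypotheses of Theorem \ref{kessenode} for the system $\cS = \cf_{E_{r,\lambda^\ast}}$ and then invoke it directly. The first hypothesis, absolute regularity, is immediate: since $e_n = r^{-1}\lambda^n$ grows geometrically, one checks that $Z_1(t) = r^{2t}\sum_{n=1}^\infty \lambda^{-2nt}$ converges for every $t>0$, so $\theta_\cS = 0$, and $P(0) = +\infty$ because the alphabet is infinite. This is precisely what absolute regularity requires.

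The main work is to reduce the sum condition \eqref{sumndcond} to the assumed inequality \eqref{powernowden}. Recalling from the proof of Proposition \ref{lyapbound} that $\|\f'_{e_n}\|_\infty = e_n^{-2}$, we have $\|\f'_{e_n}\|_\infty^t = r^{2t}\lambda^{-2nt}$. Substituting into \eqref{sumndcond} and canceling the common factor $r^{2t}$ leaves the cleaner inequality
$$\sum_{n=k+1}^\infty \lambda^{-2nt} < K^{-2t}\,\lambda^{-2kt},$$
where $K := K_{E_{r,\lambda^\ast}}$ is the distortion constant recorded in the proof of Theorem \ref{powerint}.

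Next I would bound the left-hand side using the integral test applied to the decreasing function $x \mapsto \lambda^{-2tx}$:
$$\sum_{n=k+1}^\infty \lambda^{-2nt} \leq \int_k^\infty \lambda^{-2tx}\,dx = \frac{\lambda^{-2kt}}{2t\ln \lambda}.$$
After dividing both sides by $\lambda^{-2kt}$, the desired sum inequality reduces to $\frac{1}{2t\ln \lambda} < K^{-2t}$, which is exactly hypothesis \eqref{powernowden}. Crucially, this reduction is uniform in $k$ (the $k$-dependence drops out entirely), so a single scalar inequality in $t$ suffices for all $k \in \N$ simultaneously. With the sum condition verified for every $k \in \N$ and every $t \in (s,r)$, Theorem \ref{kessenode} immediately yields that $DS(\cS)\cap [s,r]$ is nowhere dense.

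There is essentially no obstacle: the key simplification is the geometric structure of $E_{r,\lambda^\ast}$, which makes the sum in \eqref{sumndcond} a pure geometric series and causes the $k$-dependence to disappear. The only point deserving care is making sure the integral upper bound is correctly aligned (starting the integral at $k$, not $k+1$), so that the resulting condition matches \eqref{powernowden} exactly rather than being a strictly stronger variant.
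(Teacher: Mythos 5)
Your proposal is correct and follows essentially the same route as the paper: observe that $\|\f'_{e_n}\|_\infty = e_n^{-2}$ reduces condition \eqref{sumndcond} to a geometric series in $\lambda$ (after the $r^{2t}$ factor cancels), bound it by the integral $\int_k^\infty \lambda^{-2tx}\,dx = \lambda^{-2kt}/(2t\ln\lambda)$ so that the $k$-dependence drops out, and conclude via Theorem \ref{kessenode}. The only addition is your explicit check of absolute regularity, which the paper leaves implicit.
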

\begin{proof} By Theorem \ref{kessenode}  it suffices to prove that 
\begin{equation}
\label{powersumnd}
\sum_{n=k+1}^\infty \lambda^{-2nt} < K_{E_{r,\lambda^\ast}}^{-2t}\,\lambda^{-2kt} \mbox{ for all }t \in (r,s).
\end{equation} 
For such $t$ we have
$$\sum_{n=k+1}^\infty \lambda^{-2nt} \leq \int_{k}^\infty \lambda^{-2t x} dx= \frac{\lambda^{-2kt}}{2 t\, \ln \lambda}.$$
Hence \eqref{powersumnd} holds if
$$\frac{1}{2 t \ln \lambda}-K_{E_{r,\lambda^\ast}}^{-2t}<0.$$
The proof is complete.
\end{proof}
We will now prove Theorem \ref{50100intro} which we restate in a more precise way below.
\begin{thm} Let $E_{50,100^{\ast}}=\{2,200,20000,\dots\}$. There exist $s_1,s_2$ such that $0<s_1<s_2<h:=\dim_{\cH}(J_{E_{50,100^{\ast}}})$ and
\begin{enumerate}[label=(\roman*)]
\item \label{inter} $[0,s_1] \subset DS(\cf_{E_{{50,100^{\ast}}}})$,
\item \label{nowden} $[s_2, h] \cap DS(\cf_{E_{{50,100^{\ast}}}})$ is nowhere dense.
\end{enumerate}
\end{thm}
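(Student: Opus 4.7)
The plan is to obtain the two conclusions as direct applications of Theorem \ref{powerint} and of the nowhere--density Lemma immediately preceding the theorem, both specialized to $r=50$ and $\lambda=100$; note that $r\mid\lambda$ and $E_{50,100^{\ast}}=\{50^{-1}\cdot 100^n:n\in\N\}$, so the hypotheses of those statements are satisfied. All the real content will lie in certifying the ordering $0<s_1<s_2<h$.

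For part \ref{inter} I would apply Theorem \ref{powerint} directly. Since $\min E_{50,100^{\ast}}=2$, Proposition \ref{prop: distortion} yields $K_{E_{50,100^{\ast}}}\le e^{2/3}$, so the theorem produces a positive threshold $s(50,100)$ defined as the unique positive root of
$$f(s)=\frac{1}{2s\ln 100}-\bigl(100\,e^{2/3}\bigr)^{2s},$$
and it guarantees $[0,\min\{s(50,100),h\}]\subset DS(\cf_{E_{50,100^{\ast}}})$. A quick numerical bisection places $s(50,100)$ in the vicinity of $0.057$, so I would take $s_1$ to be any fixed positive value below $s(50,100)$ and below the rigorous lower bound on $h$ produced in the last step.

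For part \ref{nowden} I would invoke the nowhere--density Lemma, whose hypothesis $\frac{1}{2t\ln 100}<K_{E_{50,100^{\ast}}}^{-2t}$ on $(s_2,h)$ is, after substituting the upper bound $K_{E_{50,100^{\ast}}}\le e^{2/3}$, implied by the $K$--free inequality $e^{4t/3}<2t\ln 100$. The function $g(t)=2t\ln 100-e^{4t/3}$ vanishes at some $t_{*}$ near $0.13$ and is strictly positive throughout a long interval to its right. Choosing any $s_2\in(t_{*},h)$ then yields via the Lemma that $[s_2,h]\cap DS(\cf_{E_{50,100^{\ast}}})$ is nowhere dense.

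The main obstacle will be the rigorous certification that $h>s_2$, that is, a rigorous \emph{lower} bound on $h=\dim_{\cH}(J_{E_{50,100^{\ast}}})$ strictly exceeding $t_{*}\approx 0.13$. This I would produce by the Falk--Nussbaum strategy of Section \ref{sec:dim}, verified with \texttt{IntLab}, in the same spirit as the rows $E_{\mathrm{power}2}$ and $E_{\mathrm{power}3}$ of Tables \ref{table: approach 1}--\ref{table: approach 2}. Because the tail correction \eqref{eq: correction} decays at the geometric rate $100^{-2t}$ in the truncation $M$, a very small $M$ together with a mesh of a few thousand nodes is more than enough to trap $h$ (numerically $\approx 0.17$) inside a tight two--sided enclosure lying comfortably above $s_2$. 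Once this enclosure is in hand, the chain $0<s_1<s_2<h$ is immediate and both conclusions follow.
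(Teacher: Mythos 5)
Your proposal follows exactly the same route as the paper: part (i) via Theorem \ref{powerint} (solving the root equation \eqref{power3} with $\lambda=100$, $K\le e^{2/3}$ to obtain $s_1\approx 0.058$), part (ii) via the nowhere-density lemma (reducing to the $K$-free inequality $e^{4t/3}<2t\ln 100$, whose crossing is at $s_2\approx 0.129$), and a Falk--Nussbaum/\texttt{IntLab} enclosure of $h$ to certify $s_2<h$. Your heuristic $h\approx 0.17$ is a bit high (the paper's rigorous enclosure is $h\in[0.160397,0.160398]$), but this does not affect the argument since both comfortably exceed $s_2$.
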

\begin{proof} Arguing as in Section \ref{sec:dim} (i.e. using numerical approximation based on the method of Falk-Nussbaum and estimating the error using Theorem \ref{hein}) we obtain that
$$h=\dim_{\cH}(J_{E_{50,100^{\ast}}}) \in [0.160397,0.160398].$$
The existence of $s_1$ follows by Theorem \ref{powerint}, and solving the equation \eqref{power3} numerically we can choose $s_1 = 0.058557$. 

We will use Lemma \ref{powernowden} in order to determine $s_2$. In the case of $E_{50,100^{\ast}}$, Proposition \ref{distco} implies that $K:=K_{E_{50,100^{\ast}}}=\exp(2/3)$. Let 
$$f(t)=\frac{1}{2 t \ln 100}-\exp(-4t/3), \quad t >0.$$
Since $f'(t)=-\frac{1}{ t^2 2 \ln 100}+\frac{4}{3}\exp (-4t/3)$ and, 
$\frac{4}{3}\exp (-4t/3) < 4/3$ for $t >0$,
we deduce that $f$ is decreasing in $(0, s^{\ast})$ where
$$s^{\ast}=\sqrt{\frac{3}{8 \ln100}}\approx 0.285.$$
Since $f(s^{\ast})<0$ and $\lim_{t \ra^+ 0}f(t)=+\infty$ we deduce that there exists a unique $s_2<s^*$ such that $f(s_2)=0$.
Solving the equation $f(t)=0$ numerically in $(0,s^\ast)$ we derive that $s_2\approx 0.12894$. Hence we have shown that  $[0, 0.058557] \subset DS(\cf_{E_{50,100^{\ast}}})$  and $[0.128941, h] \cap DS(\cf_{E_{50,100^{\ast}}})$ is nowhere dense. The proof is complete. 
\end{proof}
\subsection{Lacunary sequences.} We finally consider continued fractions whose entries form a lacunary sequence. Indicatively we consider the lacunary sequence $E_{3lac}=\{3^{n^2}\}_{n \in \N}$ and we prove that the corresponding continued fractions system contains a non-trivial nowhere dense part. The same scheme works for several other lacunary sequences as well.
\begin{thm}
\label{lacspec} Let $E_{3lac}=\{3^{n^2}\}_{n \in \N}$. There exists some $t_0<\dim_{\cH}(J_{E_{3lac}})$ such that $DS(\cf_{3lac}) \cap [t_0,\dim_{\cH}(J_{E_{3lac}})]$ is nowhere dense.
\end{thm}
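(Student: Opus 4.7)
The plan is to apply Theorem \ref{kessenode} with endpoints $t_0 < r = h := \dim_{\cH}(J_{E_{3lac}})$, verifying the sum inequality \eqref{sumndcond} on the whole interval $(t_0, h)$. First, the system $\cf_{E_{3lac}}$ meets the regularity hypothesis: since $Z_1(t) = \sum_{n \geq 1} 3^{-2n^2 t}$ is finite for every $t > 0$ while $P(0) = +\infty$ by Proposition \ref{p2j85}\ref{p0infty}, the pressure takes finite positive values on a nondegenerate interval. Since $\min E_{3lac} = 3$, Proposition \ref{distco} gives $K := K_{E_{3lac}} \leq \exp(1/4)$, and $\|\f'_{e_n}\|_\infty = 3^{-2n^2}$.

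With these derivatives, after the change of variable $m = n-k$ the hypothesis of Theorem \ref{kessenode} becomes
\begin{equation*}
\sum_{m=1}^\infty 3^{-2m(m+2k)t} < K^{-2t}, \qquad k \geq 1,\ t \in (t_0, h).
\end{equation*}
Since $m(m+2k)$ is increasing in $k$ for each fixed $m \geq 1$, the left-hand side is termwise decreasing in $k$, so it suffices to verify the $k=1$ case. Rewrite it as
\begin{equation*}
G(t) := \sum_{m=1}^\infty \bigl(3^{-m(m+2)} K\bigr)^{2t} < 1.
\end{equation*}
Every base $3^{-m(m+2)} K \leq 3^{-3} e^{1/4} < 1$ lies strictly below one, so each term is a strictly decreasing function of $t > 0$, and hence so is $G$. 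In particular, if $G(t_0) < 1$ at a single $t_0$, then $G(t) < 1$ for all $t \geq t_0$, so the sum inequality automatically holds on $[t_0, h]$.

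It remains to exhibit $t_0 < h$ at which $G(t_0) < 1$. I take $t_0 = 0.1$: a direct estimation gives
\begin{equation*}
\sum_{m=1}^\infty 3^{-0.2\,m(m+2)} = 3^{-0.6} + 3^{-1.6} + 3^{-3} + 3^{-4.8} + \cdots < 0.74,
\end{equation*}
while $K^{-0.2} \geq e^{-0.05} > 0.95$, so indeed $G(0.1) < 1$. To verify $h > 0.1$, I plan to apply the Falk-Nussbaum method of Section \ref{sec:dim} (with \texttt{IntLab} verification) to the two-element finite subsystem $\{3, 81\} \subset E_{3lac}$; the Bowen parameter for this subsystem sits well above $0.1$, and by Theorem \ref{t1j97}, $h \geq \dim_{\cH}(J_{\{3, 81\}}) > t_0$. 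Theorem \ref{kessenode} then concludes that $DS(\cf_{3lac}) \cap [t_0, h]$ is nowhere dense.

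The main technical obstacle is the certified lower bound $h > 0.1$, which requires the Falk-Nussbaum interval-arithmetic routine on $\{3, 81\}$; the heuristic Bowen root sits near $0.145$, so there is ample margin, but the computation must be made rigorous. Conceptually the essential simplifications are the reduction to $k=1$ via the super-geometric growth of $e_n = 3^{n^2}$, and the monotonicity of $G$ in $t$, which together reduce a continuous verification on $[t_0, h]$ to a single arithmetic check at $t_0$.
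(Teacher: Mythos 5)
Your proof is correct, and it takes a genuinely cleaner route than the paper. The paper bounds the tail $\sum_{n>k}\|\f'_{e_n}\|_\infty^t$ by the Gaussian integral $\int_k^\infty e^{-\alpha_t x^2}\,dx$, applies integration by parts to reduce the hypothesis of Theorem \ref{kessenode} to a condition of the form $f(t) = \exp(t/2)/(4t\ln 3) < k$, observes this holds for $k\ge 2$ on the relevant range, and then has to run a \emph{separate} monotonicity argument (for a quantity $I(t)$ involving the Gaussian integral) to handle $k=1$. You bypass all of this. Your first observation --- after normalizing by $\|\f'_{e_k}\|_\infty^t$ and reindexing $m=n-k$, the left side $\sum_{m\ge 1} 3^{-2m(m+2k)t}$ is termwise decreasing in $k$ --- collapses the ``for all $k$'' verification to the single case $k=1$. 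Your second observation --- that $G(t)=\sum_m (3^{-m(m+2)}K)^{2t}$ is a sum of strictly decreasing functions of $t$ since every base lies in $(0,1)$ --- reduces the verification on $(t_0,h)$ to a single arithmetic check at $t_0$. The paper's approach would generalize better to alphabets that are not super-geometrically sparse (where an integral comparison is genuinely needed), but for $\{3^{n^2}\}$ your argument is shorter, avoids the $k=1$ exception, and even produces a larger nowhere-dense window ($t_0=0.1$ vs.\ the paper's $\approx 0.142$). The only item you flag as pending --- a certified bound $h>0.1$ --- is indeed needed, but the paper's own rigorous computation $h\in[0.15565551,0.15565552]$ (or your proposed Falk--Nussbaum computation on $\{3,81\}$ via Theorem \ref{t1j97}) settles it with ample margin.
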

\begin{proof} Arguing as in Section \ref{sec:dim} (i.e. using numerical approximation based on the method of Falk-Nussbaum and estimating the error using Theorem \ref{hein}) we obtain that
$$h=\dim_{\cH}(J_{E_{3lac}}) \in [0.15565551, 0.15565552].$$
Let $t \in (0,h]$ and $k \in \N$. We have
$$\sum_{n=k+1}^\infty \|\f'_{e_n}\|_{\infty}^{t} \leq \int_{k}^\infty (3^{2t})^{-x^2}dx=\int_{k}^\infty e^{-\alpha_t x^2} dx$$
where $\alpha_t=2t \ln3$. Integrating by parts we get
\begin{equation*}
\begin{split}
\int_{k}^\infty e^{-\alpha_t x^2} dx&=\frac{1}{2 \alpha_t} \int_{k}^\infty \frac{1}{x} (-e^{-\alpha_t x^2})'dx \\
&=\frac{e^{-\alpha_t k^2}}{2 \alpha_t k}-\frac{1}{2 \alpha_t} \int_{k}^\infty \frac{e^{-\alpha_t x^2}}{x^2}dx 
\leq \frac{e^{-\alpha_t k^2}}{2 \alpha_t k}=\frac{3^{-2tk^2}}{4 t k\ln3 }.
\end{split}
\end{equation*}
In the case of $E_{3lac}$, Proposition \ref{distco} implies that $K_{3lac}:=K=\exp(1/4)$. Note that
$$\frac{3^{-2tk^2}}{4 t k\ln3 } < K^{-2t} \|\f'_{e_k}\|^{t}_{\infty}=K^{-2t} 3^{-2tk^2}$$
is equivalent to $\frac{\exp(t/2)}{4 t \ln3} <k.$ Let 
$$f(t)=\frac{\exp(t/2)}{4 t \ln3}, \quad t>0.$$
Since $f$ is decreasing for $t \in (0,2)$ and for $t_0=1.42$ we have that $f(t_0)<1.8$ we deduce that $f(t)<k$ for all $t \in [t_0, 2)$ and $k \geq 2$. Therefore
\begin{equation}
\label{sumgeq2}
\sum_{n=k+1}^\infty \|\f'_{e_n}\|_{\infty}^{t}<K^{-2t} \|\f'_{e_k}\|^{t}_{\infty}, \quad \mbox{ for all }k \geq 2, t\in [t_0, 2).
\end{equation}
Now let 
$$I(t)=\int_{1}^\infty e^{-\alpha_t x^2} dx-(3\exp(1/4))^{-2t}.$$
Taking derivatives we get that
$$I'(t)=-2 \ln 3 \int_1^\infty x^2 e^{-2 t \ln3 \, x^2}dx+2 \ln(3 \exp(1/4))3^{-2t}.$$
Note that in the interval $t \in [t_0, h],$
$$I'(t) \leq 2(\ln(3 \exp(1/4))3^{-2t_0}-\ln 3 \int_1^\infty x^2 e^{-2 h \ln3 \, x^2}dx):=E.$$
The quantity $E$ is easily computable and it turns out that $E\approx-2.27867$, hence $I'(t) <0$ for $t \in [t_0, h]$. Since $I(t_0)<0$ (computed using \texttt{Matlab}) we deduce that
\begin{equation}
\label{gaus1}
I(t) <0 \mbox{ for all }t \in [t_0,h].
\end{equation}
Combining \eqref{sumgeq2} and \eqref{gaus1} we obtain that
\begin{equation}
\label{sumgeq3}
\sum_{n=k+1}^\infty \|\f'_{e_n}\|_{\infty}^{t}<K^{-2t} \|\f'_{e_k}\|^{t}_{\infty}, \quad \mbox{ for all }k \geq 1, t\in [t_0, h].
\end{equation}
Therefore Theorem \ref{kessenode} implies that $[t_0,h] \cap DS(\cf_{E_{3lac}})$ is nowhere dense. The proof is complete.
\end{proof}
\bibliography{IFS_1D}
\bibliographystyle{siam}

\end{document}